\newcommand{\jvyb}[1]{\textcolor{black}{#1}}
\newtheorem{theorem}{Theorem}
\newtheorem{corollary}{Corollary}
\newtheorem{definition}{Definition}
\newtheorem{lemma}{Lemma}
\newtheorem{proposition}{Proposition}
\newtheorem{remark}{Remark}
\numberwithin{equation}{section}
\newcommand{\calA}{\ensuremath{\mathcal{A}}}
\newcommand{\calB}{\ensuremath{\mathcal{B}}}
\newcommand{\calH}{\ensuremath{\mathcal{H}}}
\newcommand{\calG}{\ensuremath{\mathcal{G}}}
\newcommand{\calP}{\ensuremath{\mathcal{P}}}
\newcommand{\calS}{\ensuremath{\mathcal{S}}}
\newcommand{\calN}{\ensuremath{\mathcal{N}}}
\newcommand{\norm}[1]{\|{#1}\|}
\newcommand{\abs}[1]{|{#1}|}
\newcommand{\set}[1]{\left\{{#1}\right\}}
\newcommand{\dotprod}[2]{\langle#1,#2\rangle}
\newcommand{\est}[1]{\widehat{#1}}
\newcommand{\expec}{\ensuremath{\mathbb{E}}}
\newcommand{\matR}{\ensuremath{\mathbb{R}}}
\newcommand{\matZ}{\ensuremath{\mathbb{Z}}}
\newcommand{\matN}{\ensuremath{\mathbb{N}}}
\newcommand{\prob}{\ensuremath{\mathbb{P}}}
\newcommand{\indic}{\ensuremath{\mathbbm{1}}} 
\newcommand{\vecx}{\mathbf x}
\newcommand{\vece}{\mathbf e}
\newcommand{\veci}{\mathbf i}
\newcommand{\vecj}{\mathbf j}
\newcommand{\vecu}{\mathbf u}
\newcommand{\vecy}{\mathbf y}
\newcommand{\vecz}{\mathbf z}
\newcommand{\veca}{\mathbf a}
\newcommand{\matA}{\ensuremath{\mathbf{A}}}
\newcommand{\matB}{\ensuremath{\mathbf{B}}}
\newcommand{\ftil}{\ensuremath{\tilde{f}}}
\newcommand{\sampsgnvec}{\ensuremath{\bm{\beta}}}
\newcommand{\sampsgn}{\ensuremath{\beta}}		
\newcommand{\sampsgnbar}{\ensuremath{\bar{\sampsgn}}}		
\newcommand{\betavect}{\ensuremath{\sampsgnvec^{(2)}}}
\newcommand{\betat}{\ensuremath{\sampsgn^{(2)}}}
\newcommand{\betavecp}{\ensuremath{\sampsgnvec^{(p)}}}
\newcommand{\arbnoisebd}{\ensuremath{\nu}}
\newcommand{\riptt}{\ensuremath{\delta}} 
\newcommand{\riptoub}{\ensuremath{\gamma^{\mathrm{ub}}}} 
\newcommand{\riptolb}{\ensuremath{\gamma^{\mathrm{lb}}}} 
\newcommand{\dchp}{\ensuremath{{D}}} 
\newcommand{\vech}{\mathbf h}
\newcommand{\noisebd}{\ensuremath{\triangle}} 
\newcommand{\xposvec}{\ensuremath{\vecx^{+}}}
\newcommand{\xnegvec}{\ensuremath{\vecx^{-}}}
\newcommand{\xpos}{\ensuremath{x^{+}}}
\newcommand{\xneg}{\ensuremath{x^{-}}}
\newcommand{\origsparsvec}{\ensuremath{\vecz^{*}}}
\newcommand{\origspars}{\ensuremath{z^{*}}}
\newcommand{\csalg}{\ensuremath{\texttt{SPARSE-REC}}}
\newcommand{\setproj}{\ensuremath{\Pi}}
\newcommand{\numbasepts}{\ensuremath{m}}
\newcommand{\noise}{\ensuremath{\eta}}
\newcommand{\noisevec}{\ensuremath{\bm{\eta}}}
\newcommand{\totsparsity}{\ensuremath{k}}
\newcommand{\univsupp}{\ensuremath{\calS_1}} 
\newcommand{\bivsupp}{\ensuremath{\calS_2}} 
\newcommand{\bivsuppvar}{\ensuremath{\calS_2^{\rm{var}}}} 
\newcommand{\totsupp}{\ensuremath{\calS}}
\newcommand{\order}{\ensuremath{r_0}}
\newcommand{\betavecord}{\ensuremath{\sampsgnvec^{(\order)}}}
\newcommand{\betaord}{\ensuremath{\sampsgn^{(\order)}}}
\newcommand{\hashfam}{\ensuremath{\calH}}
\newcommand{\modmean}{\ensuremath{\mu}} 
\newcommand{\dimn}{\ensuremath{d}}
\newcommand{\idenconst}{\ensuremath{D}}
\newcommand{\baseset}{\ensuremath{\chi}}
\newcommand{\twohashfam}{\ensuremath{\calH_{2}^{d}}}
\newcommand{\thashfam}{\ensuremath{\calH_{t}^{d}}}
\newcommand{\hashfn}{\ensuremath{h}}
\newcommand{\canvec}{\ensuremath{\mathbf{e}}}
\newcommand{\digit}{\ensuremath{\mathtt{digit}}}
\newcommand{\randvar}{Rademacher } 
\title{Learning \jvyb{general} sparse additive models\\ from point queries in high dimensions}
\author{Hemant Tyagi\thanks{INRIA Lille-Nord Europe, France. This work was done by the author while affiliated to the Alan Turing Institute, London, United Kingdom, 
and the School of Mathematics, University of Edinburgh, United Kingdom; this author was supported by EPSRC grant EP/N510129/1 at the Alan Turing Institute.}\\ \texttt{hemant.tyagi@inria.fr} \and 
Jan Vybiral\thanks{Dept. of Mathematics FNSPE, Czech Technical University in Prague, Trojanova 13, 12000 Prague, Czech Republic;
this author was supported by the grant P201/18/00580S of the Grant Agency of the Czech Republic,
by the grant 8X17028 of the Czech Ministry of Education, and by the Neuron Fund for Support of Science.}\\
\texttt{jan.vybiral@fjfi.cvut.cz}} 
\begin{document}
\maketitle

\begin{abstract}
We consider the problem of learning a $d$-variate function $f$ defined on the cube $[-1,1]^d\subset \matR^d$,
where the algorithm is assumed to have black box access to samples of $f$ within this domain.
Denote $\totsupp_r \subset {[d] \choose r}; r=1,\dots,\order$ 
to be sets consisting of unknown $r$-wise interactions amongst the coordinate variables. 
We then focus on the setting where $f$ has an additive structure, i.e., 
it can be represented as 
$$f = \sum_{\vecj \in \totsupp_1} \phi_{\vecj} + \sum_{\vecj \in \totsupp_2} \phi_{\vecj} + \dots 
+ \sum_{\vecj \in \totsupp_{\order}} \phi_{\vecj},$$
where each $\phi_{\vecj}$; $\vecj \in \totsupp_r$ is at most $r$-variate for $1 \leq r \leq \order$.  
We derive randomized algorithms that query $f$ at carefully constructed set of points, and exactly 
recover each $\totsupp_r$ with high probability. In contrary to the previous work, our analysis does not rely
on numerical approximation of derivatives by finite order differences.
\end{abstract}
{\bf Key words:} Sparse additive models, sampling, hash functions, sparse recovery\\\\
{\bf Mathematics Subject Classifications (2010):} 41A25, 41A63, 65D15\\

\section{Introduction} \label{sec:intro}
%
%
%

Approximating a function from its samples is a fundamental problem with rich theory developed in areas such as numerical analysis 
and statistics, and which also has numerous practical applications such as in systems biology \cite{Goel08}, solving PDEs \cite{CohenPDE2010}, 
control systems \cite{ZhuControl16}, optimization \cite{Shan2010} etc. 
Concretely, for an unknown $d$-variate function $f: \calG \rightarrow \matR$, one is given information about $f$ in the form of 
samples $(\vecx_i,f(\vecx_i))_{i=1}^n$. Here, the $\vecx_i$'s belong to a compact subset $\calG \subset \matR^d$. 
The goal is to construct a smooth estimate $\est{f} : \calG \rightarrow \matR$ such that the error between $\est{f}$ and $f$ is small. 
In this paper we focus on the high dimensional setting where $d$ is large.
We will consider the 
scenario where the algorithm has black box access to the function, and can query it at any point within $\calG$.
This setting appears for instance in materials science \cite{Luca2015}, where $\vecx$ represents some material and $f(\vecx)$
some of its properties of interest (like thermal or electric conductivity). The local-density approximations in density functional theory can be used to
compute to high accuracy such properties of a given material. The sampling then corresponds to running a costly numerical PDE-solver.
Since such simulations are typically 
expensive to run, one would like to minimize the number of queries made. This setting is different from the regression setting typically considered
in statistics wherein the $\vecx_i$'s are generated apriori from some unknown distribution over $\calG$.

\paragraph{Curse of dimensionality.} It is well known that provided we only make smoothness assumptions on 
$f$ (such as differentiability or Lipschitz continuity), then 
the problem is intractable, i.e.,  has exponential complexity (in the worst case) with respect to the dimension $d$.
For instance if $f \in C^{s}(\calG)$, then any algorithm 
needs in the worst case $n = \Omega(\delta^{-d/s})$ samples\footnote{\jvyb{This means that there exists a constant $c > 0$ such that 
$n \geq c \delta^{-d/s}$ when $d$ is sufficiently large. See Section \ref{sec:problem} for a formal definition.}} 
to uniformly approximate $f$ with error $\delta \in (0,1)$, cf. \cite{Novak2006,Vybiral2007}.
Furthermore, the constants behind the $\Omega$-notation may also depend on $d$.
A detailed study of the dependence on $d$ was performed in the field of \emph{Information Based Complexity}
for $f \in C^{\infty}(\calG)$ in a more recent work \cite{Novak2009}. The authors show that even here,
$n = \Omega(2^{\lfloor d/2 \rfloor})$ samples are needed in the worst case for uniform approximation within an error $\delta \in (0,1)$
(with no additional dependence on $d$ hidden behind the $\Omega$-notation).
This exponential dependence on $d$ is commonly referred to as the \emph{curse of dimensionality}. 
The above results suggest that in order to get tractable algorithms in the high dimensional regime, one needs to make additional assumptions 
on $f$. To this end, a growing line of work over the past decade has focused on the setting where $f$ possesses an intrinsic, albeit unknown, low dimensional structure, 
with much smaller intrinsic dimension than the ambient dimension $d$. The motivation is that one could now hope to design algorithms with complexity
at most exponential in the intrinsic dimension, but with mild dependence on $d$. 

\subsection{Sparse additive models (SPAMs)} 
A popular class of functions with an intrinsic low dimensional structure are the so-called sparse additive 
models
(SPAMs). These are functions that are decomposable as the sum of a small number of lower dimensional functions.
\jvyb{To give the formal definition, we denote $[d]=\{1,\dots,d\}$ and by ${[d]\choose r}$ we mean the collection
of all ordered $r$-tuples from $[d]$. Then,}
for $\totsupp_r \subset {[d] \choose r}; r=1,\dots,\order$, the function $f:\calG \rightarrow \matR$ is of the form
\begin{equation} \label{eq:gen_spam_eq}
f = \sum_{j \in \totsupp_1} \phi_{j} (x_j) + \sum_{(j_1,j_2) \in \totsupp_2} \phi_{(j_1,j_2)} (x_{j_1},x_{j_2}) + \dots 
+ \sum_{(j_1,\dots,j_{\order}) \in \totsupp_{\order}} \phi_{(j_1,\dots,j_{\order})} (x_{j_1},\dots,x_{j_{\order}})
\end{equation}
with each $\abs{\totsupp_r} \ll {d \choose r}$, and $\order \ll d$. We can interpret the tuples in $\totsupp_r$ as 
$r^{th}$ order interactions terms. Let us remark, that usually the terminology Sparse additive models is used for
the case $\order = 1$, but we prefer to use it here in the general sense of \eqref{eq:gen_spam_eq}.

These models appear in optimization under the name 
\emph{partially separable} models (cf., \cite{Griewank1982}). They also arise in 
electronic structure computations in physics (cf., \cite{Blochl90}), and problems involving
multiagent systems represented as decentralized partially observable Markov decision processes (cf., \cite{Dibangoye14}). 
There exists a rich line of work that mostly study special cases of the model \eqref{eq:gen_spam_eq}. We  review them briefly below, 
leaving a detailed comparison with our results to Section \ref{sec:disc_conc_rems}.

\paragraph{The case $\order = 1$.} In this setting, \eqref{eq:gen_spam_eq} reduces to a sparse sum of univariate functions. This model has 
been studied extensively in the non parametric statistics literature with a range of results on 
estimation of $f$ (cf., \cite{Huang2010,Koltch08,Koltch2010,Meier2009,Raskutti2012,Ravi2009}) 
and also on variable selection, i.e., identifying the support $\totsupp_1$ (cf., \cite{Huang2010,Ravi2009,Wahl15}). 
The basic idea behind these approaches is to approximately represent each $\phi_j$ in a suitable basis of finite size (for eg., splines or wavelets)
and then to find the coefficients in the basis expansion by solving a least squares problem with 
smoothness and sparsity penalty constraints. Koltchinskii et al. \cite{Koltch2010} and Raskutti et al. \cite{Raskutti2012} 
proposed a convex program for estimating $f$ in the Reproducing kernel 
Hilbert space (RKHS) setting, and showed that $f$ lying in a Sobolev space with 
smoothness parameter $\alpha > 1/2$ can be estimated at the $L_2$ rate $\frac{\totsparsity \log d}{n} + \totsparsity n^{-\frac{2\alpha}{2\alpha+1}}$. 
This rate was shown to be optimal in \cite{Raskutti2012}. There also exist results for variable selection, i.e., identifying the 
support $\totsupp_1$. These results in non parametric statistics are typically asymptotic in the limit of large $n$, also referred to 
as sparsistency \cite{Huang2010,Ravi2009,Wahl15}. Recently, Tyagi et al. \cite{Tyagi14_nips} derived algorithms that query $f$, along with 
non-asymptotic sampling bounds for identifying $\totsupp_1$. They essentially estimate the (sparse) gradient of $f$ using results from 
compressed sensing (CS), at few carefully chosen locations in $\calG$.

\paragraph{The case $\order = 2$.} This setup has received relatively less attention than the aforementioned setting. 
Radchenko et al. \cite{Rad2010} proposed an algorithm VANISH, and showed that it 
is sparsistent, i.e., recovers $\univsupp,\bivsupp$ in the limit of large $n$. 
The ACOSSO algorithm \cite{Storlie2011} can handle this setting, with theoretical guarantees (sparsistency, convergence rates) 
shown when $\order = 1$. Recently, Tyagi et al. \cite{Tyagi_aistats16,Tyagi_spamint_long16} derived algorithms that query $f$, 
and derived non-asymptotic sampling bounds for recovering $\univsupp,\bivsupp$. Their approach for recovering $\bivsupp$ was based on estimating the 
(sparse) Hessian of $f$ using results from CS, at carefully chosen points in $\calG$. 
The special case where $f$ is \emph{multilinear} has been studied considerably; there exist algorithms that recover $\univsupp,\bivsupp$, 
along with convergence rates for estimating $f$ in the limit of large $n$ \cite{Choi2010,Rad2010,Bien2013}. There also exist 
non-asymptotic sampling bounds for identifying $\univsupp,\bivsupp$ in the noiseless setting (cf., \cite{Nazer2010,Kekatos11}); 
these works essentially make use of the CS framework.

\paragraph{The general case.} Much less is known about the general setup where $\order \geq 2$ is possible. 
Lin et al. \cite{Lin2006} were the first to introduce learning SPAMs of the form \eqref{eq:gen_spam_eq}, and 
proposed the COSSO algorithm. Recently, Dalalyan et al. \cite{Dala2014} and Yang et al. \cite{Yang2015} 
studied \eqref{eq:gen_spam_eq} in the regression setting and derived non-asymptotic error rates for estimating $f$. 
In particular, Dalalyan et al. studied this in the Gaussian white noise 
model,
while Yang et al. considered the Bayesian setup wherein a Gaussian process (GP) prior is placed on $f$. When $f$ is multilinear, 
the work of Nazer et al. \cite{Nazer2010}, which is in the CS framework, gives non-asymptotic sampling bounds for recovering $\totsupp_r$, $r=1,\dots,\order$. 

\subsection{Our contributions and main idea}
Before proceeding, we will briefly mention our problem setup to put our results in the context; it is described more formally later on in Section \ref{sec:problem}. 
We consider $f:[-1,1]^d \rightarrow \matR$ of the form \eqref{eq:gen_spam_eq} and denote by ${\mathcal S}_j^{(1)}$
the variables occurring in ${\mathcal S}_j$. We assume, that ${\mathcal S}^{(1)}_j$ are disjoint\footnote{For $\order = 2$,
this represents no additional assumption. See discussion after Proposition \ref{prop:mod_unique_bivar}.}
for $1\le j\le\order.$
Each component $\phi$ is assumed to be H\"older smooth, and is also assumed to be ``sufficiently large'' at some point within its domain. 
Our goal is to query $f$ at few locations in $\calG = [-1,1]^d$, and recover the underlying sets of interactions $\totsupp_r$, for each $r=1,\dots,\order$.

\paragraph{Our results.}
To our knowledge, we provide the first non-asymptotic sampling bounds for exact identification 
of $\totsupp_r$, for each $r=1,\dots,\order$, for SPAMs of the form \eqref{eq:main_sam_compl_res}. 
In particular, we derive a randomized algorithm that with high probability recovers 
each $\totsupp_r$, $r=1,\dots,\order$ with
\begin{equation} \label{eq:main_sam_compl_res}
\Omega\Biggl(\sum_{i=3}^{\order}\Bigl[\underbrace{c_i^i \jvyb{i^{2}} \abs{\totsupp_i}^2 \log^2 d}_{\text{Identifying } \totsupp_i} \Bigr] 
+ \underbrace{c_2 \abs{\bivsupp} \log\left(\frac{d^2}{\abs{\bivsupp}}\right) \log d}_{\text{Identifying } \totsupp_2}
+  \underbrace{c_1 \abs{\univsupp} \log \left(\frac{d}{\abs{\univsupp}}\right)}_{\text{Identifying } \totsupp_1}\Biggr)
\end{equation}
noiseless queries of $f$ within $[-1,1]^d$. The same bound holds when the queries are corrupted with
arbitrary bounded noise provided the noise magnitude is sufficiently 
small (see Theorem \ref{thm:main_multiv_arbnoise} \jvyb{and Remark \ref{rem:samp_compl_fin_multiv}}). 
\jvyb{Here, the $c_i$'s depend on the smoothness parameters of the $\phi_i$'s and scale as $\sqrt{i}$ with $i$.
In the setting of i.i.d. Gaussian noise, which we handle by resampling each query sufficiently many times, and averaging, 
we obtain a similar sample complexity as \eqref{eq:main_sam_compl_res} with additional factors 
depending on the variance of the noise (see Theorem \ref{thm:main_multiv_gauss_noise})}.  

We improve on the recent work of Tyagi et al. \cite{Tyagi_aistats16,Tyagi_spamint_long16}, wherein SPAMs with $\order = 2$ were considered, 
by being able to handle general $\order \geq 1$.  Moreover, we only require $f$ to be H\"older smooth while the algorithms in 
\cite{Tyagi_aistats16,Tyagi_spamint_long16} necessarily require $f$ to be continuously differentiable.
Finally, our bounds improve upon those in \cite{Tyagi_aistats16,Tyagi_spamint_long16} when the noise is i.i.d. Gaussian. 
In this scenario, our bounds are linear in the sparsity $\abs{\totsupp_2} + \abs{\totsupp_1}$ while those 
in \cite{Tyagi_aistats16,Tyagi_spamint_long16} are polynomial in the sparsity.  

The sampling scheme that we employ to achieve these bounds is novel, and is specifically tailored to the additive nature of $f$. 
We believe this scheme to be of independent interest for other problems involving additive models, such as in optimization of 
high dimensional functions with partially separable structure.

\paragraph{Main idea.} We identify each set $\totsupp_i$ in a sequential ``top down'' manner by first 
identifying $\totsupp_{\order}$. Once we find $\totsupp_{\order}$, the same procedure is repeated on the 
remaining set of variables (excluding those found in $\totsupp_{\order}$) to identify $\totsupp_{\order-1}$, and consequently, each 
remaining $\totsupp_i$. We essentially perform the following steps for recovering $\totsupp_{\order}$.
Consider some given partition of $[d]$ into $\order$ disjoint subsets $\calA = (\calA_1,\dots,\calA_{\order})$, 
a \randvar vector $\sampsgnvec \in \set{-1,1}^d$ and some given $\vecx \in [-1,1]^d$. 
We generate $2^{\order}$ query points $(\vecx_i)_{i=1}^{2^{\order}}$, where each $\vecx_i$ is constructed using $\sampsgnvec,\vecx$ and $\calA$. 
Then, for some fixed sequence of signs $s_1,s_2,\dots,s_{2^{\order}} \in \set{-1,1}$ (depending only on $\order$), we 
show for the anchored-ANOVA representation of $f$ (see Section \ref{sec:problem} and Lemma \ref{lem:samp_motiv_multiv}) that
\begin{align} \label{eq:main_mult_samp_obs}
\sum_{i=1}^{2^{\order}} s_i f(\vecx_i)  = 
\sum_{(j_1,\dots,j_{\order}) \in \calA \cap \totsupp_{\order}} \beta_{j_1}\dots \beta_{j_{\order}} \phi_{(j_1,\dots,j_{\order})} (x_{j_1},\dots,x_{j_{\order}}).
\end{align}
Observe, that \eqref{eq:main_mult_samp_obs} corresponds to a \emph{multilinear} measurement of a \emph{sparse} vector 
with entries $\phi_{(j_1,\dots,j_{\order})} (x_{j_1},\dots,x_{j_{\order}})$, indexed by the tuple $(j_1,\dots,j_{\order})$. 
Indeed, this vector is $\abs{\totsupp_{\order}}$ sparse. This suggests that by repeating the above process at sufficiently many 
random $\sampsgnvec$'s, we can recover an estimate of this $\abs{\totsupp_{\order}}$ vector by using known results from CS. 
Thereafter, we repeat the above process for each $\calA$ corresponding to a family of perfect hash functions (see Definition \ref{def:thash_fam}). 
The size of this set is importantly at most exponential in $\order$, and only logarithmic in $d$. The $\vecx$'s are then chosen to be points 
on a uniform $\order$ dimensional grid constructed using $\calA$. This essentially enables us to guarantee that we are able to sample 
each $\phi_{(j_1,\dots,j_{\order})}$ sufficiently fine within its domain, and thus identify $(j_1,\dots,j_{\order})$ by thresholding.

\paragraph{Organization of paper.} The rest of the paper is organized as follows. In Section \ref{sec:problem}, we set up the notation 
and also define the problem formally. In Section \ref{sec:spam_univ}, we begin with the case $\order = 1$ as warm up, and describe 
the sampling scheme, along with the algorithm for this setting. Section \ref{sec:spam_biv} considers the bivariate case $\order = 2$, 
while Section \ref{sec:spam_multiv} consists of the most general setting wherein $\order \geq 2$ is possible. Section \ref{sec:sparse_rec_results} 
contains (mostly) known results from compressed sensing for estimating sparse multilinear functions from random samples. Section \ref{sec:put_togeth_final} 
then puts together the content from the earlier sections, wherein we derive our final theorems. Section \ref{sec:disc_conc_rems} consists of a 
comparison of our results with closely related work, along with some directions for future work. 

\section{Notation and problem setup} \label{sec:problem}
\paragraph{Notation.} Scalars will be usually denoted by plain letters (e.g. $\dimn$), 
vectors by lowercase boldface letters (e.g., ${\vecx}$), matrices by uppercase boldface
letters (e.g., ${\matA}$) and sets by uppercase calligraphic letters (e.g., $\calS$),
with the exception of $[n]$, which denotes the index set $\set{1, \ldots, n}$ for any natural number $n\in\matN$.
For a (column) vector $\vecx = (x_1\dots x_d)^{T}$ and an \jvyb{ ordered $r$-tuple $\vecj = (j_1, j_2, \dots, j_r) \in {[d] \choose r}$
with $1\le j_1<j_2<\dots<j_r\le d$,} 
we denote $\vecx_{\vecj} = (x_{j_1} \dots x_{{j_r}})^{T} \in \matR^{r}$ to be the restriction of $\vecx$ on $\vecj$. 
For any finite set $\calA$, $\abs{\calA}$ denotes the cardinality of $\calA$. Moreover, if $\calA \subseteq [d]$, 
then $\setproj_{\calA}(\vecx)$ denotes the projection of $\vecx$ on $\calA$ where
\begin{equation} \label{eq:notation_proj}
(\setproj_{\calA}(\vecx))_i = \left\{
\begin{array}{rl}
x_i \ ; & i \in \calA, \\
0 \ ; & i \notin \calA,
\end{array} \right. \quad i \in [d].
\end{equation} 
The $\ell_p$ norm of a vector $\vecx \in \matR^{\dimn}$ is defined as $\|\vecx\|_p
:= \left ( \sum_{i=1}^\dimn \abs{x_i}^p \right )^{1/p}$.
A random variable $\sampsgn$ is called \randvar variable if $\sampsgn=+1$ with 
probability $1/2$ and $\sampsgn=-1$ with probability $1/2.$
A vector $\sampsgnvec\in\{-1,+1\}^d$ of independent \randvar variables is called \randvar vector. 
Similarly, a matrix $\matB\in\{-1,+1\}^{n\times d}$
is called \randvar matrix, if all its entries are independent \randvar variables.
\jvyb{For non-negative functions $f,g$ we write $f(x) = \Theta(g(x))$ if there exist constants $c_1,c_2,x_0 > 0$ 
such that $c_1 g(x) \leq f(x) \leq c_2 g(x)$ for all $x \geq x_0$. Similarly, if there exist constants $c,x_0 > 0$ such 
that} 
\begin{itemize}
\item \jvyb{$f(x) \leq cg(x)$ for all $x \geq x_0$, then we write $f(x) = O(g(x))$;}
\item \jvyb{$f(x) \geq c g(x)$ for all $x \geq x_0$, then we write $f(x) = \Omega(g(x))$.}
\end{itemize}

\paragraph{Sparse Additive Models.} For an unknown $f:\matR^d \rightarrow \matR$, our aim will be to approximate $f$ uniformly from point queries within a compact 
domain $\calG \subset \matR^d$. From now on, we will assume $\calG = [-1,1]^d$. 
The sets $\totsupp_r \subset {[d] \choose r}; r=1,\dots,\order$, 
will represent the interactions amongst the coordinates, with $\totsupp_r$ consisting of 
$r$-wise interactions. Our interest will be in the setting where each $\totsupp_r$ is sparse, i.e., 
$\abs{\totsupp_r} \ll d^r$. Given this setting, we assume to have the following structure

\begin{equation} \label{eq:gspam_form}
f = \sum_{\vecj \in \totsupp_1} \phi_{\vecj} + \sum_{\vecj \in \totsupp_2} \phi_{\vecj} + \dots 
+ \sum_{\vecj \in \totsupp_{\order}} \phi_{\vecj}.
\end{equation}

It is important to note here that the components in $\totsupp_r$ will be assumed to be truly $r$-variate, 
in the sense that they cannot be written as the sum of lower dimensional functions. For example, we assume that the components in 
$\bivsupp$ cannot be expressed as the sum of univariate functions. 


\paragraph{Model Uniqueness and ANOVA-decompositions.} We note now that the representation of $f$ in \eqref{eq:gspam_form} is 
not necessarily unique and some additional assumptions are needed to ensure uniqueness. For instance, one could add 
constants to each $\phi$ that sum up to zero, thereby giving the same $f$. Moreover, if $\bivsupp$ contains overlapping pairs 
of variables, then for each such variable -- call it $p$ -- one could add/subtract functions of the variable $x_p$ to each corresponding 
$\phi_{\vecj}$ such that $f$ remains unaltered. To obtain unique representation of $f$,
we will work with the so-called Anchored ANOVA-decomposition of $f$. We recall its notation and results in the form needed later
and refer to \cite{holtz2010sparse} for more details.

The usual notation of an ANOVA-decomposition works with functions indexed by subsets of $[d]$,
instead of tuples from $[d]$. As there is an obvious one-to-one correspondence between
\jvyb{ordered} $r$-tuples and subsets of $[d]$ with $r$ elements, we prefer to give the ANOVA-decomposition
in its usual form.

Let $\mu_j,j=1,\dots,d$ be measures defined on all Borel subsets of $[-1,1]$
and let ${U}\subseteq [d]$. We let $d\mu_U(\vecx_U)=\prod_{j\in U} d\mu_j(x_j)$
be the product measure. We define
$$
P_{U}f(\vecx_{U})=\int_{[-1,1]^{d-|{U}|}} f(\vecx)d\mu_{[d]\setminus{U}}(\vecx_{[d]\setminus U}).
$$
The ANOVA-decomposition of $f$ is then given as
\begin{align*}
f(\vecx)&=f_\emptyset+\sum_{i=1}^d f_i(x_i)+\sum_{i=1}^{d-1}\sum_{j=i+1}^d f_{i,j}(x_i,x_j)+\dots+f_{1,\dots,d}(x_1,\dots,x_d)=\sum_{U\subseteq[d]}f_{U}(\vecx_{U}),
\end{align*}
where
\begin{equation} \label{eq:anova_comp_exp}
f_{U}(\vecx_{U})=\sum_{V\subseteq U}(-1)^{|U|-|V|}P_{V}f(\vecx_{V}).
\end{equation}
In the case of $d\mu_j(x_j)=\delta(x_j)dx_j$, where $\delta$ is the Dirac distribution,
we obtain the Anchored-ANOVA decomposition
\begin{align*}
f(\vecx)&=\sum_{U\subseteq[d]}f_{U}(\vecx_{U}),
\end{align*}
where $f_{\emptyset}=f(0)$ and $f_{U}(\vecx_{U})=0$ if $x_j=0$ for some $j\in{U}.$

The standard theory of ANOVA decompositions is usually based on Hilbert space theory.
As we prefer to work with continuous functions, we give
the following representation theorem. The proof can be found in the Appendix.
%
\begin{proposition}\label{lem:anova} Let $f\in C([-1,1]^d)$. 
\jvyb{Then the collection of $(f_U)_{U\subseteq[d]}$ defined in \eqref{eq:anova_comp_exp} is the unique 
system such that the following holds.}
\begin{enumerate}
\item[a)] $f_U\in C([-1,1]^{|U|})$;
\item[b)] $f$ can be represented as
\begin{equation}\label{eq:anova1}
f(\vecx)=\sum_{U\subseteq[d]}f_U(\vecx_U),\quad \vecx\in[-1,1]^d,
\end{equation}
where $\vecx_U\in[-1,1]^{|U|}$ is the restriction of $\vecx$ onto indices included in $U$;
\item[c)] $f_{U}(\vecx_{U})=0$ if $x_j=0$ for some $j\in{U}$.
\end{enumerate}
\end{proposition}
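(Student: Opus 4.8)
The plan is to reduce everything to a single \emph{consistency identity} on the Boolean lattice of subsets of $[d]$, namely $P_U f(\vecx_U) = \sum_{V\subseteq U} f_V(\vecx_V)$, which is exactly the Möbius inverse of the defining formula \eqref{eq:anova_comp_exp}. The central simplification is that with the Dirac measures $d\mu_j(x_j)=\delta(x_j)\,dx_j$ the operator $P_V$ is merely evaluation at the anchor: $P_V f(\vecx_V) = f(\setproj_V(\vecx))$, i.e.\ we zero out every coordinate outside $V$ and keep the rest. Once the consistency identity is established, property b) is its special case $U=[d]$ (since $P_{[d]}f=f$), and uniqueness follows because Möbius inversion is invertible.

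First I would dispatch the two elementary properties. Property a) is immediate: $\setproj_V$ is continuous and $f\in C([-1,1]^d)$, so each $P_V f=f\circ\setproj_V$ is continuous on $[-1,1]^{|V|}$, and $f_U$, being a finite signed sum of such functions, is continuous. For property c), fix $U$ and $j\in U$ with $x_j=0$, and pair each $V\subseteq U$ with $j\notin V$ to its companion $V\cup\{j\}$. When $x_j=0$ the point $\setproj_{V\cup\{j\}}(\vecx)$ has its $j$-th coordinate equal to $0$ as well, so it coincides with $\setproj_V(\vecx)$; hence $P_{V\cup\{j\}}f(\vecx_{V\cup\{j\}})=P_V f(\vecx_V)$, while the signs $(-1)^{|U|-|V|}$ and $(-1)^{|U|-|V|-1}$ are opposite. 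The terms cancel in pairs and $f_U(\vecx_U)=0$.

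Next I would prove the consistency identity itself. Substituting the definition of $f_V$ and exchanging the order of summation gives $\sum_{V\subseteq U} f_V(\vecx_V) = \sum_{W\subseteq U} P_W f(\vecx_W)\sum_{W\subseteq V\subseteq U}(-1)^{|V|-|W|}$. Writing $T=V\setminus W$, the inner sum equals $\sum_{T\subseteq U\setminus W}(-1)^{|T|}=(1-1)^{|U\setminus W|}$, which vanishes unless $W=U$ and equals $1$ when $W=U$. Hence the double sum collapses to $P_U f(\vecx_U)$, proving the identity and, with $U=[d]$, property b).

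Finally, for uniqueness suppose $(g_U)_{U\subseteq[d]}$ is another system satisfying a)--c). Applying $\setproj_U$ to b) and using c) to annihilate every $g_W$ with $W\not\subseteq U$ (any such $W$ contains an index outside $U$ that gets zeroed), I obtain $P_U f(\vecx_U)=\sum_{W\subseteq U} g_W(\vecx_W)$, the very relation already satisfied by the $f_W$. An induction on $|U|$ then forces $g_U=f_U$: the base case $U=\emptyset$ gives $g_\emptyset=P_\emptyset f=f(\veco)=f_\emptyset$, and for larger $U$ the identity determines $g_U(\vecx_U)=P_U f(\vecx_U)-\sum_{W\subsetneq U} g_W(\vecx_W)$, which equals $f_U(\vecx_U)$ by the inductive hypothesis. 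The only real subtlety here—rather than a genuine obstacle—is careful bookkeeping of the Dirac ``integrals'' as restrictions and organizing the inclusion--exclusion cancellations; once $P_U f=\sum_{V\subseteq U}f_V$ is in hand, both existence and uniqueness follow formally.
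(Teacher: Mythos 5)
Your proof is correct, and the existence half (continuity, the pairing argument for c), and the collapse of the double sum via $\sum_{T\subseteq U\setminus W}(-1)^{|T|}=(1-1)^{|U\setminus W|}$) is the same computation as in the paper, which carries it out only for $U=[d]$. Where you genuinely diverge is the uniqueness step. The paper proceeds by induction on the ambient dimension $d$: for a competing system it restricts $f$ to the coordinate subspace of a proper subset $W$, notes that the restricted family is an anchored decomposition in dimension $|W|\le d-1$ satisfying a)--c), invokes the proposition inductively to pin down all components $f_U$ with $U\subsetneq[d]$, and recovers $f_{[d]}$ as the residual. You instead promote the consistency identity $P_U f(\vecx_U)=\sum_{V\subseteq U}f_V(\vecx_V)$, valid for \emph{every} $U$, to the central lemma, derive the same triangular relation $P_U f(\vecx_U)=\sum_{W\subseteq U}g_W(\vecx_W)$ for any competing system (property c) annihilating the terms with $W\not\subseteq U$), and conclude by a single induction on $|U|$, i.e.\ by M\"obius inversion on the subset lattice. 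Both routes hinge on the same restriction-plus-cancellation mechanism, but yours is self-contained within the fixed dimension $d$ and avoids the nested invocation of the proposition in lower dimensions (and the implicit relabeling of coordinates it requires); the paper's dimension induction, in exchange, never needs to state the general-$U$ identity for the constructed system explicitly.
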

The Anchored ANOVA-decomposition \eqref{eq:anova1} can be used to ensure uniqueness of representation
of $f$ of the form \eqref{eq:gspam_form}. For the clarity of presentation, we will later distinguish between
three settings. The first one is univariate with $r_0=1$, the second one with $r_0=2$ allows also for bivariate
interactions between the variables. Finally, in the multivariate case $r_0>2$, arbitrary
higher-order interactions can occur. We will present a detailed proposition about the corresponding ANOVA-decomposition
in each of the sections separately.

\paragraph{Assumptions.} We will specify the assumptions in each of the settings discussed later in more detail.
But, in general, we will work with two groups of conditions.
\begin{enumerate}
\item \textit{Smoothness.} We will assume throughout the paper that
the components of the ANOVA-decomposition are H\"older smooth
with exponent $\alpha\in (0,1]$ and constant $L>0$, i.e.,
$$
|\phi(\vecx)-\phi(\vecy)|\le L\norm{\vecx-\vecy}_2^\alpha
$$
for all admissible $\vecx,\vecy$.
\item \textit{Identifiability.} Furthermore, our aim is the identification of the possible interactions between the variables.
We are therefore not only interested in the approximation of $f$ but also on the identification
of the sets $\univsupp,\bivsupp,\dots,\totsupp_{r_0}$. Naturally, this is only possible if
the non-zero functions in the Anchored-ANOVA decomposition are significantly large at some point.
We will therefore assume that
$$
\|\phi\|_\infty=\sup_{\vecx}|\phi(\vecx)|>D
$$
for some $D>0$.
\end{enumerate}

\paragraph{Problem parameters and goal.} 
Based on the above setup, we will consider our problem specific parameters 
to be
\begin{itemize}
\item[(a)] smoothness parameters: $L > 0$, $\alpha \in (0,1]$,
\item[(b)] identifiability parameters: $D$,
\item[(c)] intrinsic/extrinsic dimensions: $d, r_0, \abs{\univsupp},\dots, \abs{\totsupp_{r_0}}$.
\end{itemize}
These parameters will be assumed to be known 
by the algorithm. The goal of the algorithm will then be to query $f$ within $[-1,1]^d$, and to identify the sets
$\univsupp$, \dots, $\totsupp_{r_0}$ exactly. Using standard methods of approximation theory
and sampling along canonical subspaces, one may recover also the components in \eqref{eq:gspam_form}.
We give some more details on this issue in Section \ref{sec:disc_conc_rems}.
\section{The univariate case} \label{sec:spam_univ}
As a warm up, we begin with the relatively simple setting where $r_0=1$, 
meaning that $f$ is a sum of only univariate components. It means that
$f$ admits the representation
\begin{equation} \label{eq:unique_anova_univ_spam}
f = \modmean + \sum_{p \in \univsupp} \phi_{p}(x_{p}).
\end{equation}
To ensure the uniqueness of this decomposition, we set $\mu = f(0)$ and assume that
$\phi_p(0)=0$ for all $p \in \univsupp$.

\paragraph{Assumptions.} We will make the following assumptions on the model \eqref{eq:unique_anova_univ_spam}.
\begin{enumerate}
\item \textit{Smoothness.} The terms in \eqref{eq:unique_anova_univ_spam} are H\"older continuous with parameters
$L > 0, \alpha \in (0,1]$, i.e.,
\begin{align*}
\abs{\phi_{p}(x) - \phi_{p}(y)} &\leq L |x - y|^{\alpha}\quad \text{for all}\quad p\in\univsupp\quad\text{and all}\quad x,y\in[-1,1].
\end{align*} 
%
\item \textit{Identifiability.} For every $p\in\univsupp$ there is an $x_p^*\in[-1,1]$, such that $|\phi_p(x_p^*)|>D_1$.
\end{enumerate}


%
\paragraph{Sampling scheme.} Our sampling scheme is motivated by 
the following simple observation. For any fixed $\vecx \in [-1,1]^d$, and 
some $\sampsgnvec \in \set{-1,+1}^d$, consider the points $\xposvec, \xnegvec \in [-1,1]^d$ 
defined as

\begin{equation} \label{eq:univsamp_query_pts}
\xpos_i = \left\{
\begin{array}{rl}
x_i \ ; & \sampsgn_i = +1, \\
0 \ ; & \sampsgn_i = -1
\end{array} \right. \quad \text{and}\quad
\xneg_i = \left\{
\begin{array}{rl}
0 \ ; & \sampsgn_i = +1, \\
x_i \ ; & \sampsgn_i = -1,
\end{array} \right. \ i \in[d].
\end{equation}
%
Upon querying $f$ at $\xposvec, \xnegvec$, we obtain the noisy samples
$$\ftil(\xposvec) = f(\xposvec) + \noise^{+}, \quad \ftil(\xnegvec) = f(\xnegvec) + \noise^{-},$$  
where $\noise^{+}, \noise^{-} \in \matR$ denotes the noise. One can then easily verify that the following identity holds on account of the 
structure of $f$
\begin{align} 
\ftil(\xposvec) - \ftil(\xnegvec) 
&= \sum_{i\in\univsupp} \sampsgn_i\underbrace{\phi_i(x_i)}_{\origspars_i(x_i)} + \noise^{+} - \noise^{-} \label{eq:univspam_samp_ident}  
= \dotprod{\sampsgnvec}{\origsparsvec(\vecx)} + \noise^{+} - \noise^{-}. 
\end{align}
Note that $\origsparsvec(\vecx) = (\origspars_1(x_1) \dots \origspars_d(x_d))^T$ is $\abs{\univsupp}$ sparse, 
and $\ftil(\xposvec) - \ftil(\xnegvec)$ corresponds to a noisy linear 
measurement of $\origsparsvec(\vecx)$, with $\sampsgnvec$. From standard compressive sensing results, 
we know that a sparse vector can be recovered \emph{stably}, from only a few noisy linear measurements with random vectors, drawn from a suitable distribution. In particular, it is well established that random \randvar measurements satisfy this criteria. We discuss this separately later on, for now it suffices to assume that we have at hand an appropriate sparse 
recovery algorithm: $\csalg$. 

We thus generate independent \randvar vectors $\sampsgnvec_1, \sampsgnvec_2, \dots, \sampsgnvec_n\in\{-1,+1\}^d$. 
For each $\sampsgnvec_i$, we create $\xposvec_i,\xnegvec_i$ as described in \eqref{eq:univsamp_query_pts} (for some fixed $\vecx$), 
and obtain $\ftil(\xposvec_i), \ftil(\xnegvec_i)$. Then, \eqref{eq:univspam_samp_ident} gives us the linear system
\begin{equation} \label{eq:lin_sys_univspam}
\underbrace{\begin{pmatrix}
  \ftil(\xposvec_1) - \ftil(\xnegvec_1)  \\ \vdots \\ \vdots \\ \ftil(\xposvec_n) - \ftil(\xnegvec_n)
 \end{pmatrix}}_{\vecy}
= 
\underbrace{\begin{pmatrix}
  \sampsgnvec_1^T  \\ \vdots \\ \vdots \\ \sampsgnvec_n^T
 \end{pmatrix}}_{\matB} \origsparsvec(\vecx) 
+ \underbrace{\begin{pmatrix}
  \noise_1^{+} - \noise_1^{-}  \\ \vdots \\ \vdots \\ \noise_n^{+} - \noise_n^{-}
 \end{pmatrix}}_{\noisevec}.
\end{equation}
$\csalg$ will take as input $\vecy, \matB$, and will output an estimate $\est{\origsparsvec}(\vecx)$ to 
$\origsparsvec(\vecx)$. 
\jvyb{As will be shown formally in Section \ref{sec:sparse_rec_results}, one can choose 
$\csalg$ as a $\ell_1$ minimization (convex) program for which it is well known from the  
compressive sensing literature that if $n$ is sufficiently large, then 
we will have for some $\epsilon \geq 0$ depending on $\norm{\noisevec}_{\infty}$ that 
$\norm{\est{\origsparsvec}(\vecx) - \origsparsvec(\vecx)}_{\infty} \leq \epsilon$ holds.} 
In such a case, we will refer to $\csalg$ as being ``$\epsilon$-accurate'' at $\vecx$. 
\jvyb{Also, we remark that the choice for $\csalg$ that we consider in Section \ref{sec:sparse_rec_results} 
will need an upper bound estimate of the noise level $\noisevec$ (in a suitable norm).}

Given the above, we now describe how to choose $\vecx \in [-1,1]^d$. To this end, we adopt the 
approach of \cite{Tyagi14_nips}, where the following grid on the diagonal of $[-1,1]^d$ was considered
\begin{align} \label{eq:univ_spam_base_pts}
\baseset := \bigg\{\vecx = (x \ x \ \cdots \ x)^T \in \matR^{\dimn}: x &\in \Bigl\{-1,-\frac{\numbasepts-1}{\numbasepts},\dots,\frac{\numbasepts-1}{\numbasepts},1\Bigr\}\bigg\}.
\end{align}
Our aim will be to obtain the estimate $\est{\origsparsvec}(\vecx)$ at each $\vecx \in \baseset$. 
Note that this gives us estimates to $\phi_p(x_p)$ for $p = 1,\dots,d$, with $x_p$ 
lying on a uniform one dimensional grid in $[-1,1]$. Thus we can see, at least intuitively, that 
provided $\epsilon$ is small enough, and the grid is fine enough (so that we are close to $\phi_p(x_p^{*})$ 
for each $p \in \univsupp$), we will be able to detect each $p \in \univsupp$ by thresholding. 

\paragraph{Algorithm outline and guarantees}
The discussion above is outlined formally in the form of Algorithm \ref{algo:univ_spam}. 
Lemma \ref{lemma:univ_spam_algo} below provides formal guarantees for exact recovery of 
support $\univsupp$.
\begin{algorithm*}[!ht]
\caption{Algorithm for estimating $\univsupp$} \label{algo:univ_spam} 
\begin{algorithmic}[1] 
\State \textbf{Input:} $d$, $\abs{\univsupp}$, $m$, $n$, $\epsilon$.
\State \textbf{Initialization:} $\est{\univsupp} = \emptyset$.
\State \textbf{Output:} $ \est{\univsupp}$. \\
\hrulefill

\State Construct $\baseset$ as defined in \eqref{eq:univ_spam_base_pts} with $\abs{\baseset} = 2m+1$. \label{algounivspam:base_constr}

\State Generate \randvar vectors $\sampsgnvec_1, \sampsgnvec_2, \dots, \sampsgnvec_n\in\{-1,+1\}^d$. 

\State Form $\matB \in \matR^{n \times d}$ as in \eqref{eq:lin_sys_univspam}. \label{algounivspam:gen_betas}

\For{$\vecx \in \baseset$}

	\State Generate $\xposvec_i, \xnegvec_i \in [-1,1]^d$, as in \eqref{eq:univsamp_query_pts}, using $\vecx, \sampsgnvec_i$ for each $i\in[n]$. 
	\label{algounivspam:gen_xposneg}
	
	\State Using the samples $(\ftil(\xposvec_i), \ftil(\xnegvec_i))_{i=1}^{n}$, form $\vecy$ as in \eqref{eq:lin_sys_univspam}. \label{algounivspam:query_f}

	\State Obtain $\est{\origsparsvec}(\vecx) = \csalg(\vecy, \matB)$. \label{algounivspam:sparse_rec}
			
	\State Update $\est{\univsupp} = \est{\univsupp} \cup \bigl\{p \in [d]: \abs{(\est{\origsparsvec}(\vecx))_p} > \epsilon\bigr\}$. \label{algounivspam:update_supp}

\EndFor

\end{algorithmic}
\end{algorithm*}

\begin{lemma} \label{lemma:univ_spam_algo}
Let $\csalg$ be $\epsilon$-accurate for each $\vecx \in \baseset$ with $\epsilon < \idenconst_1/3$,
which uses $n$ linear measurements. Then for $m \geq (3L/\idenconst_1)^{1/\alpha}$, Algorithm \ref{algo:univ_spam} 
recovers $\univsupp$ exactly, i.e., $\est{\univsupp} = \univsupp$. Moreover, the total number of queries 
of $f$ is $2(2m+1)n$.
\end{lemma}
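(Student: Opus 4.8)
The plan is to establish the two inclusions $\est{\univsupp} \subseteq \univsupp$ (no false positives) and $\univsupp \subseteq \est{\univsupp}$ (no false negatives) separately, using the $\epsilon$-accuracy of $\csalg$ together with the grid resolution guaranteed by the choice of $m$. Throughout I would use that at a diagonal grid point $\vecx = (x \ \cdots \ x)^T \in \baseset$ the true vector has entries $\origspars_p(x) = \phi_p(x)$ for $p \in \univsupp$ and $\origspars_p(x) = 0$ for $p \notin \univsupp$, and that $\epsilon$-accuracy means $\norm{\est{\origsparsvec}(\vecx) - \origsparsvec(\vecx)}_\infty \le \epsilon$ at every such $\vecx$.

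For the no-false-positive direction I would fix $p \notin \univsupp$ and an arbitrary $\vecx \in \baseset$. Since $\origspars_p(x) = 0$, the accuracy bound gives $\abs{(\est{\origsparsvec}(\vecx))_p} = \abs{(\est{\origsparsvec}(\vecx))_p - \origspars_p(x)} \le \epsilon$. As the thresholding update in Algorithm \ref{algo:univ_spam} adds $p$ only when this quantity is strictly larger than $\epsilon$, such a $p$ is never added; hence $\est{\univsupp} \subseteq \univsupp$. This part is immediate and does not use the hypothesis on $m$.

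The crux is the no-false-negative direction. I would fix $p \in \univsupp$ and use the identifiability point $x_p^*$ with $\abs{\phi_p(x_p^*)} > D_1$. Because $\baseset$ samples the diagonal on a uniform grid of spacing $1/m$, there is a point $\vecx \in \baseset$ whose common coordinate $x$ satisfies $\abs{x - x_p^*} \le 1/m$. Hölder continuity then yields $\abs{\phi_p(x)} \ge \abs{\phi_p(x_p^*)} - L\abs{x - x_p^*}^\alpha \ge D_1 - L(1/m)^\alpha$. The choice $m \ge (3L/D_1)^{1/\alpha}$ forces $L(1/m)^\alpha \le D_1/3$, so $\abs{\phi_p(x)} \ge 2D_1/3$, and since $\epsilon < D_1/3$ this gives $\abs{\phi_p(x)} > 2\epsilon$. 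Combining with $\epsilon$-accuracy, $\abs{(\est{\origsparsvec}(\vecx))_p} \ge \abs{\phi_p(x)} - \epsilon > \epsilon$, so $p$ is added to $\est{\univsupp}$ at this particular $\vecx$; hence $\univsupp \subseteq \est{\univsupp}$.

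Together the two inclusions give $\est{\univsupp} = \univsupp$, and the query count follows by bookkeeping: the outer loop runs over the $2m+1$ points of $\baseset$, and for each it queries $f$ at the $2n$ points $(\xposvec_i, \xnegvec_i)_{i=1}^n$, for a total of $2(2m+1)n$ queries. There is no real obstacle here; the only delicate point is the joint calibration of the two constants, since the thresholds $\epsilon < D_1/3$ and the grid bound $m \ge (3L/D_1)^{1/\alpha}$ are exactly what is needed so that the recovered signal at the good grid point clears the threshold by a positive margin, and I would take care that the inequalities stay strict wherever the thresholding rule demands it.
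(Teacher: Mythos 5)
Your proof is correct and follows essentially the same argument as the paper: locate a grid point within $1/m$ of the identifiability point $x_p^*$, combine H\"older continuity with the $\epsilon$-accuracy of $\csalg$ via the reverse triangle inequality to show the estimate clears the threshold for $p \in \univsupp$, and note that coordinates outside $\univsupp$ never exceed $\epsilon$. The only difference is presentational — you separate the two inclusions explicitly and track strictness of the inequalities, which the paper leaves implicit.
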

\begin{proof}
Recall that we denote $\origsparsvec(\vecx) = (\phi_1(x_1)\dots\phi_d(x_d))^T$, 
and $\origspars_i(x_i) = \phi_i(x_i)$. 
For any given $p \in \univsupp$, we know that there exists $x^{*}_p \in [-1,1]$ such that 
$\abs{\phi_p(x^{*}_p)} > \idenconst_1$. Also, on account of the construction of $\baseset$, 
there exists $\vecx = (x\dots x)^{T} \in \baseset$ such that $\abs{x-x^{*}_p} \leq 1/m$. Then starting 
with the fact that $\csalg$ is $\epsilon$ accurate at $\vecx$, we obtain
\begin{align*}
\abs{\est{\origspars_p}(x)} 
&\geq \abs{\phi_p(x)} - \epsilon \geq \abs{\phi_p(x^{*}_p)}
      - \abs{\phi_p(x^{*}_p) - \phi_p(x)} - \epsilon\\
&\geq \idenconst_1 - \frac{L}{m^{\alpha}} - \epsilon
\geq \frac{2\idenconst_1}{3} - \epsilon.
\end{align*}
We used the reverse triangle inequality and the identifiability and smoothness assumptions 
on $\phi_p$. On the other hand, since $\csalg$ is $\epsilon$ accurate at each point in $\baseset$, therefore 
for every $q \notin \univsupp$ and $(c \ c \ \dots c) \in \baseset$, 
we know that $\abs{\est{\origspars_q}(c)} \leq \epsilon$.  
It then follows readily for the stated choice of $m$, $\epsilon$ that $\est{\univsupp}$ contains each variable 
in $\univsupp$, and none from $\univsupp^{c}$. 
\end{proof}

\section{The bivariate case} \label{sec:spam_biv}
Next, we consider the scenario where $r_0 = 2$, i.e.,  $f$ 
can be written as a sum of univariate and bivariate functions. 
We denote by $\bivsuppvar\jvyb{={\mathcal S}_2^{(1)}}$ the set of variables which are part 
of a $2$-tuple in $\bivsupp$. Inserting this restriction into Proposition \ref{lem:anova}, we derive the following
uniqueness result (its proof is postponed to the Appendix).
%
\begin{proposition}\label{prop:mod_unique_bivar}
Let $f\in C([-1,1]^d)$ be of the form
\begin{equation} \label{eq:mod_unique_bivar}
f = \modmean + \sum_{p \in \univsupp} \phi_{p}(x_{p}) + \sum_{\vecj \in \bivsupp} \phi_{\vecj}(x_{\vecj}) + \sum_{l \in \bivsuppvar} \phi_l(x_l),
\end{equation}
where $\univsupp \cap \bivsuppvar = \emptyset$. Moreover, let 
\begin{enumerate}
\item[a)] $\modmean=f(0)$,
\item[b)] $\phi_j(0)=0$ for all $j\in{\mathcal S}_1\cup \bivsuppvar$,
\item[c)] $\phi_{\vecj}(x_\vecj)=0$ if $\vecj=(j_1,j_2)\in{\mathcal S}_2$ and $x_{j_1}=0$ or $x_{j_2}=0.$
\end{enumerate}
Then the representation \eqref{eq:mod_unique_bivar} of $f$ is unique in the sense that each component in \eqref{eq:mod_unique_bivar} is uniquely 
identifiable.
\end{proposition}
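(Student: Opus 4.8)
The plan is to show that, under the normalization conditions a)--c) stated in the proposition, the representation \eqref{eq:mod_unique_bivar} is forced to coincide with the Anchored-ANOVA decomposition of $f$, whose uniqueness is already guaranteed by Proposition \ref{lem:anova}. To this end I would read off from \eqref{eq:mod_unique_bivar} a candidate collection $(g_U)_{U\subseteq[d]}$ indexed by subsets of $[d]$, using the one-to-one correspondence between tuples and subsets: set $g_\emptyset=\modmean$; for each single index $p\in\univsupp\cup\bivsuppvar$ set $g_{\{p\}}=\phi_p$; for each pair $\vecj=(j_1,j_2)\in\bivsupp$ set $g_{\{j_1,j_2\}}=\phi_{\vecj}$; and set $g_U\equiv 0$ for every remaining $U$. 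Because $\univsupp\cap\bivsuppvar=\emptyset$, every singleton $\{p\}$ receives at most one univariate summand, so this assignment is unambiguous and the summands of \eqref{eq:mod_unique_bivar} are in bijection with the nonzero $g_U$'s.

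Next I would verify that $(g_U)_{U\subseteq[d]}$ satisfies the three defining properties in Proposition \ref{lem:anova}. Item (b) there (that $f=\sum_{U}g_U$) holds by construction, since summing the $g_U$ reproduces exactly the right-hand side of \eqref{eq:mod_unique_bivar}. Item (a) there (continuity of each $g_U$) follows from the continuity assumptions placed on the individual components. The vanishing property, item (c) of Proposition \ref{lem:anova}, is where the present hypotheses enter: for a singleton $\{p\}$ one needs $g_{\{p\}}(0)=\phi_p(0)=0$, which is precisely hypothesis b) (valid for every $p\in\univsupp\cup\bivsuppvar$), while for a pair $\{j_1,j_2\}$ one needs $g_{\{j_1,j_2\}}$ to vanish whenever $x_{j_1}=0$ or $x_{j_2}=0$, which is hypothesis c); the constant $g_\emptyset=\modmean=f(0)$ is pinned down by hypothesis a).

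Having checked these, the uniqueness clause of Proposition \ref{lem:anova} forces $(g_U)_{U\subseteq[d]}$ to be the Anchored-ANOVA decomposition $(f_U)_{U\subseteq[d]}$ of $f$, which is determined by $f$ alone through the explicit formula \eqref{eq:anova_comp_exp}. Consequently $\modmean=f_\emptyset$, each $\phi_p=f_{\{p\}}$, and each $\phi_{\vecj}=f_{\{j_1,j_2\}}$ are uniquely determined by $f$; any second representation obeying a)--c) would produce the same collection $(g_U)$ and hence agree component by component.

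I expect the only real subtlety --- rather than a genuine obstacle --- to be bookkeeping: one must check that the disjointness $\univsupp\cap\bivsuppvar=\emptyset$ is exactly what prevents a single variable from carrying two independently adjustable univariate pieces (which would break uniqueness), and that the index sets $\{p\}$ and $\{j_1,j_2\}$ arising from the three sums never collide, so that the map from summands of \eqref{eq:mod_unique_bivar} to ANOVA components is well defined. Alternatively, and perhaps more transparently, one can bypass the abstract statement and argue directly by restriction: evaluating $f$ along the coordinate axis through a single free variable $x_p$ (all other coordinates set to $0$) annihilates every other univariate term by b) and every bivariate term by c), yielding $\phi_p=f(\cdot)-\modmean$ explicitly; freeing a second variable along the coordinate plane of a pair $(j_1,j_2)\in\bivsupp$ then isolates $\phi_{(j_1,j_2)}$ after subtracting the already-determined $\modmean,\phi_{j_1},\phi_{j_2}$. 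This constructive variant makes uniqueness self-evident and simultaneously furnishes a recipe for recovering the components from samples of $f$.
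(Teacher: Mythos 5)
Your proposal is correct, and it reaches the conclusion by a genuinely different (if closely related) route than the paper. Both arguments reduce the claim to Proposition \ref{lem:anova}, but in opposite directions: the paper computes the anchored-ANOVA components of $f$ explicitly from formula \eqref{eq:anova_comp_exp} --- checking by alternating-sum combinatorics that $f_U=0$ for $|U|\ge 3$, that $f_{\{j_1,j_2\}}$ equals $\phi_{(j_1,j_2)}$ or $0$ according to whether $(j_1,j_2)\in\bivsupp$, and that $f_{\{l\}}$ equals $\phi_l$ or $0$ --- whereas you go the other way, packaging the given representation into a candidate system $(g_U)$, verifying the three characterizing properties, and then citing the uniqueness clause of Proposition \ref{lem:anova}. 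Your direction is shorter: the property checks are immediate from hypotheses a)--c), and you never touch the sums over $V\subseteq U$ or the case $|U|\ge 3$ at all. What the paper's computation buys in exchange is that it never needs continuity of the individual components: the identification $\phi_{\vecj}=f_U$ is established pointwise by algebra, and continuity of the $\phi$'s falls out as a by-product. Your main argument, by contrast, must supply property a) of Proposition \ref{lem:anova} for the candidate system, and Proposition \ref{prop:mod_unique_bivar} as stated only assumes $f\in C([-1,1]^d)$, not continuity of the $\phi$'s; as written you simply assert it. This gap is harmless, because your own ``alternative'' restriction argument closes it and more: the identities $\phi_l(x_l)=f(x_l\vece_l)-f(0)$ and $\phi_{(j_1,j_2)}(x_{j_1},x_{j_2})=f(x_{j_1}\vece_{j_1}+x_{j_2}\vece_{j_2})-f(x_{j_1}\vece_{j_1})-f(x_{j_2}\vece_{j_2})+f(0)$ follow pointwise from b) and c), prove uniqueness directly, and show continuity of the components at the same time. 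That constructive variant is in fact exactly the paper's computation specialized to $|U|\le 2$, so you may as well promote it from an afterthought to the proof, or keep the axiomatic argument and insert these formulas as the one-line justification of property a).
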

\begin{remark}
In \eqref{eq:mod_unique_bivar}, we could have ``collapsed'' the terms corresponding to 
variables $l$ in $\sum_{l \in \bivsuppvar} \phi_l(x_l)$ -- for $l$ occurring exactly once in $\bivsupp$ --   
uniquely into the corresponding component $\phi_{\vecj}(x_{\vecj})$. A similar approach was adopted in \cite{Tyagi_spamint_long16}, 
and the resulting model was shown to be uniquely identifiable. Yet here, we choose to represent $f$ in the 
form \eqref{eq:mod_unique_bivar} for convenience, and clarity of notation. 
This also leads to a less cumbersome expression, when we work with general interaction terms later.
\end{remark}
\paragraph{Assumptions.} We now make the following assumptions on the model \eqref{eq:mod_unique_bivar}.

\begin{enumerate}

\item \textit{Smoothness.} We assume each term in \eqref{eq:mod_unique_bivar} to be H\"older continuous with parameters  
$L > 0, \alpha \in (0,1]$, i.e., 
\begin{align*}
\abs{\phi_p(x) - \phi_p(y)} &\leq L \abs{x - y}^{\alpha} \quad \text{for all}\ p \in \univsupp \cup \bivsuppvar \text{ and for all } x,y \in [-1,1], \\
\abs{\phi_{\vecj}(\vecx) - \phi_{\vecj}(\vecy)} &\leq L \norm{\vecx - \vecy}_2^{\alpha} \quad \text{for all}\ \vecj \in \bivsupp \text{ and for all } \vecx,\vecy \in [-1,1]^2.
\end{align*} 

\item \textit{Identifiability of $\univsupp, \bivsupp$.}
We assume that for each $p \in \univsupp$, there exists $x_p^{*} \in [-1,1]$ so that $\abs{\phi_p(x_p^{*})} > \idenconst_1$ for some constant 
$\idenconst_1 > 0$.
Furthermore, we assume that for each $\vecj \in \bivsupp$
there exists $\vecx^{*}_{\vecj} \in [-1,1]^2$ such that $\abs{\phi_{\vecj}(\vecx^{*}_{\vecj})} > \idenconst_2$. 
\end{enumerate}

%

%
%
\begin{remark}
\jvyb{We consider the same $\alpha, L$ for all components in $\univsupp, \bivsupp$ for the ease of 
exposition only. One can also consider the parameters $\alpha_i, L_i$ for the components in $\mathcal{S}_i$. 
This also applies to the general multivariate setting in Section \ref{sec:spam_multiv}.}
\end{remark}
Before describing our sampling scheme, we need some additional notation. For any 
$\beta \in \set{-1,1}$, we denote $\sampsgnbar = (-\sampsgn)$. Moreover, $\indic_{\sampsgn}$ denotes the indicator variable of
$\sampsgn$, i.e., $\indic_{\sampsgn} = 1$ if $\sampsgn = 1$, and $\indic_{\sampsgn} = 0$ if $\beta=-1$.
Overall, our scheme proceeds in two stages. We first identify $\bivsupp$, and only then identify $\univsupp$.
\paragraph{Sampling lemma for identifying $\bivsupp$.} 
We begin by providing the motivation behind our sampling 
scheme for identifying $\bivsupp$.
Consider some fixed mapping $\hashfn: [d] \rightarrow \set{1,2}$ that partitions 
$[d]$ into $\calA_1 = \set{i \in [d]: \hashfn(i) = 1}$ and $\calA_2 = \set{i \in [d]: \hashfn(i) = 2}$. 
Then for a given \randvar vector $\sampsgnvec \in \set{-1,1}^d$ and $\vecx = (x_1\dots x_d)^T \in [-1,1]^d$, consider the 
points $\vecx_1,\vecx_2,\vecx_3,\vecx_4 \in [-1,1]^d$ defined as
\begin{equation}
\begin{aligned}
x_{1,i} = \left\{
\begin{array}{rl}
\indic_{\sampsgn_i} x_i \ ; & i \in \calA_1, \\
\indic_{\sampsgn_i} x_i \ ; & i \in \calA_2,
\end{array} \right. \quad 
x_{2,i} = \left\{
\begin{array}{rl}
\indic_{\sampsgnbar_i} x_i \ ; & i \in \calA_1, \\
\indic_{\sampsgn_i} x_i \ ; & i \in \calA_2,
\end{array} \right. \label{eq:bivsamp_query_pts_1} \\
x_{3,i} = \left\{
\begin{array}{rl}
\indic_{\sampsgn_i} x_i \ ; & i \in \calA_1, \\
\indic_{\sampsgnbar_i} x_i \ ; & i \in \calA_2,
\end{array} \right. \quad 
x_{4,i} = \left\{
\begin{array}{rl}
\indic_{\sampsgnbar_i} x_i \ ; & i \in \calA_1, \\
\indic_{\sampsgnbar_i} x_i \ ; & i \in \calA_2,
\end{array} \right. 
\end{aligned}
\qquad i\in[d].
\end{equation}
%
%
The following lemma is the key motivation behind our sampling scheme.
\begin{lemma} \label{lem:samp_motiv_biv}
Denote $\calA = \set{\vecj \in {[d] \choose 2}: \vecj \in \set{\calA_1 \times \calA_2} \cup \set{\calA_2 \times \calA_1}}$. 
Then for functions $f$ of the form \eqref{eq:mod_unique_bivar}, we have that 
\begin{align} 
 f(\vecx_1) - f(\vecx_2) - f(\vecx_3) + f(\vecx_4) 
&= \sum_{\vecj \in \bivsupp: j_1 \in \calA_1, j_2 \in \calA_2} \beta_{j_1} \beta_{j_2} \phi_{\vecj} (\vecx_{\vecj})
+ \sum_{\vecj \in \bivsupp: j_1 \in \calA_2, j_2 \in \calA_1} \beta_{j_1} \beta_{j_2} \phi_{\vecj} (\vecx_{\vecj}) \label{eq:multilin_ident_biv_1} \\
&= \sum_{\vecj \in \calA \cap \bivsupp} \beta_{j_1} \beta_{j_2} \phi_{\vecj} (\vecx_{\vecj}). \notag
\end{align}
\end{lemma}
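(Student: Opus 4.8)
The plan is to use the linearity of the map $g\mapsto g(\vecx_1)-g(\vecx_2)-g(\vecx_3)+g(\vecx_4)$, decompose this alternating sum over the summands of \eqref{eq:mod_unique_bivar}, and show that everything except the ``cross'' bivariate terms cancels. First I would record the combinatorial pattern built into \eqref{eq:bivsamp_query_pts_1}: the four query points are obtained by independently toggling the $\calA_1$-block and the $\calA_2$-block of $\vecx$ between its ``$\sampsgn$-form'' $(\indic_{\sampsgn_i}x_i)$ and its ``$\sampsgnbar$-form'' $(\indic_{\sampsgnbar_i}x_i)$. Concretely, $\vecx_1$ and $\vecx_3$ agree on $\calA_1$ while $\vecx_2$ and $\vecx_4$ agree on $\calA_1$; dually, $\vecx_1$ and $\vecx_2$ agree on $\calA_2$ while $\vecx_3$ and $\vecx_4$ agree on $\calA_2$.

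Using this, I would prove a one-line annihilation fact: if a function $g$ depends only on the coordinates in $\calA_1$, then $g(\vecx_1)=g(\vecx_3)$ and $g(\vecx_2)=g(\vecx_4)$, so its alternating sum vanishes; by symmetry the same holds if $g$ depends only on the $\calA_2$-coordinates. Applying this to the components of \eqref{eq:mod_unique_bivar}: the constant $\modmean$ is annihilated; each univariate term $\phi_p(x_p)$ with $p\in\univsupp\cup\bivsuppvar$ depends on a single coordinate lying entirely in $\calA_1$ or entirely in $\calA_2$, hence is annihilated; and each bivariate $\phi_{\vecj}$ with $j_1,j_2$ in the same block depends only on coordinates of one block and is likewise annihilated. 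Thus the only terms that can survive are the bivariate $\phi_{\vecj}$ whose indices split across the two blocks, i.e. $\vecj\in\calA\cap\bivsupp$.

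It then remains to evaluate the contribution of a single surviving term $\vecj=(j_1,j_2)$ with, say, $j_1\in\calA_1$ and $j_2\in\calA_2$. Here I would run a four-case check over $(\sampsgn_{j_1},\sampsgn_{j_2})\in\{-1,1\}^2$, using that $\indic_{\sampsgn_i}x_i=x_i$ when $\sampsgn_i=1$ and $\indic_{\sampsgn_i}x_i=0$ when $\sampsgn_i=-1$. The anchoring assumption (c) of Proposition \ref{prop:mod_unique_bivar}, namely that $\phi_{\vecj}(\vecx_{\vecj})=0$ whenever one argument is zero, is exactly what forces all but one of the four evaluations of $\phi_{\vecj}$ at $\vecx_1,\dots,\vecx_4$ to vanish in each case. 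A short inspection shows that the surviving evaluation always equals $\phi_{\vecj}(x_{j_1},x_{j_2})$ and enters the alternating sum with sign $\beta_{j_1}\beta_{j_2}$. Summing these contributions over $\vecj\in\calA\cap\bivsupp$ produces the right-hand side of \eqref{eq:multilin_ident_biv_1}.

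The only genuine bookkeeping, and the step most prone to a sign slip, is this last four-case verification, since one must track both which of the four points activates the pair $(x_{j_1},x_{j_2})$ and that the induced sign is precisely $\beta_{j_1}\beta_{j_2}$; everything else reduces to linearity together with the block-invariance observation above.
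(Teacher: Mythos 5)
Your proof is correct and follows essentially the same route as the paper's: decompose the alternating sum by linearity, observe that every term depending only on coordinates of a single block cancels under the pairing $(\vecx_1,\vecx_3)$, $(\vecx_2,\vecx_4)$ (respectively $(\vecx_1,\vecx_2)$, $(\vecx_3,\vecx_4)$) --- which is exactly the cancellation the paper verifies term by term for $\modmean$, the univariate components, and the same-block bivariate components --- and then evaluate the surviving cross terms using the anchoring condition. The only cosmetic difference is that the paper replaces your four-case check over $(\sampsgn_{j_1},\sampsgn_{j_2})$ by the single identity
$\phi_{\vecj}(\indic_{\sampsgn_{j_1}} x_{j_1}, \indic_{\sampsgn_{j_2}} x_{j_2}) - \phi_{\vecj}(\indic_{\sampsgnbar_{j_1}} x_{j_1}, \indic_{\sampsgn_{j_2}} x_{j_2}) - \phi_{\vecj}(\indic_{\sampsgn_{j_1}} x_{j_1}, \indic_{\sampsgnbar_{j_2}} x_{j_2}) + \phi_{\vecj}(\indic_{\sampsgnbar_{j_1}} x_{j_1}, \indic_{\sampsgnbar_{j_2}} x_{j_2}) = \beta_{j_1}\beta_{j_2}\bigl(\phi_{\vecj}(x_{j_1},x_{j_2}) - \phi_{\vecj}(x_{j_1},0) - \phi_{\vecj}(0,x_{j_2}) + \phi_{\vecj}(0,0)\bigr)$,
valid simultaneously for all sign patterns, before invoking anchoring.
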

\begin{proof}
For any $\vecj \in \bivsupp$, let us first consider the case where $j_1, j_2$ lie in different sets. For example,
let $j_1 \in \calA_1$ and $j_2 \in \calA_2$. Then the contribution of $\phi_{\vecj}$ to the left-hand side of \eqref{eq:multilin_ident_biv_1} 
turns out to be for all possible values of $\beta_{j_1},\beta_{j_2}\in\{-1,+1\}$ equal to
\begin{align*}
&\phi_{\vecj} (\vecx_{1,\vecj}) - \phi_{\vecj}(\vecx_{2,\vecj}) - \phi_{\vecj}(\vecx_{3,\vecj}) + \phi_{\vecj}(\vecx_{4,\vecj}) \\
&= \phi_{\vecj} (\indic_{\sampsgn_{j_1}} x_{j_1}, \indic_{\sampsgn_{j_2}} x_{j_2}) - \phi_{\vecj}(\indic_{\sampsgnbar_{j_1}} x_{j_1}, \indic_{\sampsgn_{j_2}} x_{j_2}) 
- \phi_{\vecj}(\indic_{\sampsgn_{j_1}} x_{j_1}, \indic_{\sampsgnbar_{j_2}} x_{j_2}) + \phi_{\vecj}(\indic_{\sampsgnbar_{j_1}} x_{j_1}, \indic_{\sampsgnbar_{j_2}} x_{j_2}) \\
&= \beta_{j_1}\beta_{j_2}(\phi_{\vecj} (x_{j_1},x_{j_2}) - \phi_{\vecj}(x_{j_1},0) - \phi_{\vecj}(0,x_{j_2}) + \phi_{\vecj}(0,0))
= \beta_{j_1}\beta_{j_2}\phi_{\vecj} (\vecx_{\vecj}).
\end{align*} 
In case $j_1 \in \calA_2$ and $j_2 \in \calA_1$, then the contribution of $\phi_{\vecj}$ turns out to be the same as above. 
Since $f$ is additive over $\vecj \in \bivsupp$, thus the total contribution of $\bivsupp$ is given by the right-hand side of \eqref{eq:multilin_ident_biv_1}.

Now, for all $\vecj \in \bivsupp$ with $j_1,j_2$ lying in the same set, the contribution of $\phi_{\vecj}$ turns out to be zero. 
Indeed, if $j_1, j_2 \in \calA_1$, the contribution of $\phi_{\vecj}$ is 
\begin{align*}
&\phi_{\vecj} (\vecx_{1,\vecj}) - \phi_{\vecj}(\vecx_{2,\vecj}) - \phi_{\vecj}(\vecx_{3,\vecj}) + \phi_{\vecj}(\vecx_{4,\vecj}) \\
&= \phi_{\vecj} (\indic_{\sampsgn_{j_1}} x_{j_1}, \indic_{\sampsgn_{j_2}} x_{j_2}) - \phi_{\vecj}(\indic_{\sampsgnbar_{j_1}} x_{j_1}, \indic_{\sampsgnbar_{j_2}} x_{j_2}) 
- \phi_{\vecj}(\indic_{\sampsgn_{j_1}} x_{j_1}, \indic_{\sampsgn_{j_2}} x_{j_2}) + \phi_{\vecj}(\indic_{\sampsgnbar_{j_1}} x_{j_1}, \indic_{\sampsgnbar_{j_2}} x_{j_2})= 0.
\end{align*}
The same is easily verified if $j_1, j_2 \in \calA_2$. 
Lastly, let us verify that the contribution of $\phi_p$ for each $p \in \univsupp \cup \bivsuppvar$ is zero. 
Indeed, when $p \in \calA_1$, we get
\begin{align*}
&\phi_{p} (x_{1,p}) - \phi_{p}(x_{2,p}) - \phi_{p}(x_{3,p}) + \phi_{p}(x_{4,p}) = \phi_{p} (\indic_{\sampsgn_{p}} x_{p}) - \phi_{p} (\indic_{\sampsgnbar_{p}} x_{p}) 
- \phi_{p} (\indic_{\sampsgn_{p}} x_{p}) + \phi_{p} (\indic_{\sampsgnbar_{p}} x_{p})= 0
\end{align*}
and the same is true also for $p \in \calA_2$. This completes the proof.
\end{proof}
Denoting $\origspars_{\vecj}(\vecx_{\vecj}) = \phi_{\vecj}(\vecx_{\vecj})$ if $\vecj \in \bivsupp$ and $0$ otherwise, 
let $\origsparsvec(\vecx) \in \matR^{{[d] \choose 2}}$ be the corresponding ($\abs{\bivsupp}$ sparse) vector.
For $\calA\subseteq {[d] \choose 2}$ we denote $\origsparsvec(\vecx;\calA) \in \matR^{{[d] \choose 2}}$ to be the \emph{projection} of $\origsparsvec(\vecx)$ onto $\calA$. 
Clearly $\origsparsvec(\vecx;\calA)$ is at most $\abs{\bivsupp}$ sparse too -- it is in fact $\abs{\bivsupp \cap \calA}$ sparse. 
For a \randvar vector $\sampsgnvec \in \set{-1,+1}^d$, 
let $\betavect \in \set{-1,+1}^{[d] \choose 2}$, where $\betat_{\vecj} = \sampsgn_{j_1}\sampsgn_{j_2}$ for each $\vecj = (j_1,j_2)$. 
Hence we see that \eqref{eq:multilin_ident_biv_1} corresponds to a linear measurement of $\origsparsvec(\vecx;\calA)$ with the 
\randvar vector $\betavect$.

\paragraph{Sampling scheme for identifying $\bivsupp$.} 
We first generate independent \randvar vectors 
$\sampsgnvec_1, \sampsgnvec_2, \allowbreak\dots, \allowbreak \sampsgnvec_n \in \set{-1,1}^d$. 
Then for some fixed $\vecx \in [-1,1]^d$ and a mapping $\hashfn : [d] \rightarrow \set{1,2}$ -- the choice of both to be 
made clear later -- we obtain the samples $\ftil(\vecx_{i,p}) = f(\vecx_{i,p}) + \noise_{i,p}$, $i\in[n]$ and $p\in\{1,2,3,4\}$. 
Here, $\vecx_{i,1}, \vecx_{i,2}, \vecx_{i,3}, \vecx_{i,4}$ are generated using $\vecx, \sampsgnvec_i, \hashfn$ as 
outlined in \eqref{eq:bivsamp_query_pts_1}.
As a direct implication of Lemma \ref{lem:samp_motiv_biv}, we obtain the linear system
\begin{equation} \label{eq:lin_sys_bivspam}
\underbrace{\begin{pmatrix}
  \ftil(\vecx_{1,1}) - \ftil(\vecx_{1,2}) - \ftil(\vecx_{1,3}) + \ftil(\vecx_{1,4})  \\ \vdots \\ \vdots 
	\\ \ftil(\vecx_{n,1}) - \ftil(\vecx_{n,2}) - \ftil(\vecx_{n,3}) + \ftil(\vecx_{n,4})
 \end{pmatrix}}_{\vecy \in \matR^n}
= 
\underbrace{\begin{pmatrix}
  {\betavect_{1}}^T  \\ \vdots \\ \vdots \\ {\betavect_{n}}^T
 \end{pmatrix}}_{\matB \in \matR^{n \times {d \choose 2}}} \origsparsvec(\vecx;\calA) 
+ \underbrace{\begin{pmatrix}
  \noise_{1,1} - \noise_{1,2} - \noise_{1,3} + \noise_{1,4}  \\ \vdots \\ \vdots \\ \noise_{n,1} - \noise_{n,2} - \noise_{n,3} + \noise_{n,4}
 \end{pmatrix}}_{\noisevec \in \matR^n}.
\end{equation}
By feeding $\vecy, \matB$ as input to $\csalg$, we then obtain the estimate $\est{\origsparsvec}(\vecx;\calA)$ to 
$\origsparsvec(\vecx;\calA)$. Assuming $\csalg$ to be $\epsilon$-accurate at $\vecx$, we will have that 
$\norm{\est{\origsparsvec}(\vecx;\calA) - \origsparsvec(\vecx;\calA)}_{\infty} \leq \epsilon$ holds.
Let us mention that $\calA$ from Lemma \ref{lem:samp_motiv_biv} is completely determined by $\hashfn$ but we avoid
denoting this explicitly for clarity of notation.

At this point, it is natural to ask, how one should choose $\vecx$ and the mapping $\hashfn$. 
To this end, we borrow the approach of \cite{Devore2011}, which involves choosing $\hashfn$ 
from a family of hash functions, and creating for each $\hashfn$ in the family a uniform grid.
To begin with, we introduce the following definition of a family of hash functions.
\begin{definition} \label{def:thash_fam}
For some $t \in \mathbb{N}$ and $j=1,2,\dots$, let $h_j : [d] \rightarrow \set{1,2,\dots,t}$.
We call the \jvyb{family of hash functions $\thashfam = (\hashfn_1,\hashfn_2,\dots)$} a $(\dimn,t)$-hash family 
if for any distinct $i_1,i_2,\dots,i_t \in [d]$, there exists $\hashfn \in \thashfam$ such that $h$ is an 
injection when restricted to $i_1,i_2,\dots,i_t$.
\end{definition}
Hash functions are commonly used in theoretical computer science and are widely used in finding juntas \cite{Mossel03}.
One can construct $\thashfam$ of size $O(t e^t \log \dimn)$ using a standard probabilistic argument. 
The reader is for instance referred to Section $5$ in \cite{Devore2011}, where for any constant $C_1 > 1$
the probabilistic construction yields $\thashfam$ of size $\abs{\thashfam} \leq (C_1 + 1)t e^t \log \dimn$ 
with probability at least $1 - \dimn^{-C_1 t}$, in time linear in the output size.

Focusing on the setting $t = 2$ now, say we have at hand a family $\twohashfam$ of size $O(\log d)$.
Then for any $(i,j) \in {[d] \choose 2}$, there exists $\hashfn \in \twohashfam$ so that 
$\hashfn(i) \neq \hashfn(j)$. 
For each $\hashfn \in \twohashfam$, let us define $\canvec_1(\hashfn), \canvec_2(\hashfn) \in \matR^{\dimn}$, where
\begin{equation*}
(\canvec_i(\hashfn))_q := \left\{
\begin{array}{rl}
1 \ ; & \hashfn(q) = i, \\
0 \ ; & \text{otherwise}
\end{array} \right .  \quad \text{for} \ i=1,2 \ \text{and} \ q\in[\dimn].
\end{equation*}
Then we create a two dimensional grid with respect to $\hashfn$
\begin{equation} \label{eq:baseset_hash}
\baseset(\hashfn) := \biggr\{\vecx \in [-1,1]^{\dimn}: \vecx =  c_1 \canvec_1(\hashfn)+c_2 \canvec_2(\hashfn); c_1,c_2 \in
\Bigl\{-1,-\frac{\numbasepts-1}{\numbasepts},\dots,\frac{\numbasepts-1}{\numbasepts},1\Bigr\}\biggr\}.
\end{equation}
Equipped with $\baseset(\hashfn)$ for each $\hashfn \in \twohashfam$, we now possess the following approximation 
property. For any $\vecj \in {[d] \choose 2}$ and any $(x^{*}_{j_1},x^{*}_{j_2}) \in [-1,1]^2$, there exists 
$\hashfn \in \twohashfam$ with $h(j_1)\not=h(j_2)$ and a corresponding $\vecx \in \baseset(\hashfn)$ so that 
$\abs{x^{*}_{j_1} - x_{j_1}}, \abs{x^{*}_{j_2} - x_{j_2}} \leq 1/m$. 

Informally speaking, our idea is the following. Assume that $\csalg$ is $\epsilon$-accurate for each 
$\hashfn \in \twohashfam$, $\vecx \in \baseset(\hashfn)$. Also, say $\numbasepts$, $\epsilon$ are sufficiently large 
and small respectively. Hence, if we estimate $\origsparsvec(\vecx;\calA)$ at each 
$\hashfn \in \twohashfam$ and $\vecx \in \baseset(\hashfn)$, 
then for every $\vecj \in \bivsupp$, we are guaranteed to have a point $\vecx$ at which the 
estimate $\abs{\est{\origspars_{\vecj}}(\vecx_{\vecj})}$ is sufficiently large. Moreover, 
for every $\vecj \not\in \bivsupp$, we would always (i.e., for each $\hashfn \in \twohashfam$ and $\vecx \in \baseset(\hashfn)$) 
have $\abs{\est{\origspars_{\vecj}}(\vecx_{\vecj})}$ sufficiently small;  
more precisely, $\abs{\est{\origspars_{\vecj}}(\vecx_{\vecj})} \leq \epsilon$ 
since $\phi_{\vecj} \equiv 0$. Consequently, we will be able to identify $\bivsupp$ by thresholding, via a 
suitable threshold.

\paragraph{Sampling scheme for identifying $\univsupp$.} 
Assuming $\bivsupp$ is identified, the model \eqref{eq:mod_unique_bivar} reduces 
to the univariate case on the reduced set $\calP := [d] \setminus \bivsuppvar$ with $\univsupp \subset \calP$. 
We can therefore apply Algorithm \ref{algo:univ_spam} on 
$\calP$ by setting the coordinates in $\calP^{c}=\est{\bivsuppvar}$ to zero. 
Indeed, we first construct for some $m \in \matN$ the following set
%
%
\begin{equation} \label{eq:samp_restr_univ_base_pts}
\baseset = \Bigl\{(c \ c \ \dots \ c)^T \in \matR^{\calP}: 
c \in \biggl\{-1,-\frac{\numbasepts-1}{\numbasepts},\dots,\frac{\numbasepts-1}{\numbasepts},1\Bigr\}\biggr\} \subset [-1,1]^{\calP}.
\end{equation}
%
Then, for any given $\sampsgnvec \in \set{-1,1}^{\calP}$, and $\vecx \in \baseset$, 
we construct $\xposvec$, $\xnegvec \in \matR^d$ using $\sampsgnvec,\vecx$ as follows
%
\begin{equation} \label{eq:univsamp_restr_query_pts}
\xpos_i = \left\{
\begin{array}{rl}
x_i \ ; & \ \sampsgn_i = +1 \ \text{and} \ i \in \calP, \\
0 \ ; &  \text{otherwise},
\end{array} \right. \quad 
\xneg_i = \left\{
\begin{array}{rl}
x_i \ ; & \ \sampsgn_i = -1 \ \text{and} \ i \in \calP, \\
0 \ ; & \text{otherwise},
\end{array} \right.  \quad i \in[d].
\end{equation}
%
Note that $\xpos_i, \xneg_i = 0$ for $i \notin \calP$. 
Then, similarly to \eqref{eq:univspam_samp_ident}, 
we have that 
\begin{align} 
\ftil(\xposvec) - \ftil(\xnegvec) 
= \sum_{i \in \calP} \sampsgn_i\underbrace{\phi_i(x_i)}_{\origspars_i(x_i)} + \noise^{+} - \noise^{-}  
= \dotprod{\sampsgnvec}{\origsparsvec_{\calP}(\vecx)} + \noise^{+} - \noise^{-}, \label{eq:bivuniv_ident} 
\end{align} 
where $\origsparsvec_{\calP}(\vecx) \in \matR^{\calP}$ is the restriction of $\origsparsvec(\vecx)$ onto $\calP$, 
and is $\abs{\univsupp}$ sparse. 
Thereafter, we proceed as in Algorithm \ref{algo:univ_spam} by forming a linear system as in \eqref{eq:lin_sys_univspam} 
(where now $\matB \in {\matR^{n \times \abs{\calP}}}$) at each $\vecx \in \baseset$, and employing an $\epsilon$-accurate  
$\csalg$ to estimate $\origsparsvec_{\calP}(\vecx)$.
\paragraph{Algorithm outline and guarantees.} Our scheme for identifying $\bivsupp$ is 
outlined formally as the first part of Algorithm \ref{algo:biv_spam}. The second part 
involves the estimation of $\univsupp$.
%
\begin{algorithm*}[!ht]
\caption{Algorithm for estimating $\bivsupp,\univsupp$} \label{algo:biv_spam} 
\begin{algorithmic}[1] 
\State \textbf{Input:} $d$, $\abs{\bivsupp}$, $m_2$, $n_2$, $\epsilon_2$. 
\hfill \textsc{// Estimation of } $\bivsupp$ 
\State \textbf{Initialization:} $\est{\bivsupp} = \emptyset$.
\State \textbf{Output:}  $\est{\bivsupp}$.\\
\hrulefill

\State Generate independent \randvar vectors $\sampsgnvec_1, \sampsgnvec_2, \dots, \sampsgnvec_{n_2}\in\{-1,+1\}^d$.  \label{algobivspam:gen_betas}

\State Form $\matB \in \matR^{n_2 \times {d \choose 2}}$ as in \eqref{eq:lin_sys_bivspam}. \label{algobivspam:gen_B}

\State Construct a $(d,2)$ hash family: $\twohashfam$.

\For{$\hashfn \in \twohashfam$}
\State Construct $\baseset(\hashfn)$ as defined in \eqref{eq:baseset_hash} with $\abs{\baseset(\hashfn)} = (2m_2+1)^2$. \label{algobivspam:base_constr}

\For{$\vecx \in \baseset(\hashfn)$}

	\State Generate $\vecx_{i,1}, \vecx_{i,2}, \vecx_{i,3}, \vecx_{i,4} \in [-1,1]^d$, 
	as in \eqref{eq:bivsamp_query_pts_1}, using $\vecx, \sampsgnvec_i$ for each $i \in [n_2]$. 
	\label{algobivspam:gen_xts}
	
	\State Using the samples $(\ftil(\vecx_{i,1}), \ftil(\vecx_{i,2}), \ftil(\vecx_{i,3}), \ftil(\vecx_{i,4}))_{i=1}^{n_2}$, 
	form $\vecy$ as in \eqref{eq:lin_sys_bivspam}. \label{algobivspam:query_f}

	\State Obtain $\est{\origsparsvec}(\vecx;\calA) = \csalg_2(\vecy, \matB)$. \label{algobivspam:sparse_rec}
			
	\State Update $\est{\bivsupp} = \est{\bivsupp} \cup \set{\vecj \in \calA: \abs{\est{\origspars_{\vecj}}(\vecx_{\vecj})} > \epsilon_2}$. \label{algobivpam:update_bivsupp}

\EndFor
\EndFor \\

\hrulefill

\State \textbf{Input:} $d$, $\abs{\univsupp}$, $\est{\bivsuppvar}$, $m_1$, $n_1$, $\epsilon_1$. \hfill \textsc{// Estimation of} $\univsupp$
\State \textbf{Initialization:} $\est{\univsupp} = \emptyset$, $\calP = [d] \setminus \est{\bivsuppvar}$.
\State \textbf{Output:} $ \est{\univsupp}$. 

\State Construct $\baseset \subset [-1,1]^{\calP}$ with $\abs{\baseset} = 2m_1+1$, as in \eqref{eq:samp_restr_univ_base_pts}.  

\State Generate independent \randvar vectors $\sampsgnvec_1, \sampsgnvec_2, \dots, \sampsgnvec_{n_1}\in\{-1,+1\}^{\calP}$. 

\State Form $\matB \in \matR^{n_1 \times \abs{\calP}}$ as in \eqref{eq:lin_sys_univspam}.  

\For{$\vecx \in \baseset$}

	\State Generate $\xposvec_i, \xnegvec_i \in [-1,1]^d$, as in \eqref{eq:univsamp_restr_query_pts}, using $\vecx, \sampsgnvec_i$ for each $i \in [n_1]$. 
	 	
	\State Using the samples $(\ftil(\xposvec_i), \ftil(\xnegvec_i))_{i=1}^{n_1}$, form $\vecy$ as in \eqref{eq:lin_sys_univspam}.  
	
	\State Obtain $\est{\origsparsvec}(\vecx) = \csalg_1(\vecy, \matB)$ where $\est{\origsparsvec}(\vecx) \in \matR^{\calP}$. 
			
	\State Update $\est{\univsupp} = \est{\univsupp} \cup \set{p \in \calP: \abs{(\est{\origsparsvec}(\vecx))_p} > \epsilon_1}$.  
\EndFor

\end{algorithmic}
\end{algorithm*}
Lemma \ref{lemma:biv_spam_algo} provides exact recovery guarantees for $\bivsupp$ and $\univsupp$ by Algorithm \ref{algo:biv_spam}.
\begin{lemma} \label{lemma:biv_spam_algo}
Let $\twohashfam$ be a $(d,2)$ hash family, and let $\csalg_2$ be $\epsilon_2$-accurate for 
each $\hashfn \in \twohashfam$, $\vecx \in \baseset(\hashfn)$ with $\epsilon_2 < \idenconst_2/3$, which uses $n_2$ linear measurements.
If $m_2 \geq \sqrt{2}\bigl(\frac{3L}{\idenconst_2}\bigr)^{1/\alpha}$, then 
Algorithm \ref{algo:biv_spam} recovers $\bivsupp$ exactly, i.e., $\est{\bivsupp} = \bivsupp$. 
Moreover, assuming $\est{\bivsupp} = \bivsupp$ holds, and $\csalg_1$ is $\epsilon_1$-accurate (using $n_1$ measurements),
then if $m_1,n_1,\epsilon_1$ satisfy the conditions of Lemma \ref{lemma:univ_spam_algo}, we have $\est{\univsupp} = \univsupp$. 
Lastly, the total number of queries of $f$ made is $4(2m_2 + 1)^2 n_2 \abs{\twohashfam} + 2(2m_1 + 1) n_1$.
\end{lemma}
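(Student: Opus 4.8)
The plan is to mirror the proof of Lemma \ref{lemma:univ_spam_algo}, handling the two stages of Algorithm \ref{algo:biv_spam} in turn and then tallying the queries. For the first stage I must show that thresholding the estimates $\est{\origspars_{\vecj}}(\vecx_{\vecj})$ at level $\epsilon_2$ returns exactly $\bivsupp$, i.e.\ no false negatives and no false positives. For the \emph{no false negatives} direction, fix $\vecj = (j_1,j_2) \in \bivsupp$ with identifiability point $\vecx^{*}_{\vecj} \in [-1,1]^2$, so that $\abs{\phi_{\vecj}(\vecx^{*}_{\vecj})} > \idenconst_2$. Since $\twohashfam$ is a $(d,2)$-hash family, there is some $\hashfn \in \twohashfam$ with $\hashfn(j_1) \neq \hashfn(j_2)$; hence $\vecj$ lies in the separated index set $\calA$ of Lemma \ref{lem:samp_motiv_biv} for this $\hashfn$, so that coordinate is actually being estimated. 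By the approximation property of $\baseset(\hashfn)$ recorded after \eqref{eq:baseset_hash}, there is a $\vecx \in \baseset(\hashfn)$ with $\abs{x^{*}_{j_1} - x_{j_1}}, \abs{x^{*}_{j_2} - x_{j_2}} \leq 1/m_2$, whence $\norm{\vecx^{*}_{\vecj} - \vecx_{\vecj}}_2 \leq \sqrt{2}/m_2$. Combining the $\epsilon_2$-accuracy of $\csalg_2$ at this $\vecx$ with the reverse triangle inequality, Hölder smoothness, and identifiability gives
\begin{align*}
\abs{\est{\origspars_{\vecj}}(\vecx_{\vecj})}
&\geq \abs{\phi_{\vecj}(\vecx^{*}_{\vecj})} - \abs{\phi_{\vecj}(\vecx^{*}_{\vecj}) - \phi_{\vecj}(\vecx_{\vecj})} - \epsilon_2 \geq \idenconst_2 - L\Bigl(\frac{\sqrt{2}}{m_2}\Bigr)^{\alpha} - \epsilon_2.
\end{align*}
The choice $m_2 \geq \sqrt{2}(3L/\idenconst_2)^{1/\alpha}$ forces $L(\sqrt{2}/m_2)^{\alpha} \leq \idenconst_2/3$, so the right-hand side is at least $2\idenconst_2/3 - \epsilon_2 > \epsilon_2$ as $\epsilon_2 < \idenconst_2/3$; thus $\vecj$ is added to $\est{\bivsupp}$.

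For the \emph{no false positives} direction, any $\vecj \notin \bivsupp$ has $\origspars_{\vecj} \equiv 0$ by definition, so $\epsilon_2$-accuracy yields $\abs{\est{\origspars_{\vecj}}(\vecx_{\vecj})} = \abs{\est{\origspars_{\vecj}}(\vecx_{\vecj}) - \origspars_{\vecj}(\vecx_{\vecj})} \leq \epsilon_2$ at every $\hashfn \in \twohashfam$ and every $\vecx \in \baseset(\hashfn)$, so such a $\vecj$ is never added. Together these two directions give $\est{\bivsupp} = \bivsupp$, and hence $\est{\bivsuppvar} = \bivsuppvar$ and $\calP = [d] \setminus \bivsuppvar$.

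For the second stage I reduce to the already-proved univariate case. On the reduced index set $\calP$ all bivariate contributions vanish: setting the coordinates in $\bivsuppvar = \calP^{c}$ to zero kills every $\phi_{\vecj}$, $\vecj \in \bivsupp$ (both endpoints lie in $\bivsuppvar$, so condition (c) of Proposition \ref{prop:mod_unique_bivar} applies) as well as every $\phi_l$, $l \in \bivsuppvar$ (since $\phi_l(0) = 0$). Hence \eqref{eq:bivuniv_ident} reduces to the univariate identity \eqref{eq:univspam_samp_ident} restricted to $\calP$, and the second stage is precisely Algorithm \ref{algo:univ_spam} run on $\calP$ with $\univsupp \subset \calP$; invoking Lemma \ref{lemma:univ_spam_algo} under its stated hypotheses on $m_1,n_1,\epsilon_1$ gives $\est{\univsupp} = \univsupp$. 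For the query count, the first stage queries $f$ at the four points $\vecx_{i,1},\dots,\vecx_{i,4}$ for each of the $n_2$ sign vectors, each of the $(2m_2+1)^2$ grid points in $\baseset(\hashfn)$, and each $\hashfn \in \twohashfam$, contributing $4(2m_2+1)^2 n_2 \abs{\twohashfam}$; the second stage queries the two points $\xposvec_i,\xnegvec_i$ for each of $n_1$ sign vectors and each of $2m_1+1$ grid points, contributing $2(2m_1+1)n_1$, giving the claimed total.

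The main obstacle I anticipate is the no-false-negatives argument: one must simultaneously secure (i) a hash function separating $j_1$ and $j_2$, so that $\vecj$ is actually an estimated coordinate, and (ii) a grid point \emph{of that same hash function's grid} close to $\vecx^{*}_{\vecj}$. Both are available because the hash-family and grid constructions were designed in tandem, but it is this coupling — together with the $\sqrt{2}$ factor coming from the $\ell_2$-diameter of a $2$-dimensional grid cell, which is exactly what the $\sqrt{2}$ in the bound on $m_2$ absorbs — that distinguishes the bivariate analysis from the univariate one.
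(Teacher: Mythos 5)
Your proof is correct and follows essentially the same route as the paper's: use the hash family to find an $\hashfn$ injective on $\vecj$, the grid $\baseset(\hashfn)$ to get within $\sqrt{2}/m_2$ of the identifiability point, then combine H\"older smoothness, $\epsilon_2$-accuracy, and the reverse triangle inequality to show true pairs exceed the threshold while non-pairs stay below $\epsilon_2$, and finally reduce the second stage to Lemma \ref{lemma:univ_spam_algo}. The only difference is cosmetic — you spell out the reduction to the univariate case and the query tally, which the paper declares "identical" and omits — so nothing further is needed.
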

\begin{proof}
For any given $\vecj \in \bivsupp$ there exists 
$\vecx^{*}_{\vecj} \in [-1,1]^2$ with $\abs{\phi_{\vecj}(\vecx^{*}_{\vecj})} \geq \idenconst_2$. 
Moreover, since $\twohashfam$ is a $(d,2)$ hash family, there exists $\hashfn \in \twohashfam$ that is an injection on $\vecj$. 
Consequently, there exists $\vecx \in \baseset(\hashfn)$ such that $\norm{\vecx_{\vecj} - \vecx^{*}_{\vecj}}_2 \leq  \frac{\sqrt{2}}{m_2}$.
This in turn implies by H\"older continuity of $\phi_{\vecj}$ that
\begin{align}
\abs{\phi_{\vecj}(\vecx_{\vecj}) - \phi_{\vecj}(\vecx^{*}_{\vecj})} \leq L \frac{2^{\alpha/2}}{m_2^{\alpha}}. \label{eq:biv_proof_1}
\end{align}
Since $\csalg_2$ is $\epsilon_2$-accurate for each $\hashfn \in \twohashfam$, $\vecx \in \baseset(\hashfn)$, we know that 
at the aforementioned $\vecx$, the following holds via reverse triangle inequality
\begin{equation}
\abs{\est{\origspars_{\vecj}}(\vecx_{\vecj})} \geq \abs{\phi_{\vecj}(\vecx_{\vecj})} - \epsilon_2.  \label{eq:biv_proof_2}
\end{equation}
Using \eqref{eq:biv_proof_1}, \eqref{eq:biv_proof_2} and the reverse triangle inequality, we get by the choice of $\epsilon_2$ and $m_2$
\begin{align*}
\abs{\est{\origspars_{\vecj}}(\vecx_{\vecj})} 
&\geq \abs{\phi_{\vecj}(\vecx^{*}_{\vecj})} - L \frac{2^{\alpha/2}}{m_2^{\alpha}} - \epsilon_2 
\geq \idenconst_2 - L \frac{2^{\alpha/2}}{m_2^{\alpha}} - \epsilon_2 
\geq \frac{2\idenconst_2}{3} - L \frac{2^{\alpha/2}}{m_2^{\alpha}}
\geq \frac{\idenconst_2}{3}.
\end{align*}
Also, for any $\vecj \notin \bivsupp$, we have for all $\hashfn \in \twohashfam$, $\vecx \in \baseset(\hashfn)$ that  
$\abs{\est{\origspars_{\vecj}}(\vecx_{\vecj})} \leq \epsilon_2 < \idenconst_2/3$ (since $\phi_{\vecj} \equiv 0$). 
Hence, the stated choice of $\epsilon_2$ guarantees identification of each $\vecj \in \bivsupp$, 
and none from ${[d] \choose 2} \setminus \bivsupp$. 
The proof for recovery of $\univsupp$ is identical to Lemma \ref{lemma:univ_spam_algo}, and hence omitted.
\end{proof}
%
\begin{remark}
On a top level, Algorithm \ref{algo:biv_spam} is similar to \cite[Algorithms 3,4]{Tyagi_spamint_long16} in the sense 
that they all involve solving $\ell_1$ minimization problems at base points lying in $\baseset(\hashfn)$ defined 
in \eqref{eq:baseset_hash} (for identification of $\bivsupp$), and $\baseset$ defined in \eqref{eq:samp_restr_univ_base_pts} 
(for identification of $\univsupp$). The difference however lies in the nature of the sampling schemes. The scheme in 
\cite[Algorithms 3,4]{Tyagi_spamint_long16} relies on estimating sparse Hessians, gradients of $f$ via 
their linear measurements, through random samples in the neighborhood of the base point. In contrast, 
the sampling scheme in Algorithm \ref{algo:biv_spam} is not local; for instance during the identification of $\bivsupp$, 
at each base point $\vecx \in \baseset(\hashfn)$, 
the points $\vecx_{i,1}, \vecx_{i,2}, \vecx_{i,3}, \vecx_{i,4}$ for any given $i \in [n_2]$ can be arbitrarily far from each other.
The same is true during the identification of $\univsupp$.
\end{remark}
%
%
%
%
\section{The multivariate case} \label{sec:spam_multiv}
Finally, we treat also the general case where $f$ consists of at most $\order$-variate components, where $r_0 > 2$ is possible.
To begin with, let $\totsupp_1, \totsupp_2,\dots,\totsupp_{\order}$ be such that $\totsupp_{r} \subset {[d] \choose r}$
for $r \in [\order]$. Here $\totsupp_r$ represents the $r$ wise interaction terms.
We now need some additional notation. 
\begin{enumerate}
\item For $r \ge 1$, let $\totsupp_r^{(1)}$ denote the set of variables occurring in $\totsupp_r$\jvyb{,}
with $\totsupp_1^{(1)}=\totsupp_1$. \jvyb{Hence, $\totsupp_r^{(1)}=\bivsuppvar$ in the bivariate case $r=2$.}

\item For each $1\le i < r \leq \order$, denote $\totsupp_r^{(i)} = {\totsupp_r^{(1)} \choose i}$ 
to be the sets of $i^{th}$ order tuples induced by $\totsupp_r$. 
\end{enumerate}

The multivariate analogue of Proposition \ref{prop:mod_unique_bivar} is provided by the following result.
\begin{proposition}\label{prop:mod_unique_multivar}
Let $1 \le \order \le d$ and let $f\in C([-1,1]^d)$ be of the form
\begin{align} \label{eq:mod_unique_multivar}
f(\vecx) = \modmean &+ \sum_{j \in \bigcup_{r=2}^{\order}\totsupp_{r}^{(1)} \cup \univsupp} \phi_{j}(x_{j}) 
+ \sum_{\vecj \in \bigcup_{r=3}^{\order}\totsupp_{r}^{(2)} \cup \bivsupp} \phi_{\vecj}(x_{\vecj}) \nonumber \\
&\qquad
+ \dots 
+ \sum_{\vecj \in \totsupp_{\order}^{(\order-1)} \cup \totsupp_{\order-1}} \phi_{\vecj}(x_{\vecj}) 
+ \sum_{\vecj \in \totsupp_{\order}} \phi_{\vecj}(x_{\vecj}),
\end{align}
where all the functions $\phi_{j}$ are not identically zero. Moreover, let 
\begin{enumerate}
\item[(a)] $\modmean=f(0)$.

\item[(b)] For each $1 \leq l \leq \order-1$, $\phi_{\vecj}(x_\vecj)=0$ if $\vecj=(j_1,\dots,j_l) \in \bigcup_{r=l+1}^{\order}\totsupp_{r}^{(l)} \cup \totsupp_l$, 
and $x_{j_i}=0$ for some $i \in [l].$

\item[(c)] $\phi_j(0)=0$ if $\vecj = (j_1,\dots,j_{\order}) \in \totsupp_{\order}$, and $x_{j_i} = 0$ for 
some $i \in [\order]$.
\end{enumerate}
Then the representation \eqref{eq:mod_unique_multivar} of $f$ is unique in the sense that 
each component in \eqref{eq:mod_unique_multivar} is uniquely identifiable.
\end{proposition}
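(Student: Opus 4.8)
The plan is to prove uniqueness by induction on $\order$, reducing the general multivariate case to the already-established Anchored-ANOVA uniqueness of Proposition \ref{lem:anova}. The key observation is that \eqref{eq:mod_unique_multivar} is nothing but a regrouping of an Anchored-ANOVA decomposition of $f$, where the components $\phi_{\vecj}$ are indexed by tuples $\vecj$ ranging over the various $\totsupp_r^{(l)}$ and $\totsupp_l$. Since Proposition \ref{lem:anova} already guarantees that the collection $(f_U)_{U\subseteq[d]}$ is \emph{the unique} system of continuous functions satisfying the representation \eqref{eq:anova1} together with the vanishing property $f_U(\vecx_U)=0$ whenever $x_j=0$ for some $j\in U$, it suffices to show that each $\phi_{\vecj}$ in \eqref{eq:mod_unique_multivar} must coincide with the corresponding ANOVA-component $f_{U}$ for the set $U$ corresponding to the tuple $\vecj$.

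The main structural point I would establish first is that the conditions (a), (b), (c) are precisely the Anchored-ANOVA vanishing conditions (item c) of Proposition \ref{lem:anova}) restricted to the relevant index sets. Concretely, for a tuple $\vecj=(j_1,\dots,j_l)$ appearing in the $l$-variate block, condition (b) states $\phi_{\vecj}(\vecx_\vecj)=0$ whenever some $x_{j_i}=0$, which is exactly the anchored vanishing property for the set $U=\{j_1,\dots,j_l\}$. Thus every term $\phi_{\vecj}$ in \eqref{eq:mod_unique_multivar} carries the anchoring, and the sum over all these terms plus $\modmean=f(0)$ is a representation of $f$ of the form \eqref{eq:anova1} in which only the components indexed by sets $U$ lying in the union of all $\totsupp_r^{(l)}$'s and $\totsupp_l$'s are possibly nonzero, with all remaining $f_U\equiv 0$. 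Because Proposition \ref{lem:anova} asserts the representation is unique under these very conditions, each $\phi_{\vecj}$ is forced to equal the unique ANOVA-component $f_U$, which proves identifiability.

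I would organize the argument as follows. First I would fix the product measure to be the Dirac anchoring $d\mu_j(x_j)=\delta(x_j)dx_j$, so that Proposition \ref{lem:anova} yields the Anchored-ANOVA decomposition with its uniqueness. Next I would verify that $f\in C([-1,1]^d)$ together with the H\"older smoothness of each $\phi_{\vecj}$ places every component in $C([-1,1]^{|U|})$, so condition a) of Proposition \ref{lem:anova} holds. Then I would check that grouping the terms of \eqref{eq:mod_unique_multivar} by the underlying index set $U$ (noting that a given set of size $l$ may receive contributions both from a genuine $l$-wise interaction in $\totsupp_l$ and from induced tuples $\totsupp_r^{(l)}$ for $r>l$) produces exactly one continuous function per set $U$, satisfying the vanishing condition. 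Invoking the uniqueness clause of Proposition \ref{lem:anova}, I would conclude each grouped component equals $f_U$, and hence each individual $\phi_{\vecj}$ is uniquely determined.

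The main obstacle I anticipate is the bookkeeping in the grouping step: the blocks in \eqref{eq:mod_unique_multivar} overlap in their variable-index sets, since a fixed size-$l$ subset of $[d]$ can arise simultaneously as a term of $\totsupp_l$ and as an induced lower-order tuple $\totsupp_r^{(l)}$ coming from several higher-order interaction sets $\totsupp_r$. I would need to argue carefully that after summing all contributions sharing the same underlying set $U$, the result is still a single well-defined continuous function satisfying the anchored vanishing property, and then that matching it against the unique $f_U$ does not mix distinct $\phi_{\vecj}$'s in an ambiguous way. The disjointness assumption on the variable sets $\totsupp_r^{(1)}$ stated in the introduction should be what prevents genuine ambiguity here, and I would need to invoke it explicitly to guarantee that each tuple $\vecj$ is associated to a unique block, so that the correspondence between $\phi_{\vecj}$ and $f_U$ is one-to-one. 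Once this combinatorial correspondence is pinned down, the uniqueness itself follows immediately from Proposition \ref{lem:anova} with no further analytic work.
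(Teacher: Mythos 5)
Your proof is correct, but it takes a genuinely shorter route than the paper's. The paper proves this proposition by mimicking the proof of Proposition \ref{prop:mod_unique_bivar}: one computes the anchored-ANOVA components $f_U$ of $f$ explicitly from the inclusion--exclusion formula \eqref{eq:anova_comp_exp}, substitutes the representation \eqref{eq:mod_unique_multivar}, and uses the anchoring conditions (a)--(c) to show that every alternating sum collapses to $\phi_{\vecj}$ when $U$ corresponds to a tuple $\vecj$ of the model, and to $0$ otherwise. You instead read \eqref{eq:mod_unique_multivar} directly as a system $(g_U)_{U\subseteq[d]}$ --- setting $g_\emptyset=\modmean$, $g_U=\phi_{\vecj}$ for tuples $\vecj$ appearing in the model, and $g_U\equiv 0$ for all other $U$ --- verify that this system satisfies conditions a)--c) of Proposition \ref{lem:anova}, and invoke its uniqueness clause to conclude $g_U=f_U$ for every $U$. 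This is cleaner: it makes the multivariate case literally no harder than the bivariate one, whereas the paper's computational route gets combinatorially heavier as $\order$ grows (which is presumably why it is ``left to the reader''). What the computational route buys in exchange is that it needs no continuity hypothesis on the individual components: continuity of each $\phi_{\vecj}$ falls out of the identity $\phi_{\vecj}=f_U$, since the $f_U$ are alternating sums of restrictions of the continuous $f$. Your route, by contrast, must assume the components continuous to meet condition a) of Proposition \ref{lem:anova}; you justify this via H\"older smoothness, which is an assumption of the learning setup but not of the proposition as stated, so you should either make component continuity a standing hypothesis or recover it from the alternating-sum identity. Finally, your main anticipated obstacle is a non-issue: the sums in \eqref{eq:mod_unique_multivar} run over \emph{unions} of tuple sets, so each tuple indexes exactly one component and the correspondence $\vecj\leftrightarrow U$ is automatically one-to-one without any grouping step; consequently, disjointness of the sets $\totsupp_r^{(1)}$ is not needed for uniqueness at all (it matters for the recursive structure of Algorithm \ref{algo:multiv_spam}, not here).
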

The proof of this result is similar to the proof of Proposition \ref{prop:mod_unique_bivar}, and we leave it to the reader.
\begin{remark}
Let us note that for the special cases $\order\in\{1,2\}$, the statement of Proposition \ref{prop:mod_unique_multivar} reduces to 
that of Proposition \ref{prop:mod_unique_bivar} for univariate/bivariate SPAMs.
\end{remark}
%
%
%
%
We now generalize the sampling scheme given before for bivariate components to the setting of multivariate components.
\jvyb{For this sake, we denote by $\digit(a,b)\in\{0,1\}$ the $b^{th}$ digit of the dyadic decomposition of $a$ for $a,b\in\matN_0$.
and put $\digit(a)$ to be the sum of digits of $a\in\matN_0$, i.e.
$$
a=\sum_{i=0}^\infty \digit(a,i)\,\cdot\,2^i,\qquad \digit(a)=\sum_{i=0}^\infty \digit(a,i).
$$
Let us fix some mapping $\hashfn: [d] \rightarrow [\order]$ that partitions 
$[d]$ into $\calA_1 = \set{i \in [d]: \hashfn(i) = 1}$, $\calA_2 = \set{i \in [d]: \hashfn(i) = 2},\dots,\calA_{\order}=\set{i\in[d]:\hashfn(i)=\order}$.
Let us fix a \randvar vector $\sampsgnvec \in \set{-1,1}^d$ and $\vecx = (x_1\dots x_d)^T \in [-1,1]^d$.
For $z\in[2^{\order}]$ and $i\in[d]$, we define
\begin{equation} \label{eq:multiv_xsamp_pts}
(\vecx_z)_i=x_{z,i}=\begin{cases} x_i\quad\text{if }\beta_i=(-1)^{\digit(z-1,h(i)-1)},\\
0\quad\text{otherwise}.
\end{cases}
\end{equation}}
%
%
%
%

\begin{remark} It $r_0=1$, it is easily verified, that the points $(\vecx_z)_{z=1}^2$ in \eqref{eq:multiv_xsamp_pts}
coincide with the points $\xposvec,\xnegvec$ defined in \eqref{eq:univsamp_query_pts} for univariate SPAMs.
Similarly, for $\order = 2$, the points $(\vecx_z)_{z=1}^4$ from \eqref{eq:multiv_xsamp_pts} agree with
those defined in \eqref{eq:bivsamp_query_pts_1} for bivariate SPAMs.
In the same way, the following lemma is a generalization of \eqref{eq:univspam_samp_ident}
and Lemma \ref{lem:samp_motiv_biv} for \eqref{eq:mod_unique_multivar}.
Indeed, if $r_0=1$, there is only one mapping $h : [d] \rightarrow \{1\}$, and so $\calA = [d]$.
\end{remark}

%
%
\begin{lemma} \label{lem:samp_motiv_multiv}
Denote $\calA = \set{\vecj \in {[d] \choose \order}: \hashfn \text{ is injective on } \set{j_1,\dots,j_{\order}} }$.
Then for functions $f$ of the form \eqref{eq:mod_unique_multivar}, we have that 
\begin{align}
\sum_{z=1}^{2^{\order}} (-1)^{\digit(z-1)}f(\vecx_z)
= \sum_{\vecj \in \calA \cap \totsupp_{\order}} \beta_{j_1}\dots \beta_{j_{\order}} \phi_{\vecj} (\vecx_{\vecj}). \label{eq:multilin_ident_multiv_2}
\end{align}
\end{lemma}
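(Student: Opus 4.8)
The plan is to prove \eqref{eq:multilin_ident_multiv_2} by showing that the alternating sum annihilates every component $\phi_{\vecj}$ of the decomposition \eqref{eq:mod_unique_multivar} \emph{except} those genuine $\order$-variate terms $\vecj\in\totsupp_{\order}$ on which $\hashfn$ is injective, and that each surviving term contributes exactly $\beta_{j_1}\cdots\beta_{j_{\order}}\phi_{\vecj}(\vecx_{\vecj})$. Since $f$ is additive and the left-hand side is linear in $f$, it suffices to compute the contribution of a single $\phi_{\vecj}$ with $\vecj$ an $l$-tuple (for arbitrary $1\le l\le\order$) and sum over all components. So the first step is to fix $\vecj=(j_1,\dots,j_l)$ and examine $\sum_{z=1}^{2^{\order}}(-1)^{\digit(z-1)}\phi_{\vecj}(\vecx_{z,\vecj})$.

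The key observation is to unpack the definition \eqref{eq:multiv_xsamp_pts}: the $i$-th coordinate of $\vecx_z$ equals $x_i$ precisely when $\beta_i=(-1)^{\digit(z-1,h(i)-1)}$, i.e.\ it depends on $z$ only through the single dyadic digit $\digit(z-1,h(i)-1)$ indexed by the block $h(i)$. Writing $z-1$ in binary as $(b_0,b_1,\dots,b_{\order-1})$, I would reindex the sum over $z\in[2^{\order}]$ as a sum over $(b_0,\dots,b_{\order-1})\in\{0,1\}^{\order}$, and note $(-1)^{\digit(z-1)}=\prod_{k=0}^{\order-1}(-1)^{b_k}$, so the whole sum factorizes as a product over the $\order$ blocks. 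For the fixed component $\phi_{\vecj}$, its argument $\vecx_{z,\vecj}$ depends only on those blocks $k$ for which some variable $j_t$ lands in $\calA_{k+1}$, i.e.\ $h(j_t)=k+1$; the remaining blocks appear only through the sign factor $(-1)^{b_k}$, and summing $\sum_{b_k\in\{0,1\}}(-1)^{b_k}=0$ kills the entire contribution. Hence the contribution of $\phi_{\vecj}$ vanishes unless \emph{every} block index $k+1$ is ``hit'' by some $j_t$, which (given that $\vecj$ has only $l\le\order$ entries and there are $\order$ blocks) forces $l=\order$ \emph{and} $\hashfn$ injective on $\{j_1,\dots,j_{\order}\}$ — exactly the condition $\vecj\in\calA$. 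This disposes of all lower-order terms as well as any $\order$-tuples on which $\hashfn$ is not injective.

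For a surviving $\vecj\in\calA\cap\totsupp_{\order}$, the blocks are in bijection with the variables $j_1,\dots,j_{\order}$, so the sum factorizes completely into $\order$ independent single-digit sums. For each variable $j_t$ with $h(j_t)=k+1$, the digit $b_k$ controls whether the $j_t$-coordinate is $x_{j_t}$ (when $\beta_{j_t}=(-1)^{b_k}$) or $0$ (otherwise), with sign $(-1)^{b_k}$. I would then use the anchoring property (c) of Proposition~\ref{prop:mod_unique_multivar} — namely $\phi_{\vecj}$ vanishes whenever any of its arguments is zero — to telescope the alternating multilinear difference, exactly as in the bivariate computation in the proof of Lemma~\ref{lem:samp_motiv_biv}. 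Concretely, the two choices $b_k=0,1$ produce a first-order difference in the $j_t$-th argument between $x_{j_t}$ and $0$; taking the product over all $\order$ variables yields the full multilinear difference $\sum_{\vecv\in\{0,1\}^{\order}}(-1)^{\order-|\vecv|}\phi_{\vecj}(\dots)$, in which all terms containing a zero argument drop out and only the top term $\phi_{\vecj}(\vecx_{\vecj})$ survives, multiplied by the accumulated sign $\beta_{j_1}\cdots\beta_{j_{\order}}$.

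I expect the main obstacle to be purely notational rather than conceptual: carefully matching the sign convention $(-1)^{\digit(z-1)}$ and the digit-indexed coordinate rule \eqref{eq:multiv_xsamp_pts} so that the sign each argument carries lines up correctly, and verifying that the factor picked up is $\beta_{j_t}$ (and not $\bar\beta_{j_t}$ or an extra minus sign) for each $t$. The cleanest way to handle this is to verify the single-block identity
\begin{equation*}
\sum_{b\in\{0,1\}}(-1)^{b}\,g\bigl(\tfrac{1+(-1)^b\beta_{j_t}}{2}\,x_{j_t}\bigr)=\beta_{j_t}\bigl(g(x_{j_t})-g(0)\bigr)
\end{equation*}
for a generic one-variable slice $g$, then apply it iteratively across the $\order$ blocks, invoking the anchoring condition at each step to discard the terms with a vanishing argument. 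Once the single-block computation is pinned down, the factorization reduces the full lemma to the bivariate case already established in Lemma~\ref{lem:samp_motiv_biv}, and the result follows by summing the per-component contributions over the additive decomposition \eqref{eq:mod_unique_multivar}.
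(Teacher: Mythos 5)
Your proposal is correct and is essentially the paper's own argument: both proofs rest on the dyadic-digit structure of the sign $(-1)^{\digit(z-1)}$ — a component whose variables fail to hit every block of $\hashfn$ leaves a free digit over which the terms cancel in pairs, while for $\vecj\in\totsupp_{\order}$ with $\hashfn$ injective the anchoring conditions of Proposition \ref{prop:mod_unique_multivar} collapse the sum to $\beta_{j_1}\cdots\beta_{j_{\order}}\phi_{\vecj}(\vecx_{\vecj})$. The differences are purely organizational: the paper invokes anchoring first to restrict each component's sum to the set $U_{\vecj}$ of sign patterns matching $\sampsgnvec$ and then counts signs there, whereas you factor the signs first and use anchoring only to discard the lower terms of the resulting multilinear difference, and your free-digit cancellation also covers the constant term $\modmean$ as the degenerate case in which no block is hit.
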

%
%
\begin{proof}
We plug \eqref{eq:mod_unique_multivar} into the left-hand side of \eqref{eq:multilin_ident_multiv_2} and obtain
\begin{align*}
&\sum_{z=1}^{2^{\order}} (-1)^{\digit(z-1)}f(\vecx_z) \\
&=\sum_{z=1}^{2^{\order}} (-1)^{\digit(z-1)}
\Bigl[\modmean + \sum_{j \in \bigcup_{r=2}^{\order}\totsupp_{r}^{(1)} \cup \univsupp} \phi_{j}(x_{z,j}) + \sum_{\vecj \in \bigcup_{r=3}^{\order}\totsupp_{r}^{(2)} \cup \bivsupp} \phi_{\vecj}((\vecx_z)_{\vecj})+ \dots + \sum_{\vecj \in \totsupp_{\order}} \phi_{\vecj}((\vecx_z)_{\vecj})\Bigr]\\
&=\mu \sum_{z=1}^{2^{\order}} (-1)^{\digit(z-1)}+\sum_{j \in \bigcup_{r=2}^{\order}\totsupp_{r}^{(1)} \cup \univsupp} \sum_{z=1}^{2^{\order}} (-1)^{\digit(z-1)}\phi_{j}(x_{z,j}) + \dots
+ \sum_{\vecj \in \totsupp_{\order}} \sum_{z=1}^{2^{\order}} (-1)^{\digit(z-1)}\phi_{\vecj}((\vecx_z)_{\vecj}) \\
&=I_0+I_1+\dots+I_{\order}.
\end{align*}
We show first that $I_0=I_1=\dots=I_{{\order}-1}=0.$ Indeed,
\begin{align*}
I_0=\mu \sum_{z=1}^{2^{\order}} (-1)^{\digit(z-1)}=\mu \sum_{z=1}^{2^{{\order}-1}} \Bigl((-1)^{\digit(2z-2)}+(-1)^{\digit(2z-1)}\Bigr)
\end{align*}
and the last expression vanishes as $\digit(2z-1)=\digit(2z-2)+1$ for every $z\in[2^{{\order}-1}].$

If $j \in \bigcup_{r=2}^{\order}\totsupp_{r}^{(1)} \cup \univsupp$, we define the set $U_j=\set{z\in[2^{\order}]: \beta_j=(-1)^{\digit(z-1,h(j)-1)}}$ and write
\begin{align*}
I_1&= \sum_{j \in \bigcup_{r=2}^{\order}\totsupp_{r}^{(1)} \cup \univsupp}\sum_{z=1}^{2^{\order}} (-1)^{\digit(z-1)}\phi_{j}(x_{z,j})=
\sum_{j \in \bigcup_{r=2}^{\order}\totsupp_{r}^{(1)} \cup \univsupp} \phi_{j}(x_{j})\sum_{\substack{z\in[2^{\order}]\\ \beta_j=(-1)^{\digit(z-1,h(j)-1)}}} (-1)^{\digit(z-1)}\\
&=\sum_{j \in \bigcup_{r=2}^{\order}\totsupp_{r}^{(1)} \cup \univsupp} \phi_{j}(x_{j})\sum_{z\in U_j} (-1)^{\digit(z-1)}.
\end{align*}
\jvyb{The definition of $U_j$ fixes one digit of $z-1$ (namely the one at position $h(j)-1$).
The sums over $U_j$ contain $2^{{\order}-1}$ number of summands. Looking at their digit
on a position different from $h(j)-1$, we see that half of the summands is equal to 1 and the other half to $-1.$ Therefore also $I_1=0.$}

Similarly, if $\vecj=(j_1,j_2)\in\bigcup_{r=3}^{\order}\totsupp_{r}^{(2)} \cup \bivsupp$, we set
$$
U_{\vecj}=\set{z\in[2^{\order}]: \beta_{j_1}=(-1)^{\digit(z-1,h(j_1)-1)}\ \text{and}\ \beta_{j_2}=(-1)^{\digit(z-1,h(j_2)-1)}}
$$
and obtain
\begin{align*}
I_2=\sum_{\vecj \in \bigcup_{r=3}^{\order}\totsupp_{r}^{(2)} \cup \bivsupp} \phi_{\vecj}((\vecx_z)_{\vecj})\sum_{z\in U_{\vecj}}(-1)^{\digit(z-1)}.
\end{align*}
\jvyb{If now $h(j_1)=h(j_2)$ and $\beta_{j_1}\not=\beta_{j_2}$, then $U_{\vecj}$ is empty and the sum over $U_{\vecj}$ is zero.
If $h(j_1)=h(j_2)$ and $\beta_{j_1}=\beta_{j_2}$, then $U_{\vecj}=\{z\in[r_0]:\beta_{j_1}=(-1)^{\digit(z-1,h(j_1)-1)}\}=U_{j_1}$
contains $2^{r_0-1}$ elements and the sum over $U_{\vecj}$ is again zero by the same argument as above. Finally, if $h(j_1)\not=h(j_2)$,
the definition of $U_{\vecj}$ fixes two digits of $z-1$. Therefore, $U_{\vecj}$ has $2^{r_0-2}$ elements.
Then we consider an index $l\in\{0,1,\dots,r_0-1\}$ different from $h(j_1)$ and $h(j_2)$, and observe that $z\in U_{\vecj}$ can have dyadic digit on $l$
equal to zero or one. Hence the sum over $U_{\vecj}$ is again equal to zero and $I_2=0.$
The same argument can be applied as long as $\{h(j_1),\dots,h(j_r)\}$ is a proper subset of $[r_0]$
leading to $I_0=I_1=\dots=I_{{\order}-1}=0.$}

Finally, if $\vecj=(j_1,\dots,j_{\order})\in\totsupp_{\order}$, we define
$$
U_{\vecj}=\set{z\in[2^{\order}]: \beta_{j_i}=(-1)^{\digit(z-1,h(j_i)-1)}\ \text{for all}\ i\in[{\order}]}.
$$
If $\hashfn$ is an injection on $\{j_1,\dots,j_{\order}\}$, we get that $\{h(j_1),\dots,h(j_{\order})\}=[{\order}]$ and $U_\vecj=\set{z^\vecj}$ is a singleton with
$$
\sum_{z\in U_{\vecj}}(-1)^{\digit(z-1)}=(-1)^{\digit(z^\vecj-1)}=\prod_{i=1}^{\order}(-1)^{\digit(z^\vecj-1,i-1)}
=\prod_{i=1}^{\order}(-1)^{\digit(z^\vecj-1,h(j_i)-1)}=\prod_{i=1}^{\order} \beta_{j_i}.
$$
If, on the other hand, $\hashfn$ is no injection on $\set{j_1,\dots,j_{\order}}$, $U_\vecj$ has even number of elements and using the same argument as above we
obtain
$$
\sum_{z\in U_{\vecj}}(-1)^{\digit(z-1)}=0.
$$
We conclude that
\begin{align*}
\sum_{z=1}^{2^{\order}} (-1)^{\digit(z-1)}f(\vecx_z)&=I_0+I_1+\dots+I_{\order}
=\sum_{\vecj\in\totsupp_{\order}}\phi_{\vecj}(\vecx_{\vecj})\sum_{z\in U_\vecj} (-1)^{\digit(z-1)}\\
&=\sum_{\vecj\in\calA\cap\totsupp_{\order}} \beta_{j_1}\dots\beta_{j_{\order}}\phi_{\vecj}(\vecx_{\vecj}).\qedhere
\end{align*}
\end{proof}

We denote again $\origspars_{\vecj}(\vecx_{\vecj}) = \phi_{\vecj}(\vecx_{\vecj})$ if $\vecj \in \totsupp_{\order}$ and $0$ otherwise.
Similarly, $\origsparsvec(\vecx) \in \matR^{{[d] \choose \order}}$ stands for the corresponding $\abs{\totsupp_{\order}}$-sparse vector
and $\origsparsvec(\vecx;\calA) \in \matR^{{[d] \choose \order}}$ for the projection of $\origsparsvec(\vecx)$ onto $\calA$. 
Again, $\origsparsvec(\vecx;\calA)$ is $\abs{\totsupp_{\order} \cap \calA}$-sparse. 
Finally, for a \randvar vector $\sampsgnvec \in \set{-1,+1}^d$, 
let $\betavecord \in \set{-1,+1}^{[d] \choose \order}$ where $\betaord_{\vecj} = \sampsgn_{j_1}\sampsgn_{j_2}\dots\sampsgn_{j_{\order}}$ 
for each $\vecj = (j_1,j_2,\dots,j_{\order})$. 
Hence, \eqref{eq:multilin_ident_multiv_2} corresponds to a linear measurement of $\origsparsvec(\vecx;\calA)$ with $\betavecord$. 

\paragraph{Assumptions.} We will make the following assumptions on the model \eqref{eq:mod_unique_multivar}.
\begin{enumerate}
\item \textit{Smoothness.} Each term in \eqref{eq:mod_unique_multivar} is H\"older continuous with parameters  
$L > 0, \alpha \in (0,1]$, i.e., for each $i \in [\order]$, 
\begin{align}
\abs{\phi_{\vecj}(\vecx) - \phi_{\vecj}(\vecy)} &\leq L \norm{\vecx - \vecy}_2^{\alpha} \quad 
\text{ for all } \vecj \in \totsupp_i \cup\bigcup_{l=i+1}^{\order} \totsupp_l^{(i)}\ \text{and for all }\vecx,\vecy \in [-1,1]^{i}.
\end{align} 
%
\item \textit{Identifiability of $\totsupp_i$, $i \in [\order]$.} 
We assume that for each $i \in [\order]$ there exists a constant $\idenconst_i > 0$, such that 
for every $\vecj \in \totsupp_i$ there exists $\vecx_{\vecj}^{*} \in [-1,1]^i$ with $\abs{\phi_{\vecj}(\vecx^{*}_{\vecj})} > \idenconst_i$.
%
%
\item \jvyb{\textit{Disjointness.} We assume that $\totsupp_p^{(1)} \cap \totsupp_q^{(1)} = \emptyset$
for all $p \neq q \in [\order]$. If $\order = 2$, we observed earlier in Section \ref{sec:spam_biv} 
that the assumption $\univsupp\cap \bivsupp^{(1)}=\emptyset$ can be made without loss
of generality. However, for $\order > 2$, this is an additional assumption. It will allow to structure 
the recovery algorithm into recursive steps. For eg., if $\order = 3$, then the following configuration 
does not satisfy the disjointness assumption}
\begin{equation*}
\jvyb{\univsupp = \set{1,2,3}, \quad \bivsupp = \set{(4,5), (5,6), (6,7)},\quad  \totsupp_3 = \set{(6,8,9)}.}
\end{equation*}
\jvyb{In this case, ${\mathcal S}_2^{(1)}=\{4,5,6,7\}$ and ${\mathcal S}_3^{(1)}=\{6,8,9\}$ are not disjoint.}
\end{enumerate}

\paragraph{Sampling scheme for identifying $\totsupp_{\order}$.}
Similarly to the sampling scheme for identifying $\bivsupp$ in the bivariate case,
we generate independent \randvar vectors $\sampsgnvec_1, \sampsgnvec_2, \dots, \sampsgnvec_n \in \set{-1,1}^d$.
For fixed $\vecx \in [-1,1]^d$ and $\hashfn : [d] \rightarrow [\order]$,
we obtain the samples $\ftil(\vecx_{i,z}) = f(\vecx_{i,z}) + \noise_{i,z}$, where $i\in[n]$ and $z \in[2^{\order}]$.
Here, $\vecx_{i,z}$ are generated using $\vecx_i, \sampsgnvec_i, \hashfn$ as outlined in \eqref{eq:multiv_xsamp_pts}.

As a direct implication of Lemma \ref{lem:samp_motiv_multiv}, we obtain the linear system
\begin{equation} \label{eq:lin_sys_multiv_spam}
\underbrace{\begin{pmatrix}
  \sum_{z=1}^{2^{\order}} (-1)^{\digit(z-1)} \ftil(\vecx_{1,z})  \\ \vdots \\ \vdots \\ \sum_{z=1}^{2^{\order}} (-1)^{\digit(z-1)} \ftil(\vecx_{n,z})
 \end{pmatrix}}_{\vecy \in \matR^n}
= 
\underbrace{\begin{pmatrix}
  {\betavecord_{1}}^T  \\ \vdots \\ \vdots \\ {\betavecord_{n}}^T
 \end{pmatrix}}_{\matB \in \matR^{n \times {d \choose \order}}} \origsparsvec(\vecx;\calA) 
+ \underbrace{\begin{pmatrix}
  \sum_{z=1}^{2^{\order}} (-1)^{\digit(z-1)} \noise_{1,z} \\ \vdots \\ \vdots \\ \sum_{z=1}^{2^{\order}} (-1)^{\digit(z-1)} \noise_{n,z}
 \end{pmatrix}}_{\noisevec \in \matR^n}.
\end{equation}

By feeding $\vecy, \matB$ as input to $\csalg$, we will obtain the estimate $\est{\origsparsvec}(\vecx;\calA)$ to 
$\origsparsvec(\vecx;\calA)$. Assuming $\csalg$ to be $\epsilon$-accurate at $\vecx$, we will have that 
$\norm{\est{\origsparsvec}(\vecx;\calA) - \origsparsvec(\vecx;\calA)}_{\infty} \leq \epsilon$ holds. 

The choice of $\vecx,\hashfn$ is along similar lines as in the previous section. Indeed 
we first construct a $(d,\order)$ hash family $\hashfam_{\order}^d$ so that
for any $\vecj = (j_1,\dots,j_{\order}) \in {[d] \choose \order}$, there exists $\hashfn \in \hashfam_{\order}^d$ which is injective on  
$[\vecj]$. For each $\hashfn \in \hashfam_{\order}^d$, let us define 
$\canvec_1(\hashfn), \canvec_2(\hashfn), \dots, \canvec_{{\order}}(\hashfn) \in \matR^{\dimn}$, where
\begin{equation*}
(\canvec_i(\hashfn))_q := \left\{
\begin{array}{rl}
1 \ ; & \hashfn(q) = i, \\
0 \ ; & \text{otherwise}
\end{array} \right .  \quad \text{for} \ i\in[\order] \ \text{and} \ q\in[\dimn].
\end{equation*}
We then create the following $\order$ dimensional grid with respect to $\hashfn$.
\begin{equation} \label{eq:baseset_hash_gen}
\baseset(\hashfn) := \biggl\{\vecx \in [-1,1]^{\dimn}: \vecx = \sum_{i=1}^{\order} c_i \canvec_i(\hashfn); c_1,c_2,\dots,c_{\order} \in
\Bigl\{-1,-\frac{\numbasepts-1}{\numbasepts},\dots,\frac{\numbasepts-1}{\numbasepts},1\Bigr\}\biggr\}.
\end{equation}
Equipped with $\baseset(\hashfn)$ for each $\hashfn \in \hashfam_{\order}^d$, we now possess the following approximation 
property. For any $\vecj \in {[d] \choose \order}$ and any $(x^{*}_{j_1},x^{*}_{j_2},\dots,x^{*}_{j_{\order}}) \in [-1,1]^{\order}$, 
there exists $\hashfn \in \hashfam_{\order}^d$ and a corresponding $\vecx \in \baseset(\hashfn)$ so that 
$\abs{x^{*}_{j_1} - x_{j_1}}, \abs{x^{*}_{j_2} - x_{j_2}}, \dots, \abs{x^{*}_{j_{\order}} - x_{j_{\order}}} \leq 1/m$. 

Here on, our idea for estimating $\totsupp_{\order}$ is based on the same principle that we followed in the preceding section. 
Assume that $\csalg$ is $\epsilon$ accurate for each $\hashfn \in \hashfam_{\order}^d$, $\vecx \in \baseset(\hashfn)$, 
and that $\numbasepts$, $\epsilon$ are sufficiently large 
and small respectively. Hence, if we estimate $\origsparsvec(\vecx;\calA)$ at each 
$\hashfn \in \hashfam_{\order}^d$ and $\vecx \in \baseset(\hashfn)$, 
then for every $\vecj \in \totsupp_{\order}$ we are guaranteed to have a point $\vecx$ at which the 
estimate $\abs{\est{\origspars_{\vecj}}(\vecx_{\vecj})}$ is sufficiently large. Moreover, 
for every $\vecj \not\in \totsupp_{\order}$, we would always (i.e., for each $\hashfn \in \hashfam_{\order}^d$, $\vecx \in \baseset(\hashfn)$) 
have that $\abs{\est{\origspars_{\vecj}}(\vecx_{\vecj})}$ is sufficiently small;  
more precisely, $\abs{\est{\origspars_{\vecj}}(\vecx_{\vecj})} \leq \epsilon$ 
since $\phi_{\vecj} \equiv 0$. Consequently, we will be able to identify $\totsupp_{\order}$ by thresholding, via a 
suitable threshold.

\paragraph{Sampling scheme for identifying $\totsupp_{\order-1}$.}
Say we have an estimate for $\totsupp_{\order}$, lets call it $\est{\totsupp_{\order}}$, and assume 
$\est{\totsupp_{\order}}$ was identified correctly, so $\est{\totsupp_{\order}} = \totsupp_{\order}$. 
Then, we now have a SPAM of order $\order-1$ on the reduced set of variables $\calP = [d]\setminus\est{\totsupp_{\order}^{(1)}}$.
Therefore, in order to estimate $\totsupp_{\order-1}$, we simply repeat the above procedure on the 
reduced set $\calP$ by freezing the variables in $\est{\totsupp_{\order}^{(1)}}$ to $0$. 
More precisely, we have the following steps.
\begin{itemize}
\item We will construct a $(\calP, \order-1)$ hash family $\hashfam_{\order-1}^{\calP}$, hence each 
$\hashfn \in \hashfam_{\order-1}^{\calP}$ is a mapping $\hashfn:\calP \rightarrow [\order-1]$.

\item For each $\hashfn \in \hashfam_{\order-1}^{\calP}$, define
$\canvec_1(\hashfn), \canvec_2(\hashfn), \dots, \canvec_{{\order}-1}(\hashfn) \in \matR^{\calP}$, where
\begin{equation*}
(\canvec_i(\hashfn))_q := \left\{
\begin{array}{rl}
1 \ ; & \hashfn(q) = i \ \text{and} \ q \in \calP, \\
0 \ ; & \text{otherwise},
\end{array} \right .  \quad \text{for} \ i\in[\order-1] \ \text{and} \ q \in \calP,
\end{equation*}
and use $(\canvec_i(\hashfn))_{i=1}^{\order-1}$ to create a $\order-1$ dimensional grid $\baseset(\hashfn) \subset [-1,1]^{\calP}$ 
in the same manner as in \eqref{eq:baseset_hash_gen}. 

\item For $\hashfn \in \hashfam_{\order-1}^{\calP}$, a \randvar vector $\sampsgnvec \in \set{-1,1}^{\calP}$ and 
$\vecx \in [-1,1]^{\calP}$, we define $\vecx_z \in \matR^d$ in \eqref{eq:multiv_xsamp_pts} as follows
\begin{equation} \label{eq:multiv_xsamp_pts_1}
(\vecx_z)_i=x_{z,i}=
\begin{cases} 
x_i\ ; \quad \text{if }\beta_i=(-1)^{\digit(z-1,h(i)-1)} \text{ and} \ i \in \calP,\\
0\ ; \quad \text{otherwise},
\end{cases} \ \text{for}\  i\in[d]\quad\text{and}\quad  z \in[2^{\order-1}].
\end{equation}
 Hence denoting 
$\calA = \set{\vecj \in {\calP \choose \order-1}: \hashfn \text{ is injective on } \set{j_1,\dots,j_{\order-1}} }$, 
since $\est{\totsupp_{\order}} = \totsupp_{\order}$, we obtain as a result of Lemma \ref{lem:samp_motiv_multiv} that 
\begin{align}
\sum_{z=1}^{2^{\order-1}} (-1)^{\digit(z-1)}f(\vecx_z)
= \sum_{\vecj \in \calA \cap \totsupp_{\order-1}} \beta_{j_1}\dots \beta_{j_{\order-1}} \phi_{\vecj} (\vecx_{\vecj}). \label{eq:multilin_ident_multiv_3}
\end{align}
Consequently, in the linear system in \eqref{eq:lin_sys_multiv_spam}, we have 
$\matB \in \matR^{n \times {\abs{\calP} \choose \order-1}}$ where the $i^{th}$ row of $\matB$ 
is $\sampsgnvec^{(\order-1)} \in \set{-1,+1}^{\calP \choose \order-1}$ with 
$\sampsgn^{(\order-1)}_{\vecj} = \sampsgn_{j_1}\sampsgn_{j_2}\dots\sampsgn_{j_{\order-1}}$ 
for each $\vecj = (j_1,j_2,\dots,j_{\order-1})$. Note that $\origsparsvec(\vecx;\calA) \in \matR^{{\calP \choose \order-1}}$ is the  
$\abs{\totsupp_{\order-1}}$ sparse vector to be estimated.

\item Finally, we will estimate $\origsparsvec(\vecx;\calA)$ at each 
$\hashfn \in \hashfam_{\order-1}^{\calP}$ and $\vecx \in \baseset(\hashfn)$.  
If $\csalg$ is $\epsilon$ accurate, with $\epsilon$ sufficiently small, then by choosing 
the number of points $m$ to be sufficiently large, we will be able to identify $\totsupp_{\order-1}$ 
via thresholding.
\end{itemize} 
%

By repeating the above steps for all $i=\order, \order-1,\dots,1$, we arrive at a procedure for estimating the supports 
$\totsupp_i, i \in [\order]$; this is outlined formally in the form of the Algorithm \ref{algo:multiv_spam} below.
%
%
\begin{algorithm*}[!ht]
\caption{Algorithm for estimating $\totsupp_1,\totsupp_2,\dots,\totsupp_{\order}$} \label{algo:multiv_spam} 
\begin{algorithmic}[1] 
\State \textbf{Input:} $d$, $\abs{\totsupp_i}$, $(m_i,n_i,\epsilon_i)$ for $i = 1,\dots,\order$. 
\State \textbf{Initialization:} $\est{\totsupp_i} = \emptyset$ for $i = 1,\dots,\order$. $\calP = [d]$.
\State \textbf{Output:} $\est{\totsupp_i}$ for $i = 1,\dots,\order$.\\
\hrulefill

\For{$i = \order,\order-1,\dots,1$}\qquad \textsc{// Estimation of } $\totsupp_i$ 

\State Generate \randvar random vectors $\sampsgnvec_1, \sampsgnvec_2, \dots, \sampsgnvec_{n_i} \in \set{-1,1}^{\calP}$.  \label{algomultivspam:gen_betas}

\State Form $\matB \in \matR^{n_i \times \jvyb{{\abs{\calP} \choose i}}}$ as in \eqref{eq:lin_sys_multiv_spam}. \label{algomultivspam:gen_B}

\State Construct a $(\calP,i)$ hash family $\hashfam_{i}^{\calP}$.

\For{$\hashfn \in \hashfam_{i}^{\calP}$}

\State Construct $\baseset(\hashfn) \subset [-1,1]^{\calP}$ in the same manner as in \eqref{eq:baseset_hash_gen} 
       with $\abs{\baseset(\hashfn)} = (2m_i+1)^{i}$. \label{algomultivspam:base_constr}

\For{$\vecx \in \baseset(\hashfn)$}

	\State Generate $\vecx_{z} \in [-1,1]^d$, with $z \in[2^{i}]$ 
	as in \eqref{eq:multiv_xsamp_pts_1}, using $\vecx, \sampsgnvec_u$ for each $u \in [n_i]$. 
	\label{algomultivspam:gen_xts}
	
	\State Using the samples $(\ftil(\vecx_{z}))_{z=1}^{2^{i}}$, form $\vecy \in \matR^{n_i}$ as in \eqref{eq:lin_sys_multiv_spam}. \label{algomultivspam:query_f}

	\State Obtain $\est{\origsparsvec}(\vecx;\calA) = \csalg_i(\vecy, \matB)$. \label{algomultivspam:sparse_rec}
			
	\State Update $\est{\totsupp_i} = \est\totsupp_i \cup \set{\vecj \in \calA: \abs{\est{\origspars_{\vecj}}(\vecx_{\vecj})} > \epsilon_i}$. \label{algomultivpam:update_bivsupp}

\EndFor
\EndFor

\State Update $\calP = \calP \setminus \est\totsupp^{(1)}_i$.

\EndFor
\end{algorithmic}
\end{algorithm*}
%
%
Lemma \ref{lemma:multiv_spam_algo} below provides sufficient conditions on the sampling parameters in Algorithm \ref{algo:multiv_spam} 
for exact recovery of all $\totsupp_i$'s. 
\begin{lemma} \label{lemma:multiv_spam_algo}
For each $i\in[\order]$ assume that the following hold:
\begin{enumerate}
\item $m_i \geq \sqrt{i}\bigl(\frac{3L}{\idenconst_i}\bigr)^{1/\alpha}$.

\item $\csalg_i$ is $\epsilon_i$ accurate with $\epsilon_i < \idenconst_i/3$  for 
all $\hashfn \in \hashfam_i^{\calP_i}$, $\vecx \in \baseset(\hashfn)$, where 
$\calP_i$ denotes the set $\calP$ at the beginning of iteration $i$ (so $\calP_{\order} = [d]$).
The number of measurements used by $\csalg_i$ is denoted by $n_i$.

\item $\hashfam_i^{\calP_i}$ is a $(\calP_i,i)$ hash family.
\end{enumerate}
Then $\est{\totsupp}_i = \totsupp_i$ for all $i=\order,\order-1,\dots,1$ in Algorithm \ref{algo:multiv_spam}. 
Moreover, the total number of queries of $f$ made is 
$$\sum_{i=1}^{\order} 2^i (2m_i + 1)^i n_i \abs{\hashfam_i^{\calP_i}}.$$
\end{lemma}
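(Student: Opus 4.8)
The plan is to establish both assertions — exact recovery $\est{\totsupp}_i = \totsupp_i$ for every $i$, and the stated query count — by downward induction on $i$ from $\order$ to $1$, following the structure of Lemma \ref{lemma:biv_spam_algo} but with the additional bookkeeping forced by the recursive peeling. The inductive hypothesis at the start of iteration $i$ is that $\est{\totsupp}_j = \totsupp_j$ for all $j > i$, so that the set $\calP_i = [d] \setminus \bigcup_{j > i}\totsupp_j^{(1)}$ maintained by the algorithm coincides with the true reduced index set, and moreover that the function obtained from $f$ by freezing every coordinate outside $\calP_i$ to zero is again a SPAM of the form \eqref{eq:mod_unique_multivar} on the variables $\calP_i$, of order $i$, whose top-order support is exactly $\totsupp_i$ and whose lower supports are $\totsupp_1,\dots,\totsupp_{i-1}$.

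The crux — and the step I expect to be the main obstacle — is verifying this last structural claim, which is precisely where the disjointness assumption $\totsupp_p^{(1)} \cap \totsupp_q^{(1)} = \emptyset$ is used. Every summand in \eqref{eq:mod_unique_multivar} is indexed by a tuple lying inside a single $\totsupp_r^{(1)}$ (it is either an element of $\totsupp_r$ or of a projection ${\totsupp_r^{(1)} \choose l}$). Freezing the coordinates in $\bigcup_{j>i}\totsupp_j^{(1)}$ to zero therefore splits the summands into two groups. For a tuple contained in some $\totsupp_j^{(1)}$ with $j > i$, at least one of its coordinates is set to zero, and the anchoring conditions (b),(c) of Proposition \ref{prop:mod_unique_multivar} force the corresponding $\phi$ to vanish identically; thus all contributions of $\totsupp_{i+1},\dots,\totsupp_{\order}$ and their projections disappear. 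Disjointness guarantees that no summand straddles $\calP_i$ and its complement, so every remaining term — exactly those arising from $\totsupp_1,\dots,\totsupp_i$ — is left untouched. This is what lets us invoke Lemma \ref{lem:samp_motiv_multiv} on the reduced SPAM to obtain the measurement identity \eqref{eq:multilin_ident_multiv_3}, so that $\vecy, \matB$ form a genuine linear system for the $\abs{\totsupp_i}$-sparse vector $\origsparsvec(\vecx;\calA)$.

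Granting the reduction, the recovery of $\totsupp_i$ at iteration $i$ follows the thresholding argument of Lemma \ref{lemma:biv_spam_algo} almost verbatim, with $\sqrt{2}$ replaced by $\sqrt{i}$. For $\vecj \in \totsupp_i$, identifiability supplies $\vecx^{*}_{\vecj}$ with $\abs{\phi_{\vecj}(\vecx^{*}_{\vecj})} > \idenconst_i$; since $\hashfam_i^{\calP_i}$ is a $(\calP_i,i)$ hash family there is an $\hashfn$ injective on $\vecj$, and the grid $\baseset(\hashfn)$ supplies a point $\vecx$ with $\norm{\vecx_{\vecj} - \vecx^{*}_{\vecj}}_2 \leq \sqrt{i}/m_i$. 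Hölder continuity together with $m_i \geq \sqrt{i}\bigl(3L/\idenconst_i\bigr)^{1/\alpha}$ bounds $\abs{\phi_{\vecj}(\vecx_{\vecj}) - \phi_{\vecj}(\vecx^{*}_{\vecj})}$ by $L i^{\alpha/2}/m_i^{\alpha} \leq \idenconst_i/3$, so $\epsilon_i$-accuracy of $\csalg_i$ yields $\abs{\est{\origspars_{\vecj}}(\vecx_{\vecj})} \geq 2\idenconst_i/3 - \epsilon_i > \epsilon_i$, and $\vecj$ is added to $\est{\totsupp}_i$. Conversely, for any $\vecj \in \calA \setminus \totsupp_i$ the corresponding entry of $\origsparsvec(\vecx;\calA)$ is zero (since $\phi_{\vecj} \equiv 0$ on the reduced SPAM), so $\abs{\est{\origspars_{\vecj}}(\vecx_{\vecj})} \leq \epsilon_i$ at every $\hashfn$ and $\vecx$, and $\vecj$ is never selected. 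The threshold $\epsilon_i < \idenconst_i/3$ thus gives $\est{\totsupp}_i = \totsupp_i$ exactly, closing the induction.

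Finally, the query count is a direct tally. At iteration $i$ the algorithm ranges over the $\abs{\hashfam_i^{\calP_i}}$ hash functions, over the $(2m_i+1)^i$ base points of each $\baseset(\hashfn)$, and over the $n_i$ \randvar vectors, querying $f$ at the $2^i$ points $\vecx_z$ of \eqref{eq:multiv_xsamp_pts_1} for each; multiplying gives $2^i (2m_i+1)^i n_i \abs{\hashfam_i^{\calP_i}}$ queries for iteration $i$, and summing over $i = 1,\dots,\order$ produces the claimed total.
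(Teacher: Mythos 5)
Your proof is correct and follows essentially the same route as the paper's: downward induction on $i$, the thresholding argument of Lemma \ref{lemma:biv_spam_algo} generalized to grid resolution $\sqrt{i}/m_i$, and the same direct tally of queries. The only difference is that you explicitly verify the structural reduction step (that freezing the variables of the already-recovered higher-order supports to zero leaves an order-$i$ SPAM on $\calP_i$, using disjointness and the anchoring conditions of Proposition \ref{prop:mod_unique_multivar}), a claim the paper's proof merely asserts; your verification is sound and fills in that detail correctly.
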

\begin{proof}
The proof outline builds on what we have seen in the preceding sections. 
Say we are at the beginning of iteration $i\in[\order]$ with $\est{\totsupp}_l = \totsupp_l$ holding true for each $l > i$. 
Hence, the model has reduced to an order $i$ sparse additive model on the set $\calP_i \subset [d]$, with 
$\totsupp_i^{(1)},\totsupp_{i-1}^{(1)},\dots,\totsupp_1 \subset \calP_i$. 

By identifiability assumption, we know that for any given $\vecj \in \totsupp_i$, there exists $\vecx^{*}_{\vecj} \in [-1,1]^i$
such that $\abs{\phi_{\vecj}(\vecx^{*}_{\vecj})} \geq \idenconst_i$ holds. Moreover, since $\hashfam_i^{\calP_i}$ is a $(\calP_i,i)$ hash family,  
there exists a $\hashfn \in \hashfam_i^{\calP_i}$ that is an injection on $\vecj$.  
Consequently, there exists $\vecx \in \baseset(\hashfn)$ such that 
$\norm{\vecx_{\vecj} - \vecx^{*}_{\vecj}}_2 \leq (\sum_{p=1}^{i} \frac{1}{m_i^2})^{1/2} = \sqrt{i}/{m_{i}}$.
By H\"older continuity of $\phi_{\vecj}$, this means
\begin{align}
\abs{\phi_{\vecj}(\vecx_{\vecj}) - \phi_{\vecj}(\vecx^{*}_{\vecj})} \leq L \frac{i^{\alpha/2}}{m_i^{\alpha}}. \label{eq:multiv_lemma_proof_1}
\end{align}
Since $\csalg_i$ is $\epsilon_i$ accurate for each $\hashfn \in \hashfam_{i}^{\calP_i}$, $\vecx \in \baseset(\hashfn)$, we know that 
at the aforementioned $\vecx$, the following holds via reverse triangle inequality
\begin{equation}
\abs{\est{\origspars_{\vecj}}(\vecx_{\vecj})} \geq \abs{\phi_{\vecj}(\vecx_{\vecj})} - \epsilon_i.  \label{eq:multiv_lemma_proof_2}
\end{equation}
Using \eqref{eq:multiv_lemma_proof_1}, \eqref{eq:multiv_lemma_proof_2}, reverse triangle inequality and the choice of $\epsilon_i$ and $m_i$, we obtain
\begin{align*}
\abs{\est{\origspars_{\vecj}}(\vecx_{\vecj})} 
&\geq \abs{\phi_{\vecj}(\vecx^{*}_{\vecj})} - L \frac{i^{\alpha/2}}{m_i^{\alpha}} - \epsilon_i
\geq \idenconst_i - L \frac{i^{\alpha/2}}{m_i^{\alpha}} - \epsilon_i 
\geq \frac{2\idenconst_i}{3} - L \frac{i^{\alpha/2}}{m_i^{\alpha}} 
\geq \frac{\idenconst_i}{3}. 
\end{align*}
For any $\vecj \notin \totsupp_i$, we have for all $\hashfn \in \hashfam_{i}^{\calP_i}$, $\vecx \in \baseset(\hashfn)$ that  
$\abs{\est{\origspars_{\vecj}}(\vecx_{\vecj})} \leq \epsilon_i < \idenconst_i/3$ (since $\phi_{\vecj} \equiv 0$). 
Hence clearly, the stated choice of $\epsilon_i$ guarantees identification of each $\vecj \in \totsupp_i$, 
and none from ${\calP_i \choose i} \setminus \totsupp_i$.
This means that we will recover $\totsupp_i$ exactly. As this is true for each $i \in [\order]$, it also completes the proof 
for exact recovery of $\totsupp_i$ for each $i \in [\order]$. 

The expression for the total number of queries made follows from a simple calculation where we note that at iteration $i$, 
and corresponding to each $\vecx \in \cup_{\hashfn \in \hashfam_{i}^{\calP_i}} \baseset(\hashfn)$, we make $2^i n_i$ queries of $f$. 
\end{proof}
\section{Estimating sparse multilinear functions from few samples}\label{sec:sparse_rec_results}
In this section, we provide results from the sparse recovery literature for estimating 
sparse multilinear forms from random samples. In particular, these results cover 
arbitrary bounded noise and i.i.d. Gaussian noise models.

\subsection{Sparse linear functions} \label{subsec:sparse_lin_funcs}
Consider a linear function $g: \matR^d \rightarrow \matR$, where $g(\sampsgnvec) = \sampsgnvec^T\veca$. 
Our interest is in recovering the unknown coefficient vector $\veca$ from 
$n$ noisy samples $y_i = g(\sampsgnvec_i) + \noise_i$, $i\in[n]$, where $\noise_i$ refers to the 
noise in the $i^{\text{th}}$ sample. Arranging the samples together, we arrive at the 
linear system $\vecy = \matB \veca + \noisevec$, where
\begin{equation} \label{eq:sparse_lin_sys}
\underbrace{\begin{pmatrix}
  y_1  \\ \vdots \\ \vdots \\ y_n
 \end{pmatrix}}_{\vecy \in \matR^{n}}
= 
\underbrace{\begin{pmatrix}
  {\sampsgnvec_1}^{T}  \\ \vdots \\ \vdots \\ {\sampsgnvec_n}^{T}
 \end{pmatrix}}_{\matB \in \matR^{n \times {d}}} \veca 
+ \underbrace{\begin{pmatrix}
  \noise_1  \\ \vdots \\ \vdots \\ \noise_n
 \end{pmatrix}}_{\noisevec}.
\end{equation}
Denoting by $\totsupp:= \set{j \in [d] : a_{j} \neq 0}$ the support of $\veca$, 
%
%
our interest is in the setting where $\veca$ is sparse, i.e., $\abs{\totsupp} = \totsparsity \ll d$, and consequently to 
estimate $\veca$ from a small number of samples $n$.
To begin with, we will require $\matB$ in \eqref{eq:sparse_lin_sys} to satisfy the so called $\ell_2/\ell_2$ 
RIP, defined below. 
\begin{definition} \label{def:rip_l2l2}
A matrix $\matA\in\matR^{n\times d}$ is said to satisfy the $\ell_2/\ell_2$ Restricted Isometry Property (RIP) of order $\totsparsity$ 
with constant $\riptt_{\totsparsity} \in (0,1)$ if
%
\begin{equation*}
(1-\riptt_{\totsparsity}) \norm{\vecx}_2^2 \ \leq \ \frac{1}{n}\norm{\matA \vecx}_2^2 \ \leq \ (1 + \riptt_{\totsparsity}) \norm{\vecx}_2^2
\end{equation*}
%
holds for all $\totsparsity$-sparse $\vecx$.
\end{definition}
%
%
\paragraph{Bounded noise model.} 
Let us consider the scenario where the noise is bounded in the $\ell_2$ norm, i.e., $\|{\noisevec}\|_2 \leq \arbnoisebd$. 
We will recover an estimate $\est{\veca}$ to $\veca$ as a solution of the following 
quadratically constrained $\ell_1$ minimization program \cite{Candes08restr}
\begin{equation} \label{eq:l1min_quadconstr}
\text{(P1)} \quad  \min_{\vecz \in \matR^{d}} \norm{\vecz}_1 \quad \text{s.t} \quad \norm{\vecy - \matB \vecz}_2 \leq \arbnoisebd.
\end{equation}
The following result provides a bound on the estimation error $\norm{\est{\veca} - \veca}_2$ for (P1). 
\begin{theorem} \label{thm:arbnoise_lin_rec}
Consider the sampling model in \eqref{eq:sparse_lin_sys}, where $\matB \in\{-1,+1\}^{n\times d}$ is a \randvar matrix.
Then the following hold.
\begin{enumerate}
\item (\cite{Baraniuk2008_simple}) 
For any constant $\riptt \in (0,1)$, there exist constants $c_1, c_2 > 0$ depending on $\riptt$ such that if 
$$n \geq c_1  \totsparsity \log (d/\totsparsity),$$ then with probability at least $1-2\exp(-c_2 n)$, 
the matrix $\matB$ satisfies $\ell_2/\ell_2$ RIP of order $\totsparsity$, with $\riptt_{\totsparsity} \leq \riptt$.

\item (\cite[Theorem 1.2]{Candes08restr}) Let $\matB$ satisfy the $\ell_2/\ell_2$ RIP with $\riptt_{2\totsparsity} < \sqrt{2}-1$.
Then there exist constants $C_1,C_2 > 0$ such that, simultaneously for all vectors $\veca \in \matR^{d}$,
any solution $\est{\veca}$ to (P1) satisfies
\begin{equation*}
\norm{\est{\veca} - \veca}_2 \leq C_1\frac{\norm{\veca-\veca_k}_1}{\sqrt{k}} + C_2\frac{\arbnoisebd}{\sqrt{n}}.
\end{equation*}
Here, $\veca_{\totsparsity}$ denotes the best $\totsparsity$-term approximation of $\veca$.
\end{enumerate}
\end{theorem}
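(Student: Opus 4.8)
The two assertions are both established facts from the compressive sensing literature, so the plan is to recall the standard arguments behind each; in practice one simply cites \cite{Baraniuk2008_simple} and \cite{Candes08restr}.

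For the first part, the plan is to obtain the RIP guarantee for the \randvar matrix $\matB$ by concentration of measure followed by a covering argument. First I would fix a $k$-sparse unit vector $\vecx$ and note that $(\matB\vecx)_i = \sampsgnvec_i^T\vecx$ satisfies $\expec[(\sampsgnvec_i^T\vecx)^2] = \norm{\vecx}_2^2$, since the entries are independent, mean-zero, and of unit variance. As these rows are bounded (hence sub-Gaussian), $\frac{1}{n}\norm{\matB\vecx}_2^2$ is an average of $n$ independent sub-exponential variables, and a Bernstein-type inequality yields $\prob[|\frac{1}{n}\norm{\matB\vecx}_2^2 - \norm{\vecx}_2^2| > \riptt] \leq 2\exp(-cn\riptt^2)$ for a universal constant $c$. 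Next I would pass from a single vector to all $k$-sparse vectors by a union bound: there are $\binom{d}{k}$ choices of support, and the unit sphere in $\matR^k$ admits an $\varepsilon$-net of cardinality at most $(3/\varepsilon)^k$. Choosing $\varepsilon$ a small constant and $n = \Omega(k\log(d/k))$ drives the summed failure probability below $2\exp(-c_2 n)$, and a routine approximation argument upgrades the estimate from the net to all $k$-sparse vectors, giving $\riptt_k \leq \riptt$.

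For the second part, the plan is to run the standard tube/cone analysis. I would set $\vech = \est{\veca} - \veca$ and let $T_0$ index the $k$ largest-magnitude entries of $\veca$. Since $\veca$ is feasible for (P1), because $\norm{\vecy - \matB\veca}_2 = \norm{\noisevec}_2 \leq \arbnoisebd$, optimality of $\est\veca$ gives $\norm{\est\veca}_1 \leq \norm{\veca}_1$, whence the cone inequality $\norm{\vech_{T_0^c}}_1 \leq \norm{\vech_{T_0}}_1 + 2\norm{\veca_{T_0^c}}_1$. I would then shell $T_0^c$ into consecutive blocks $T_1, T_2, \dots$ of size $k$ in decreasing order of magnitude and use the telescoping bound $\sum_{j\geq 2}\norm{\vech_{T_j}}_2 \leq k^{-1/2}\norm{\vech_{T_0^c}}_1$. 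Combining this with the $\ell_2/\ell_2$ RIP of order $2k$ applied to $\vech_{T_0\cup T_1}$, together with the tube constraint $\norm{\matB\vech}_2 \leq 2\arbnoisebd$ that follows from the triangle inequality on the two feasibility bounds, produces the two-term estimate $\norm{\vech}_2 \leq C_1 k^{-1/2}\norm{\veca - \veca_k}_1 + C_2\arbnoisebd/\sqrt{n}$, with the condition $\riptt_{2k} < \sqrt{2}-1$ ensuring the relevant coefficients stay positive.

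The main obstacle in the first part is controlling the union bound so that the sub-exponential concentration rate survives against exponentially many supports; this is exactly where the $\log(d/k)$ factor and the lower bound on $n$ appear, balanced against the net cardinality. In the second part the delicate step is the RIP bootstrapping that converts the block decomposition into a genuine bound on $\norm{\vech_{T_0\cup T_1}}_2$, and the threshold $\sqrt{2}-1$ arises precisely as the value below which the resulting self-referential inequality for $\norm{\vech_{T_0\cup T_1}}_2$ can be solved with a positive coefficient.
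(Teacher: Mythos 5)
Your proposal is correct and matches the paper's approach: the paper gives no proof of this theorem, simply citing \cite{Baraniuk2008_simple} for part 1 and \cite[Theorem 1.2]{Candes08restr} for part 2, and your sketches are precisely the standard arguments of those cited works (sub-exponential concentration plus an $\varepsilon$-net and union bound over supports for the RIP; the cone/tube shelling argument with the $\sqrt{2}-1$ threshold for the recovery bound). The one subtlety, which your sketch handles consistently, is that the paper's RIP (Definition \ref{def:rip_l2l2}) carries a $1/n$ normalization while $\matB$ has $\pm 1$ entries, which is exactly why the noise term in the final bound reads $C_2\arbnoisebd/\sqrt{n}$ rather than $C_2\arbnoisebd$.
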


\paragraph{Gaussian noise model.} 
We now consider the scenario where the noise samples are i.i.d. Gaussian with variance $\sigma^2$, i.e, 
$\noise_i \sim \calN(0,\sigma^2)$ i.i.d. for all $i\in[n]$. Using standard concentration inequalities 
for sub-exponential random variables \jvyb{(see Proposition \ref{prop:gaussian_norm_bd}(\ref{prop:gauss_l2norm_bd}) in Appendix \ref{app:std_conc_results})}, 
one can show that $\norm{\noisevec}_2 = \Theta(\sigma\sqrt{n})$ with high probability. 
This leads to the following straightforward corollary of Theorem \ref{thm:arbnoise_lin_rec}.
\begin{corollary} \label{cor:gauss_noise_lin_rec}
Consider the sampling model in \eqref{eq:sparse_lin_sys} for some given vector $\veca \in \matR^{n}$, and 
let $\noise_i \sim \calN(0,\sigma^2)$ i.i.d. for all $i\in[n]$. 
Say $\matB$ satisfies $\ell_2/\ell_2$ RIP with $\riptt_{2\totsparsity} < \sqrt{2}-1$. 
For some $\varepsilon \in (0,1)$, let $\est{\veca}$ be a solution to (P1) with $\arbnoisebd = (1+\varepsilon)\sigma\sqrt{n}$. 
Then there exists a constant $c_3 > 0$ so that any solution $\est{\veca}$ to (P1) satisfies
\begin{equation*}
\norm{\est{\veca} - \veca}_2 \leq C_1\frac{\norm{\veca-\veca_k}_1}{\sqrt{k}} + C_2 (1+\varepsilon)\sigma
\end{equation*}  
with probability at least $1 -2\exp(-c_3\varepsilon^2 n)$. Here $C_1, C_2$ are the constants 
from Theorem \ref{thm:arbnoise_lin_rec}.
\end{corollary}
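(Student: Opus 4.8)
The plan is to reduce the claim directly to Theorem \ref{thm:arbnoise_lin_rec}(2) by verifying that, with high probability, the true coefficient vector $\veca$ remains feasible for the program (P1) once the noise budget is fixed to $\arbnoisebd = (1+\varepsilon)\sigma\sqrt{n}$. The only probabilistic ingredient is a concentration bound on the Euclidean norm of the Gaussian noise vector $\noisevec$; everything else is the deterministic recovery guarantee already in hand.

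First I would invoke the concentration estimate for $\norm{\noisevec}_2$ recorded in Proposition \ref{prop:gaussian_norm_bd}(\ref{prop:gauss_l2norm_bd}). Since $\noise_i \sim \calN(0,\sigma^2)$ i.i.d., the random variable $\norm{\noisevec}_2^2/\sigma^2$ is chi-squared with $n$ degrees of freedom, and its centered version is sub-exponential; the standard sub-exponential tail bound then yields, for some absolute constant $c_3 > 0$, the upper-tail estimate
\[
\prob\bigl[\norm{\noisevec}_2 > (1+\varepsilon)\sigma\sqrt{n}\bigr] \leq \exp(-c_3\varepsilon^2 n),
\]
with the matching lower-tail bound accounting for the factor $2$ in the probability stated in the corollary.

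Next, on the event $\calE := \set{\norm{\noisevec}_2 \leq (1+\varepsilon)\sigma\sqrt{n}}$, which by the above has probability at least $1 - 2\exp(-c_3\varepsilon^2 n)$, the true vector satisfies $\norm{\vecy - \matB\veca}_2 = \norm{\noisevec}_2 \leq \arbnoisebd$, so $\veca$ is feasible for (P1) with this choice of $\arbnoisebd$. Because $\matB$ is assumed to satisfy the $\ell_2/\ell_2$ RIP with $\riptt_{2k} < \sqrt{2}-1$, Theorem \ref{thm:arbnoise_lin_rec}(2) applies verbatim with noise level $\arbnoisebd$, giving
\[
\norm{\est{\veca}-\veca}_2 \leq C_1\frac{\norm{\veca-\veca_k}_1}{\sqrt{k}} + C_2\frac{\arbnoisebd}{\sqrt{n}}.
\]
Substituting $\arbnoisebd = (1+\varepsilon)\sigma\sqrt{n}$ collapses the second term to $C_2(1+\varepsilon)\sigma$, which is exactly the claimed bound, and it holds on $\calE$.

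There is no genuine obstacle here: the argument is a purely deterministic application of the RIP recovery guarantee, conditioned on the single high-probability event that the realized noise norm does not exceed its near-expected value $\sigma\sqrt{n}$ by more than the factor $(1+\varepsilon)$. The only mild point worth spelling out is that the guarantee of Theorem \ref{thm:arbnoise_lin_rec}(2) is deterministic \emph{given} that $\matB$ is RIP, so once we are on $\calE$ the conclusion follows without any further union bound beyond the noise concentration; in particular the RIP hypothesis on $\matB$ and the randomness of $\noisevec$ need not be coupled.
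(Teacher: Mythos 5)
Your proposal is correct and follows exactly the paper's route: the paper's proof is the one-liner "use Proposition \ref{prop:gaussian_norm_bd}(\ref{prop:gauss_l2norm_bd}) with Theorem \ref{thm:arbnoise_lin_rec}," which is precisely your argument of conditioning on the event $\norm{\noisevec}_2 \leq (1+\varepsilon)\sigma\sqrt{n}$ (so that $\veca$ is feasible for (P1)) and then applying the deterministic RIP recovery bound. You have merely spelled out the details the paper leaves implicit, including the correct observation that the RIP hypothesis is deterministic here and only the noise concentration contributes to the failure probability.
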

\begin{proof}
Use \jvyb{Proposition \ref{prop:gaussian_norm_bd}(\ref{prop:gauss_l2norm_bd}) from Appendix \ref{app:std_conc_results}} with Theorem \ref{thm:arbnoise_lin_rec}.
\end{proof}
\subsection{Sparse bilinear functions} \label{subsec:sparse_bilin_funcs}
Let $\veca \in \matR^{{d \choose 2}}$ be a vector of length ${d \choose 2}$ with entries indexed 
by ${[d] \choose 2}$ (sorted in lexicographic order). 
The entry of $\veca$ at index $\vecj = (j_1,j_2) \in {[d] \choose 2}$ will be denoted by $a_{\vecj}$.
We now consider the setting where $g: \matR^d \rightarrow \matR$ is a second order multilinear function, i.e., 
$g(\sampsgnvec) = \dotprod{\betavect}{\veca}$ with $\betavect, \veca \in \matR^{{d \choose 2}}$, 
and $\betat_{(j_1, j_2)} = \sampsgn_{j_1} \sampsgn_{j_2}$.
As before, our goal is to recover $\veca$ from $n$ noisy samples $y_i = g(\sampsgnvec_i) + \noise_i$, $i\in[n]$ resulting 
in the linear system
\begin{equation} \label{eq:sparse_bilin_sys}
\underbrace{\begin{pmatrix}
  y_1  \\ \vdots \\ \vdots \\ y_n
 \end{pmatrix}}_{\vecy \in \matR^{n}}
= 
\underbrace{\begin{pmatrix}
  {\betavect_1}^{T}  \\ \vdots \\ \vdots \\ {\betavect_n}^{T}
 \end{pmatrix}}_{\matB \in \matR^{n \times {d \choose 2}}} \veca 
+ \underbrace{\begin{pmatrix}
  \noise_1  \\ \vdots \\ \vdots \\ \noise_n
 \end{pmatrix}}_{\noisevec}.
\end{equation}
Observe that
\begin{equation} \label{eq:sparsemat_quad_meas_rel}
\dotprod{\betavect}{\veca} = \sampsgnvec^T \matA \sampsgnvec = \dotprod{\sampsgnvec\sampsgnvec^T}{\matA},
\end{equation}
where $\matA \in \matR^{d \times d}$ is a symmetric matrix with zero on the diagonal and $A_{i,j} = a_{(i,j)}/2$ if $i \neq j$. 
This simple observation allows us to rewrite \eqref{eq:sparse_bilin_sys} as
\begin{equation} \label{eq:quad_sketch}
y_i = \dotprod{\sampsgnvec_i\sampsgnvec_i^T}{\matA} + \noise_i,\quad i\in[n].
\end{equation}
Chen et al. \cite{Chen15} recently showed that a sparse symmetric matrix -- not necessarily being 
zero on the diagonal -- can be recovered from 
its measurements of the form \eqref{eq:quad_sketch}, provided that $\sampsgn_j$ is 
sampled in an i.i.d. manner from a distribution satisfying
\begin{equation} \label{eq:quadmeas_momemt_conds}
\expec[\sampsgn_i] = 0,\quad \expec[\sampsgn_i^2] = 1 \quad \text{and} \quad \expec[\sampsgn_i^4] > 1. 
\end{equation}
Their recovery program\footnote{Note that one can remove the positive 
semi-definite constraint in their program and replace it with a symmetry enforcing constraint, the result remains 
unchanged (cf. remark in \cite[Section E]{Chen15})} is essentially constrained $\ell_1$ minimization (\cite[Eq.(4)]{Chen15}), 
the guarantees for which rely on the $\ell_2/\ell_1$ RIP for sparse symmetric matrices (\cite[Def. 2]{Chen15}).

Since our matrix $\matA$ in \eqref{eq:quad_sketch} is sparse and symmetric, it is natural for us 
to use their scheme. We do this, albeit with some technical changes:
\begin{itemize}
\item We will see that if $\matA$ is known to be zero on the diagonal (which is the case here), 
the fourth order moment condition in \eqref{eq:quadmeas_momemt_conds} is not needed. 
Hence one could, for instance, sample $\sampsgn$ from the 
symmetric \randvar distribution. 

\item Instead of optimizing over the set of symmetric matrices with zeros on the diagonal, we 
will perform $\ell_1$ minimization over the upper triangular entries of $\matA$, represented by $\veca$. 
These approaches are equivalent, but the latter has the computational advantage of having fewer constraints.
\end{itemize}
The analysis is based on the notion of $\ell_2/\ell_1$ RIP of the matrix $\matB$ in \eqref{eq:sparse_bilin_sys}, which is defined as follows.
%
\begin{definition} \label{def:rip_l2l1}
A matrix $\matB \in \matR^{n \times N}$ is said to satisfy the $\ell_2/\ell_1$ Restricted Isometry Property (RIP) of order $\totsparsity$ 
with constants $\riptolb_{\totsparsity} \in (0,1)$ and $\riptoub_{\totsparsity} > 0$ if
\begin{equation*}
(1-\riptolb_{\totsparsity}) \norm{\vecx}_2 \ \leq \ \frac{1}{n}\norm{\matB \vecx}_1 \ \leq \ (1 + \riptoub_{\totsparsity}) \norm{\vecx}_2
\end{equation*}
holds for all $\totsparsity$-sparse $\vecx \in \matR^{N}$.
\end{definition}
The above definition is analogous to the one in \cite[Def. 2]{Chen15} for sparse symmetric matrices. 
\paragraph{Bounded noise model.} 
Let us first consider the setting where the noise is bounded in the $\ell_1$ norm, i.e., 
$\|\noisevec\|_1 \leq \arbnoisebd$. We recover the estimate $\est{\veca}$ as a solution to 
the following program
\begin{equation} \label{eq:sparserec_prog_ordtwo}
\text{(P2)} \quad \min_{\vecz \in \matR^{{d \choose 2}}} \norm{\vecz}_1 \quad \text{s.t} \quad \norm{\vecy - \matB \vecz}_1 \leq \arbnoisebd.
\end{equation}
The next result shows that $\matB$ satisfies $\ell_2/\ell_1$ RIP with high probability
if the rows of $\matB$ are formed by independent \randvar vectors.
Consequently, the above program stably recovers $\veca$. 
%
\begin{theorem} \label{thm:arbnoise_bilin_rec}
Consider the sampling model in \eqref{eq:sparse_bilin_sys}, where 
the rows of $\matB$ are formed by independent \randvar vectors.
Then the following hold.
\begin{enumerate}
\item
There exist absolute constants $c_1,c_2,c_3>0$, such that the following is true. Let $\veca \in \matR^{{d \choose 2}}$. Then
\begin{equation}\label{eq:bilin_new1}
c_1 \norm{\veca}_2 \leq  \frac{1}{n} \norm{\matB \veca}_1 \leq c_2 \norm{\veca}_2
\end{equation}
holds with probability at least $1-\exp(-c_3 n)$.
%

\item With constants $c_1,c_2$ from \eqref{eq:bilin_new1}, there exist constants $c'_1,c'_2,c'_3 > 0$ such that if 
$n > c'_3 \totsparsity \log (d^2/\totsparsity)$, then 
$\matB$ satisfies $\ell_2/\ell_1$ RIP of order $\totsparsity$ with probability at least $1-c'_1 \exp(-c'_2 n)$ with constants
$\riptolb_{\totsparsity}$ and $\riptoub_{\totsparsity}$, which fulfill
\begin{equation}\label{eq:gammas}
1 - \riptolb_{\totsparsity} \geq \frac{c_1}{2}\quad\text{and} \quad 1 + \riptoub_{\totsparsity} \leq 2 c_2.
\end{equation}

\item If there exists a number $K>2k$ such that $\matB$ satisfies
$$
\frac{1-\riptolb_{k+K}}{\sqrt{2}}-(1+\riptoub_{K})\sqrt{\frac{k}{K}}\ge \beta>0
$$
for some $\beta>0$, then the solution $\est{\veca}$ to (P2) satisfies
\begin{equation*}
\norm{\est{\veca} - \veca}_2 \leq \Bigl(\frac{\tilde C_1}{\beta}+\tilde C_3\Bigr) \frac{\norm{\veca - \veca_{\totsparsity}}_1}{\sqrt{K}} + \frac{\tilde C_2}{\beta} \frac{\arbnoisebd}{n},
\end{equation*}
where $\veca_{\totsparsity}$ denotes the best $\totsparsity$-term approximation of $\veca$ and $\tilde C_1, \tilde C_2, \tilde C_3$ are universal positive constants.

\item There exist absolute constants $\tilde c_3, \tilde c_4, C_3, C_4 > 0$ such that if $n\ge \tilde c_3 k\log(d^2/k)$, 
then the solution $\est{\veca}$ to (P2) satisfies
\begin{equation*}
\norm{\est{\veca} - \veca}_2 \leq C_3 \frac{\norm{\veca - \veca_{\totsparsity}}_1}{\sqrt{\totsparsity}} + C_4 \frac{\arbnoisebd}{n},
\end{equation*}
simultaneously for all $\veca \in \matR^{{d \choose 2}}$ with probability at least $1-\exp(-\tilde c_4 n)$.
\end{enumerate}
\end{theorem}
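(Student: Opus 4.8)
The plan is to prove the four claims in order, since each feeds the next: the first is a two-sided concentration bound for a single fixed vector, the second upgrades it to the $\ell_2/\ell_1$ RIP by a net argument, the third is a deterministic recovery guarantee under RIP, and the fourth simply unions the two probabilistic events and invokes the deterministic step. For the first claim, I would use that each row of $\matB$ acts on $\veca$ as $\langle\betavect_i,\veca\rangle=\sampsgnvec_i^T\matA\sampsgnvec_i$, a homogeneous \randvar chaos of degree two with vanishing diagonal. A direct fourth-moment computation (only the diagonal pairings of the four indices survive) gives $\expec\langle\betavect,\veca\rangle^2=\norm{\veca}_2^2$, so the summands $|\langle\betavect_i,\veca\rangle|$ have mean comparable to $\norm{\veca}_2$: the upper bound $\expec|\langle\betavect,\veca\rangle|\le\norm{\veca}_2$ is Jensen, while the lower bound $\expec|\langle\betavect,\veca\rangle|\ge c\norm{\veca}_2$ follows from hypercontractivity of \randvar chaos, which yields $\expec\langle\betavect,\veca\rangle^4\le C(\expec\langle\betavect,\veca\rangle^2)^2$ and hence, by Paley--Zygmund (equivalently the reverse H\"older inequality $\expec|X|\ge(\expec X^2)^{3/2}/(\expec X^4)^{1/2}$), the claimed comparability. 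It then remains to concentrate the empirical average $\frac1n\sum_i|\langle\betavect_i,\veca\rangle|$ around its mean; since a degree-two chaos is sub-exponential (Hanson--Wright), a Bernstein inequality gives deviation probability $\exp(-c_3 n)$ when the deviation level is a fixed fraction of $\norm{\veca}_2$, producing \eqref{eq:bilin_new1}.

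For the second claim I would run the standard RIP-from-concentration argument, adapted to the $\ell_1$ output norm. Fixing a support $T\subseteq{[d]\choose 2}$ of size $\totsparsity$, I take a $\delta$-net $\mathcal N$ of the Euclidean unit sphere of vectors supported on $T$, with $|\mathcal N|\le(3/\delta)^{\totsparsity}$, apply the first claim with a union bound over $\mathcal N$ and over all $\binom{\binom d2}{\totsparsity}$ choices of $T$, and then remove the net error. Because $\vecx\mapsto\frac1n\norm{\matB\vecx}_1$ is a seminorm, the upper extension gives $\sup\frac1n\norm{\matB\vecx}_1\le c_2/(1-\delta)$ and the matching lower extension gives $\ge c_1-2c_2\delta$; choosing $\delta$ a small constant (e.g. $\delta\le c_1/(4c_2)$, and $\delta=1/2$ for the upper bound) produces exactly $1-\riptolb_{\totsparsity}\ge c_1/2$ and $1+\riptoub_{\totsparsity}\le 2c_2$ as in \eqref{eq:gammas}. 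The union bound succeeds once $n\gtrsim \totsparsity\log(d^2/\totsparsity)$, since $\log\binom{\binom d2}{\totsparsity}=O(\totsparsity\log(d^2/\totsparsity))$ dominates $\log|\mathcal N|=O(\totsparsity)$.

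The third claim is deterministic and follows the robust-null-space route of Cand\`es transcribed to the $\ell_2/\ell_1$ isometry. Writing $\vech=\est{\veca}-\veca$, feasibility of both $\veca$ and $\est{\veca}$ in (P2) gives $\frac1n\norm{\matB\vech}_1\le 2\arbnoisebd/n$, while $\ell_1$-optimality yields the cone bound $\norm{\vech_{T_0^c}}_1\le\norm{\vech_{T_0}}_1+2\norm{\veca-\veca_{\totsparsity}}_1$ with $T_0$ the top-$\totsparsity$ support. Sorting the remaining coordinates into blocks of size $K$ and using the shelling inequality $\sum_{j\ge2}\norm{\vech_{T_j}}_2\le\norm{\vech_{T_0^c}}_1/\sqrt K$, I combine the lower isometry on $T_0\cup T_1$ (order $\totsparsity+K$) with the upper isometry on the tail blocks (order $K$) and the feasibility bound; the constants $\sqrt2$ and $\sqrt{\totsparsity/K}$ in the hypothesis are precisely what is needed for the resulting chain of inequalities to close with a positive coefficient $\beta$, after which one reads off the stated error bound in $\norm{\veca-\veca_{\totsparsity}}_1/\sqrt K$ and $\arbnoisebd/n$.

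Finally, the fourth claim follows by choosing $K=C\totsparsity$ for a constant $C>2$ large enough that, with the isometry constants from the second claim, the quantity $\frac{1-\riptolb_{\totsparsity+K}}{\sqrt2}-(1+\riptoub_K)\sqrt{\totsparsity/K}\ge\frac{c_1}{2\sqrt2}-2c_2/\sqrt C$ is a fixed positive $\beta$; since $\totsparsity+K,K=\Theta(\totsparsity)$, a union bound of the two RIP events from the second claim holds with probability $1-\exp(-\tilde c_4 n)$ as soon as $n\ge\tilde c_3\,\totsparsity\log(d^2/\totsparsity)$, and the third claim then delivers the conclusion. I expect the main obstacle to be the first claim, specifically the lower bound $\expec|\langle\betavect,\veca\rangle|\gtrsim\norm{\veca}_2$: the upper tail and the upper isometry are routine, but controlling the small-ball behaviour of a degree-two \randvar chaos uniformly in $\veca$ is where the hypercontractivity input is essential, and where the zero-diagonal structure of $\matA$ --- which is what lets us dispense with the fourth-moment condition \eqref{eq:quadmeas_momemt_conds} --- is genuinely used.
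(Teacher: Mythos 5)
Your proposal follows essentially the same route as the paper's proof: comparability of $\expec|\sampsgnvec^T\matA\sampsgnvec|$ to $\norm{\veca}_2$ via the second/fourth-moment reverse H\"older inequality (the paper derives the fourth-moment bound by integrating the Hanson--Wright tail where you invoke hypercontractivity, an immaterial difference) plus sub-exponential Bernstein concentration, then the standard $\epsilon$-net extension to $\ell_2/\ell_1$ RIP, the Cand\`es/Chen-style shelling argument for the deterministic recovery bound, and finally the choice $K=\Theta\bigl((c_2/c_1)^2\totsparsity\bigr)$ to make $\beta>0$. All steps are correct and match the paper's argument.
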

\jvyb{We sketch the proof of Theorem \ref{thm:arbnoise_bilin_rec}, which is essentially based on \cite{Chen15},
in Appendix \ref{app:proof_thm_sparse_bilin}.}

%
%
\paragraph{Gaussian noise model.} 
We now consider the scenario where the noise samples are i.i.d. Gaussian with variance $\sigma^2$, i.e, 
$\noise_i \sim \calN(0,\sigma^2)$ i.i.d. for all $i\in[n]$. Using standard concentration inequalities 
for sub-Gaussian random variables (see \jvyb{Proposition \ref{prop:gaussian_norm_bd}(\ref{prop:gauss_l1norm_bd}) in Appendix \ref{app:std_conc_results})}, 
one can show that $\norm{\noisevec}_1 = \Theta(\sigma n)$ with high probability. 
This leads to the following straightforward corollary of Theorem \ref{thm:arbnoise_bilin_rec}.
\begin{corollary} \label{cor:gauss_noise_bilin_rec}
For constants $\tilde c_3, \tilde c_4, C_3, C_4 > 0$ defined in Theorem \ref{thm:arbnoise_bilin_rec}, the following is true.
Consider the sampling model in \eqref{eq:sparse_bilin_sys} for a given $\veca \in \matR^{{d \choose 2}}$,
where 
the rows of $\matB$ are formed by independent \randvar vectors,
and  $\noise_i \sim \calN(0,\sigma^2)$ i.i.d. for all $i\in[n]$ with $n \ge  \tilde c_3 k\log(d^2/k)$.
For some $\varepsilon \in (0,1)$, let $\est{\veca}$ be a solution to (P2) with $\arbnoisebd = (1+\varepsilon)\sigma n$.
Then
\begin{equation*}
\norm{\est{\veca} - \veca}_2 \leq C_3 \frac{\norm{\veca - \veca_{\totsparsity}}_1}{\sqrt{\totsparsity}} + C_4 \sqrt{\frac{2}{\pi}}(1+\varepsilon) \sigma
\end{equation*}  
with probability at least $1 - \exp(- \tilde c_4 n) - e \cdot \exp\left(-\tilde c_5 \varepsilon^2 n\right)$, for some 
constant $\tilde c_5 > 0$. Here, $\veca_{\totsparsity}$ denotes the best $\totsparsity$-term approximation of $\veca$.
\end{corollary}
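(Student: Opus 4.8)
The plan is to mirror the proof of Corollary \ref{cor:gauss_noise_lin_rec} for the linear case, reducing the Gaussian setting to the bounded-noise guarantee of Theorem \ref{thm:arbnoise_bilin_rec}(4) by controlling $\norm{\noisevec}_1$ with high probability. The point is that Theorem \ref{thm:arbnoise_bilin_rec}(4) is a purely deterministic statement once (i) the measurement matrix $\matB$ satisfies the requisite $\ell_2/\ell_1$ RIP, and (ii) the true vector $\veca$ is feasible for the program (P2) in \eqref{eq:sparserec_prog_ordtwo}, i.e., $\norm{\vecy-\matB\veca}_1=\norm{\noisevec}_1\le \arbnoisebd$. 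Since $n\ge \tilde c_3 k\log(d^2/k)$ by hypothesis, event (i) already holds with probability at least $1-\exp(-\tilde c_4 n)$ by Theorem \ref{thm:arbnoise_bilin_rec}(4). It therefore remains only to choose the noise budget $\arbnoisebd$ so that (ii) holds with the advertised probability.

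For the noise control, I would first compute the expectation: since each $\noise_i\sim\calN(0,\sigma^2)$, the $\abs{\noise_i}$ are i.i.d. half-normal random variables with $\expec\abs{\noise_i}=\sigma\sqrt{2/\pi}$, so $\expec\norm{\noisevec}_1=\sigma n\sqrt{2/\pi}$. This is precisely where the factor $\sqrt{2/\pi}$ in the conclusion originates. Because each $\abs{\noise_i}$ is sub-Gaussian, the sum $\norm{\noisevec}_1=\sum_{i=1}^n\abs{\noise_i}$ concentrates sharply around its mean; invoking Proposition \ref{prop:gaussian_norm_bd}(\ref{prop:gauss_l1norm_bd}) from Appendix \ref{app:std_conc_results} yields, for any $\varepsilon\in(0,1)$, a bound of the form $\norm{\noisevec}_1\le (1+\varepsilon)\sqrt{2/\pi}\,\sigma n$ with probability at least $1-e\cdot\exp(-\tilde c_5\varepsilon^2 n)$ for some constant $\tilde c_5>0$. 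Taking the noise budget equal to this high-probability upper bound guarantees that $\veca$ is feasible for (P2) on this event, and substituting $\arbnoisebd/n=(1+\varepsilon)\sqrt{2/\pi}\,\sigma$ into the error estimate of Theorem \ref{thm:arbnoise_bilin_rec}(4) produces exactly the stated bound $C_3\norm{\veca-\veca_{\totsparsity}}_1/\sqrt{\totsparsity}+C_4\sqrt{2/\pi}(1+\varepsilon)\sigma$.

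Finally, I would combine the two events by a union bound: the RIP event fails with probability at most $\exp(-\tilde c_4 n)$, and the noise-concentration event fails with probability at most $e\cdot\exp(-\tilde c_5\varepsilon^2 n)$, so both hold simultaneously with probability at least $1-\exp(-\tilde c_4 n)-e\cdot\exp(-\tilde c_5\varepsilon^2 n)$, matching the claim. The only genuinely non-routine ingredient is the sub-Gaussian tail estimate for $\norm{\noisevec}_1$, but this is exactly the content of Proposition \ref{prop:gaussian_norm_bd}(\ref{prop:gauss_l1norm_bd}); everything else is bookkeeping of constants and a union bound, so I do not anticipate any substantive obstacle here.
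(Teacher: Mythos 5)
Your proposal is correct and follows exactly the paper's route: the paper's own proof is the one-line combination of Proposition \ref{prop:gaussian_norm_bd}(\ref{prop:gauss_l1norm_bd}) with Theorem \ref{thm:arbnoise_bilin_rec}, i.e., sub-Gaussian concentration to bound $\norm{\noisevec}_1$, feasibility of $\veca$ for (P2), the deterministic bounded-noise guarantee, and a union bound over the RIP and concentration events — precisely the steps you spell out.

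One remark: your bookkeeping exposes a small inconsistency in the corollary as stated, rather than any flaw in your argument. You take $\arbnoisebd = (1+\varepsilon)\sqrt{2/\pi}\,\sigma n$, which is the choice that actually produces the advertised error term $C_4\sqrt{2/\pi}(1+\varepsilon)\sigma$ through the $C_4\arbnoisebd/n$ term of Theorem \ref{thm:arbnoise_bilin_rec}(4). The corollary instead sets $\arbnoisebd=(1+\varepsilon)\sigma n$; with that (larger) budget $\veca$ is still feasible on the concentration event, since $\sqrt{2/\pi}<1$, but the mechanical application of Theorem \ref{thm:arbnoise_bilin_rec}(4) then yields only $C_4(1+\varepsilon)\sigma$, without the $\sqrt{2/\pi}$ gain. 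So either the budget should read $(1+\varepsilon)\sqrt{2/\pi}\,\sigma n$ (your version) or the error constant should lose the $\sqrt{2/\pi}$; up to this constant the statement and your proof agree.
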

\begin{proof}
Use \jvyb{Proposition \ref{prop:gaussian_norm_bd}(\ref{prop:gauss_l1norm_bd}) from Appendix \ref{app:std_conc_results}} with Theorem \ref{thm:arbnoise_bilin_rec}.
\end{proof}
%
%
\begin{remark} \label{rem:comp_cost_sparse_rec}
\jvyb{Both (P1) and (P2) are convex programs where (P1) can be cast as a second order cone program (SOCP) (eg., \cite{l1magic}) 
and (P2) can be easily written as a linear program (LP). Hence they can be solved to arbitrary accuracy in 
time polynomial in $n$ and $d$, using for instance interior point algorithms (eg., \cite{NestNemi94,socp_goldfarb03}). In practice, 
one often considers non-convex alternatives such as Iterative Hard Thresholding (IHT) (eg. \cite{BLUMENSATH2009,Blumensath10}) 
which typically have low computational cost.}
\end{remark}

\subsection{Sparse multilinear functions} \label{subsec:sparse_multilin_funcs}
For $p,d \in \mathbb{N}$ with $p \leq d$, let $\veca \in \matR^{{d \choose p}}$ be a vector 
with entries indexed by ${[d] \choose p}$ (sorted in lexicographic order). 
We denote again the entry of $\veca$ at index $\vecj = (j_1,\dots,j_p) \in {[d] \choose p}$ by $a_{\vecj}$
and consider a multilinear function $g: \matR^d \rightarrow \matR$ in $d$ variables 
$\sampsgnvec = (\sampsgn_1 \dots \sampsgn_d)^T$ such that  
\begin{equation} \label{eq:multlin_def}
g(\sampsgnvec) = \sum_{\vecj = (j_1,\dots,j_p) \in {[d] \choose p}} \sampsgn_{j_1} \sampsgn_{j_2} \cdots \sampsgn_{j_p} a_{\vecj}. 
\end{equation}
We will refer to \eqref{eq:multlin_def} as a multilinear 
function of order $p$. For clarity of notation, we will write \eqref{eq:multlin_def} as 
$g(\sampsgnvec) = \dotprod{\betavecp}{\veca}$, where $\betavecp \in \matR^{{d \choose p}}$, 
and the entry of $\betavecp$ at index $\vecj = (j_1,\dots,j_p) \in {[d] \choose p}$ 
being $\sampsgn_{j_1} \sampsgn_{j_2} \cdots \sampsgn_{j_p}$.

We are again interested in recovering the unknown coefficient vector $\veca$ from 
$n$ noisy samples $y_i = g(\sampsgnvec_i) + \noise_i, i\in[n]$, where $\noise_i$ refers to the
noise in the $i^{\text{th}}$ sample. Arranging the samples together, we arrive at the 
linear system $\vecy = \matB \veca + \noisevec$, where
\begin{equation} \label{eq:sparse_multlin_sys}
\underbrace{\begin{pmatrix}
  y_1  \\ \vdots \\ \vdots \\ y_n
 \end{pmatrix}}_{\vecy \in \matR^{n}}
= 
\underbrace{\begin{pmatrix}
  {\betavecp_1}^{T}  \\ \vdots \\ \vdots \\ {\betavecp_n}^{T}
 \end{pmatrix}}_{\matB \in \matR^{n \times {d \choose p}}} \veca 
+ \underbrace{\begin{pmatrix}
  \noise_1  \\ \vdots \\ \vdots \\ \noise_n
 \end{pmatrix}}_{\noisevec}.
\end{equation}
We denote by $\totsupp=\Bigl\{(j_1,j_2,\dots,j_p) \in {[d] \choose p} : a_{(j_1,j_2,\dots,j_p)} \neq 0\Bigr\}$ the support of $\veca$ 
%
%
and we are especially interested in the setting where $\veca$ is sparse, i.e., $\abs{\totsupp} = \totsparsity \ll {d \choose p}$.
Our aim is to estimate $\veca$ with a small number of samples $n$. 

\paragraph{Bounded noise model.} 
Let us consider the scenario where the noise is bounded in the $\ell_2$ norm, i.e., $\|\noisevec\|_2 \leq \arbnoisebd$. 
We will recover an estimate $\est{\veca}$ to $\veca$ as a solution of (P1).
The following result provides a bound on the estimation error $\norm{\est{\veca} - \veca}_2$. 
\begin{theorem}(\cite[Theorem 4; Lemmas 4, 5]{Nazer2010}) \label{thm:arbnoise_multlin_rec}
Let $\dchp = {d \choose p}$ 
and let $\matB$ be defined as in \eqref{eq:sparse_multlin_sys} with rows formed by independent \randvar vectors.
Then, for any $\riptt \in (0,1)$, there exist constants $c_6, c_7 > 0$ depending on $\riptt$ such that 
the matrix $\matB$ satisfies $\ell_2/\ell_2$ RIP of order $\totsparsity$, with $\riptt_{\totsparsity} \leq \riptt$, 
\begin{enumerate} 
\item[(a)] with probability at least $1-\exp(-c_7  n/\totsparsity^2)$ if $n \geq c_6 \totsparsity^2 \log D$ \label{itm:gerschgorin_rip}
\item[(b)] and with probability at least $1-\exp(-c_7 \min\set{n/3^{2p}, n/\totsparsity})$ if 
$$n \geq c_6  \max\set{3^{2p} \totsparsity \log(D/\totsparsity), \totsparsity^2 \log(D/\totsparsity)}.
$$  
\end{enumerate}
\end{theorem}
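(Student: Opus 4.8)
The plan is to establish the two sufficient conditions (a) and (b) through two complementary mechanisms: a coherence/Gershgorin bound that is cheap in the order $p$ but costs a factor $\totsparsity^2$, and a covering argument driven by a chaos concentration inequality that is linear in $\totsparsity$ but pays a factor $3^{2p}$. Throughout, the essential structural fact is that although the entries \emph{within} a single row $\betavecp_i$ are highly dependent --- they are all monomials in one underlying \randvar vector $\sampsgnvec_i$ --- distinct columns remain \emph{uncorrelated}. Concretely, for $\vecj \neq \vecj' \in \binom{[d]}{p}$ the product $\sampsgn_{j_1}\cdots\sampsgn_{j_p}\sampsgn_{j_1'}\cdots\sampsgn_{j_p'}$ collapses (using $\sampsgn^2 = 1$) to a product of the \randvar variables indexed by the nonempty symmetric difference of $\vecj$ and $\vecj'$, and is therefore itself a mean-zero \randvar variable; when $\vecj = \vecj'$ the product is identically $1$. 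Hence $\expec[\frac1n \matB^T\matB]$ is the identity, which is what makes the matrix isotropic in expectation.

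For part (a), I would form the empirical Gram matrix $G = \frac1n\matB^T\matB$. By the observation above its diagonal is exactly $1$, and each off-diagonal entry $G_{\vecj,\vecj'}$ is an average of $n$ i.i.d.\ \randvar variables. Hoeffding's inequality bounds a single such entry by $t$ except with probability $2\exp(-nt^2/2)$, and a union bound over the at most $\dchp^2$ pairs controls the coherence $\mu := \max_{\vecj\neq\vecj'}|G_{\vecj,\vecj'}|$. Choosing $t = \riptt/\totsparsity$ forces $\mu \le \riptt/\totsparsity$ once $n \gtrsim \totsparsity^2 \log \dchp / \riptt^2$; by Gershgorin's disc theorem every $\totsparsity\times\totsparsity$ principal submatrix of $G$ then has all eigenvalues in $[1-(\totsparsity-1)\mu,\,1+(\totsparsity-1)\mu]\subseteq[1-\riptt,\,1+\riptt]$, which is precisely $\ell_2/\ell_2$ RIP of order $\totsparsity$ with $\riptt_{\totsparsity}\le\riptt$. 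Absorbing the $\dchp^2$ factor into the exponent (using $n\ge c_6\totsparsity^2\log\dchp$ with $c_6$ large) rewrites the failure probability in the stated form $\exp(-c_7 n/\totsparsity^2)$.

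For part (b), the sharper dependence on $\totsparsity$ comes from a net argument. Fix a $\totsparsity$-sparse unit vector $\vecx$; then $\frac1n\norm{\matB\vecx}_2^2 = \frac1n\sum_{i=1}^n Z_i^2$, where the $Z_i = \dotprod{\betavecp_i}{\vecx}$ are i.i.d.\ copies of a degree-$p$ \randvar chaos $Z$ with $\expec[Z^2]=\norm{\vecx}_2^2 = 1$ (again by orthogonality of distinct monomials). The crucial probabilistic input is hypercontractivity (Bonami--Beckner): a degree-$p$ multilinear \randvar polynomial satisfies $\norm{Z}_4 \le 3^{p/2}\norm{Z}_2$, so $\expec[Z^4]\le 3^{2p}$, and more generally its higher moments grow slowly enough that $Z^2$ is sub-exponential with parameters governed by $3^{2p}$. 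A Bernstein-type bound then yields $\prob\big[|\tfrac1n\sum_i Z_i^2 - 1| > \riptt\big] \le 2\exp\big(-c\,n\min\{\riptt^2/3^{2p},\,\riptt/3^p\}\big)$. Covering the unit sphere of each $\totsparsity$-dimensional coordinate subspace by a net of size $(3/\epsilon)^{\totsparsity}$ and union bounding over the $\binom{\dchp}{\totsparsity}\le (e\dchp/\totsparsity)^{\totsparsity}$ supports --- i.e.\ over $\exp(\Theta(\totsparsity\log(\dchp/\totsparsity)))$ points --- the standard net-to-RIP passage shows the deviation is uniformly below $\riptt$ once $n\gtrsim 3^{2p}\totsparsity\log(\dchp/\totsparsity)$, giving probability $\exp(-c_7 n/3^{2p})$. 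The second term $\totsparsity^2\log(\dchp/\totsparsity)$ inside the $\max$ is the coherence requirement of part (a) reused to control the off-diagonal (bilinear) part of $\frac1n\norm{\matB\vecx}_2^2$, which dominates when $\totsparsity \gtrsim 3^{2p}$; taking the larger of the two requirements, and the correspondingly smaller exponent $\min\{n/3^{2p},\,n/\totsparsity\}$, produces the stated bound. I expect the main obstacle to be exactly this chaos concentration: because the coordinates of $\betavecp_i$ are not independent, $Z$ is not sub-Gaussian and $Z^2$ is genuinely heavy-tailed for large $p$, so no off-the-shelf sub-Gaussian RIP theorem applies. Controlling the moments of $Z$ at the right rate --- which is what hypercontractivity supplies --- and tracking how the order-$p$ penalty $3^{2p}$ propagates through the Bernstein bound into the final probability $\min\{n/3^{2p},\,n/\totsparsity\}$ is the delicate part; the Gershgorin route in (a), by contrast, only ever sees pairwise column correlations and is comparatively routine.
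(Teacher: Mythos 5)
Your treatment of part (a) is correct and is exactly the mechanism the paper attributes to \cite[Theorem 4]{Nazer2010}: the Gram matrix $\frac{1}{n}\matB^T\matB$ has unit diagonal, every off-diagonal entry is an average of $n$ i.i.d.\ mean-zero $\pm 1$ variables (a product of two distinct Rademacher monomials collapses onto its nonempty symmetric difference), and Hoeffding, a union bound over at most $\dchp^2$ pairs, and Gershgorin finish the argument. The genuine gap is in part (b), at the fixed-vector concentration step. You claim that $Z^2$, for $Z=\dotprod{\betavecp_1}{\vecx}$ a degree-$p$ Rademacher chaos, is ``sub-exponential with parameters governed by $3^{2p}$'' and deduce the bound $2\exp\bigl(-cn\min\{\riptt^2/3^{2p},\riptt/3^p\}\bigr)$. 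Hypercontractivity does give $\expec[Z^4]\le 3^{2p}$, but it does not make $Z^2$ sub-exponential: a degree-$p$ chaos has tails of order $\exp(-t^{2/p})$, so $Z^2$ has tails of order $\exp(-t^{1/p})$, strictly heavier than exponential for $p\ge 2$, and no Bernstein inequality with scale parameter $3^p$ follows from moment growth alone. In fact your claimed inequality fails exactly in the regime where the second terms of the theorem matter, $\totsparsity\gg 3^{2p}$: taking $\vecx$ uniform on a product support $A_1\times\dots\times A_p$ with $\abs{A_i}=\totsparsity^{1/p}$, the chaos factors as $Z=\totsparsity^{-1/2}\prod_{i}\bigl(\sum_{j\in A_i}\sampsgn_j\bigr)$, and a multi-jump lower bound (about $n\riptt/\totsparsity$ samples each attaining $Z_i^2\approx\totsparsity$) shows the deviation probability is at least $\exp\bigl(-c\,p\,n\riptt\,\totsparsity^{1/p-1}\bigr)$ up to constants, which dwarfs $\exp(-cn\riptt^2/3^{2p})$ once $\totsparsity^{1-1/p}\gg 3^{2p}$. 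Your fallback explanation for the $\totsparsity^2\log(\dchp/\totsparsity)$ term and the $n/\totsparsity$ exponent --- ``reusing the coherence requirement of part (a)'' --- also cannot be right, since that mechanism produces the exponent $n/\totsparsity^2$, not the $n/\totsparsity$ stated in the theorem.

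The missing idea is boundedness, not sub-exponentiality. For a unit-norm $\totsparsity$-sparse $\vecx$ and $\pm 1$ measurement entries, $\abs{Z_i}\le\norm{\vecx}_1\le\sqrt{\totsparsity}$ almost surely, so the summands $Z_i^2-1$ are bounded by $\totsparsity$ in absolute value and have variance at most $\expec[Z_i^4]\le 3^{2p}$ (this is where your hypercontractivity input enters, correctly). Bernstein's inequality for bounded variables then gives precisely $\prob\bigl[\abs{\tfrac1n\sum_i Z_i^2-1}>\riptt\bigr]\le 2\exp\bigl(-c\,n\min\{\riptt^2/3^{2p},\,\riptt/\totsparsity\}\bigr)$, whose two branches are the two branches of the theorem. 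Feeding this into your (otherwise standard) net-and-support union bound over $\exp(\Theta(\totsparsity\log(\dchp/\totsparsity)))$ points yields simultaneously both terms of the stated maximum, $3^{2p}\totsparsity\log(\dchp/\totsparsity)$ and $\totsparsity^2\log(\dchp/\totsparsity)$, together with the stated success probability $1-\exp(-c_7\min\{n/3^{2p},n/\totsparsity\})$. This is the content of \cite[Lemmas 4, 5]{Nazer2010}, which the paper invokes rather than reproves; with this single correction the rest of your argument for part (b) goes through.
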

The bounds on $n$ in the theorem are obtained via the application of two very different methodologies.
The bound in part (a) 
is a consequence of bounding the eigenvalues of the $\totsparsity \times \totsparsity$ Gram 
matrices $\frac{1}{n}\matB_{\totsupp}^T \matB_{\totsupp}$ for all $\totsupp$ (here $\matB_{\totsupp}$ is the submatrix 
of $\matB$ with column indices in $\totsupp$) using Gershgorin's disk theorem,  
along with standard concentration inequalities \cite[Theorem 4]{Nazer2010}. 
The bound in part (b) 
involves the usage of tail estimates 
for Rademacher chaos variables and follows from \cite[Lemmas 4, 5]{Nazer2010}. 
\jvyb{We also note that in Theorem \ref{thm:arbnoise_multlin_rec}, the number of measurements $n$ scales 
quadratically with the sparsity parameter $\totsparsity$. However when $p=2$, we can see that Theorem \ref{thm:arbnoise_bilin_rec} 
is stronger since $n$ therein scales linearly with $\totsparsity$.}
\begin{remark}
The analysis in \cite[Section A]{Nazer2010} derives RIP bounds in terms of the so-called combinatorial dimension.
However in our opinion, this analysis has several inaccuracies because of which we are not sure if the corresponding bounds are correct.
Hence we do not state those bounds here. 
\end{remark}
%
%

%
%

\paragraph{Gaussian noise model.} 
In the scenario where the noise in the samples is i.i.d. Gaussian, we arrive at a 
statement similar to Corollary \ref{cor:gauss_noise_lin_rec}, hence we do not discuss this further.

\section{Putting it together: final theoretical guarantees} \label{sec:put_togeth_final}
We are now in a position to combine our efforts from the preceding sections and to state the final results 
for recovery of the support sets. As before, we state this separately for the univariate, bivariate, and general 
multivariate cases.
%
\subsection{Univariate case} \label{subsec:final_res_univ}
We begin with the univariate case considered in Section \ref{sec:spam_univ}. Recall Algorithm \ref{algo:univ_spam} 
for recovering the support $\univsupp$. The ensuing Lemma \ref{lemma:univ_spam_algo} gave sufficient conditions 
for exact recovery provided $\csalg$ is $\epsilon$-accurate at each $\vecx \in \baseset$, for small enough $\epsilon$. Instantiating 
$\csalg$ with (P1) in \eqref{eq:l1min_quadconstr} gives the final results below. Let us start with the bounded noise model.
%
\paragraph{Bounded noise model.} In this noise model, querying $f$ at $\vecx$ returns $f(\vecx) + \noise$, where 
$\abs{\noise} \leq \noisebd$. For the linear system \eqref{eq:lin_sys_univspam}, this means that 
$\abs{\noise_i^{+}}, \abs{\noise_i^{-}} \leq \noisebd$, $i\in[n]$, and 
hence $\|\noisevec\|_{\infty} \leq 2\noisebd$. The following theorem shows that if $\noisebd$ is sufficiently small, then 
Algorithm \ref{algo:univ_spam} recovers $\univsupp$ exactly provided the parameters $m,n,\epsilon$ are chosen in a suitable way. 
%
%
\begin{theorem} \label{thm:main_univ_arbnoise}
For the bounded noise model with the noise uniformly bounded by $\noisebd$, 
consider Algorithm \ref{algo:univ_spam} with $\csalg$ instantiated with $(P1)$ in \eqref{eq:l1min_quadconstr}. 
If $\arbnoisebd = 2\noisebd \sqrt{n}$ in $(P1)$,
%
\begin{equation*}
 \noisebd < \frac{\idenconst_1}{6 C_2}, \quad m \geq (3L/\idenconst_1)^{1/\alpha}, \quad \text{and} \quad  
 n \geq \tilde{c}_1  \abs{\univsupp} \log (d/\abs{\univsupp})
\end{equation*}
%
are satisfied, it follows for the choice $\epsilon = 2 C_2 \noisebd$ that $\est{\univsupp} = \univsupp$, 
with probability at least $1 - 2\exp(-\tilde{c}_2 n)$. 
The total number of queries made is  
$2(2m+1)n = \Omega\left(\jvyb{\bar{c}_1}\abs{\univsupp} \log \left(\frac{d}{\abs{\univsupp}}\right)\right)$ \jvyb{where 
$\bar{c}_1 > 1$ depends on $L,\idenconst_1,\alpha$}.
\end{theorem}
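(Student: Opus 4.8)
The plan is to instantiate $\csalg$ with the program (P1) from \eqref{eq:l1min_quadconstr} and then verify that the hypotheses of Lemma \ref{lemma:univ_spam_algo} are satisfied; once that is done the lemma delivers $\est{\univsupp} = \univsupp$ and the query count $2(2m+1)n$ at no extra cost. Since $m \geq (3L/\idenconst_1)^{1/\alpha}$ is assumed directly, the entire work reduces to showing that (P1) is $\epsilon$-accurate at \emph{every} $\vecx \in \baseset$ with $\epsilon = 2C_2\noisebd < \idenconst_1/3$. The first elementary step is to control the noise entering the linear system \eqref{eq:lin_sys_univspam}: each query is corrupted by at most $\noisebd$ in magnitude, so each coordinate $\noise_i^{+} - \noise_i^{-}$ of $\noisevec$ is bounded by $2\noisebd$, whence $\norm{\noisevec}_2 \leq \norm{\noisevec}_\infty\sqrt{n} \leq 2\noisebd\sqrt{n} = \arbnoisebd$. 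This legitimizes the prescribed choice $\arbnoisebd = 2\noisebd\sqrt{n}$ as a valid feasibility radius in (P1).

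The conceptual heart of the argument is that a single high-probability event governs accuracy at all base points simultaneously. Indeed, the measurement matrix $\matB$ in Algorithm \ref{algo:univ_spam} is generated once (line \ref{algounivspam:gen_betas}) and reused across the loop over $\vecx \in \baseset$. I would therefore apply Theorem \ref{thm:arbnoise_lin_rec}(1) a single time, at RIP order $2\abs{\univsupp}$ and with the fixed constant $\riptt = \sqrt{2}-1$: this yields constants $c_1,c_2 > 0$ such that, provided $n \geq c_1 \cdot 2\abs{\univsupp}\log(d/(2\abs{\univsupp}))$, the matrix $\matB$ obeys $\ell_2/\ell_2$ RIP with $\riptt_{2\abs{\univsupp}} < \sqrt{2}-1$ with probability at least $1 - 2\exp(-c_2 n)$. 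Setting $\tilde{c}_1 = 2c_1$ and $\tilde{c}_2 = c_2$, and using $\log(d/(2\abs{\univsupp})) \leq \log(d/\abs{\univsupp})$, the assumed bound $n \geq \tilde{c}_1 \abs{\univsupp}\log(d/\abs{\univsupp})$ suffices to place us on this RIP event.

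Conditioning on that event, I would invoke Theorem \ref{thm:arbnoise_lin_rec}(2) at each fixed $\vecx \in \baseset$. The crucial simplification is that the true signal $\origsparsvec(\vecx)$ is \emph{exactly} $\abs{\univsupp}$-sparse, so its best $\abs{\univsupp}$-term approximation error vanishes and the first term of the bound drops out; the residual term is
$$
\norm{\est{\origsparsvec}(\vecx) - \origsparsvec(\vecx)}_\infty \leq \norm{\est{\origsparsvec}(\vecx) - \origsparsvec(\vecx)}_2 \leq C_2\frac{\arbnoisebd}{\sqrt{n}} = C_2\frac{2\noisebd\sqrt{n}}{\sqrt{n}} = 2C_2\noisebd =: \epsilon,
$$
which is independent of $n$. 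Thus (P1) is $\epsilon$-accurate at every $\vecx \in \baseset$ on the RIP event. The assumption $\noisebd < \idenconst_1/(6C_2)$ gives precisely $\epsilon = 2C_2\noisebd < \idenconst_1/3$, so the accuracy hypothesis of Lemma \ref{lemma:univ_spam_algo} is met; combined with $m \geq (3L/\idenconst_1)^{1/\alpha}$ the lemma yields $\est{\univsupp} = \univsupp$, with the stated failure probability $2\exp(-\tilde{c}_2 n)$ inherited entirely from the RIP event. The query count $2(2m+1)n$ is immediate from the lemma, and since $m$ is a constant depending only on $L,\idenconst_1,\alpha$ while $n = \Omega(\abs{\univsupp}\log(d/\abs{\univsupp}))$, it is $\Omega(\bar{c}_1\abs{\univsupp}\log(d/\abs{\univsupp}))$ with $\bar{c}_1$ absorbing $(2m+1)$ and $\tilde{c}_1$.

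I expect no genuine obstacle here — the result is a clean composition of the sampling identity, the sparse-recovery guarantee, and the thresholding lemma. The only points demanding care, rather than difficulty, are (i) recognizing that RIP is a property of the \emph{shared} matrix $\matB$, so that no union bound over the $\abs{\baseset}$ base points is incurred, and (ii) tracking that the scaling $\arbnoisebd = 2\noisebd\sqrt{n}$ cancels the $1/\sqrt{n}$ in the estimate to leave a constant accuracy $\epsilon = 2C_2\noisebd$, which is exactly what makes the threshold condition $\epsilon < \idenconst_1/3$ achievable uniformly in $n$.
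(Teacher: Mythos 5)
Your proof is correct and follows essentially the same route as the paper: bound the noise by $\norm{\noisevec}_2 \leq 2\noisebd\sqrt{n}$, establish the $\ell_2/\ell_2$ RIP of the shared matrix $\matB$ once via Theorem \ref{thm:arbnoise_lin_rec}(1), condition on that single event so the deterministic guarantee of Theorem \ref{thm:arbnoise_lin_rec}(2) gives $\epsilon = 2C_2\noisebd$ accuracy simultaneously at all $\vecx \in \baseset$, and then invoke Lemma \ref{lemma:univ_spam_algo}. Your write-up is in fact slightly more explicit than the paper's on two points it absorbs into constants — the order-$2\abs{\univsupp}$ RIP sample-complexity bookkeeping and the vanishing of the best-$k$-term approximation error for exactly sparse $\origsparsvec(\vecx)$ — but the argument is the same.
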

\begin{proof}
The proof follows by combining Lemma \ref{lemma:univ_spam_algo} with Theorem \ref{thm:arbnoise_lin_rec}. 
Since $\abs{\noise_i^{+}}, \abs{\noise_i^{-}} \leq \noisebd$ in \eqref{eq:lin_sys_univspam} for $i\in[n]$, we obtain
$\|\noisevec\|_2 \leq \sqrt{n} \|\noisevec\|_{\infty} \leq 2\noisebd\sqrt{n} $. Therefore 
we set $\csalg = (P1)$ with $\arbnoisebd = 2\noisebd\sqrt{n}$.

As a consequence of Theorem \ref{thm:arbnoise_lin_rec}, there exist constants 
$\tilde{c}_1, \tilde{c}_2 > 0$ (depending only on ${c}_1, c_2 > 0$ defined therein) so that,
for $n \geq \tilde{c}_1  \abs{\univsupp} \log (d/\abs{\univsupp})$, $\matB$ satisfies 
$\ell_2/\ell_2$ RIP with $\riptt_{2\abs{\univsupp}} < \sqrt{2}-1$ with probability at least 
$1-2\exp(-\tilde{c}_2 n)$. Conditioning on this event, it follows from Theorem \ref{thm:arbnoise_lin_rec} 
that
\begin{equation*}
  \norm{\est{\origsparsvec}(\vecx) - \origsparsvec(\vecx)}_{\infty} 
\ \leq\  \norm{\est{\origsparsvec}(\vecx) - \origsparsvec(\vecx)}_{2} \ \leq \,\jvyb{2C_2 \noisebd} \quad \text{for all}\quad \vecx \in \baseset. 
\end{equation*}
%
Hence, we see that with probability at least $1-2\exp(-\tilde{c}_2 n)$, 
$\csalg$ is $\epsilon = 2 C_2 \noisebd$ accurate at each $\vecx \in \baseset$. Now invoking 
Lemma \ref{lemma:univ_spam_algo}, it follows that if 
\begin{equation*}
 2 C_2 \noisebd < \frac{\idenconst_1}{3} \Leftrightarrow \noisebd < \frac{\idenconst_1}{6C_2}
\end{equation*}
holds, then the stated choice of $\epsilon$ and $m$ ensures exact recovery. This completes the proof.
\end{proof}

\paragraph{Gaussian noise model.} We now move to the Gaussian noise model, wherein 
querying $f$ at $\vecx$ returns $f(\vecx) + \noise$; $\noise \sim \calN(0,\sigma^2)$. Moreover, the noise samples 
are independent across the queries. For the linear system \eqref{eq:lin_sys_univspam}, this means that
$\noise_i\sim\calN(0,2\sigma^2), i\in[n]$ are i.i.d. random variables.
The following theorem essentially shows that if the noise variance $\sigma^2$ is sufficiently small, then 
Algorithm \ref{algo:univ_spam} recovers $\univsupp$ exactly provided the parameters $m,n,\epsilon$ are chosen properly. 
The reduction in the variance is handled via re-sampling each query $N$ times and averaging the values. Essentially, 
this leads to the same sampling model with i.i.d. $\noise_i \sim \calN(0,2\sigma^2/N), i\in[n]$.
\begin{theorem} \label{thm:main_univ_gauss_noise} 
For the Gaussian noise model with i.i.d. noise samples with variance $\sigma^2$ and $N\in\matN$, we resample each query 
$N$ times, and average the values. 
Consider Algorithm \ref{algo:univ_spam} with $\csalg$ instantiated with $(P1)$ in \eqref{eq:l1min_quadconstr}, wherein 
$\arbnoisebd = \sqrt{2}(1+\varepsilon)\sigma\sqrt{n/N}$ for some $\varepsilon \in (0,1)$, and with 
\begin{equation}\label{eq:univ_gauss_1}
N \geq \left\lfloor \frac{18C_2^2 (1+\varepsilon)^2 \sigma^2}{\idenconst_1^2} \right\rfloor + 1, \ m \geq \left(\frac{3L}{\idenconst_1}\right)^{1/\alpha}, \ \  
 n \geq \max\set{\tilde{c}_1  \abs{\univsupp} \log \left(\frac{d}{\abs{\univsupp}}\right), \frac{2\log(2m+1)}{c_3 \varepsilon^2}}
\end{equation}
being satisfied. 
Then $\est{\univsupp} = \univsupp$ with probability at least 
$1 - 2\exp(-\tilde{c}_2 n) - 2\exp(-\frac{c_3 \varepsilon^2 n}{2})$. The constants $\tilde{c}_1,\tilde{c}_2 > 0$ are as 
in Theorem \ref{thm:main_univ_arbnoise}; $C_2, c_3 > 0$ come from Theorem \ref{thm:arbnoise_lin_rec} and 
Corollary \ref{cor:gauss_noise_lin_rec}, respectively.
The total number of queries made is 
$2(2m+1) n N = \Omega\left(\jvyb{\bar{c}_1^{\prime}}\abs{\univsupp} \log \left(\frac{d}{\abs{\univsupp}}\right)\right)$ 
\jvyb{where $\bar{c}_1^{\prime} > 1$ depends on $L,\idenconst_1,\alpha,\sigma$}.
\end{theorem}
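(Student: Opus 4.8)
The plan is to mirror the bounded-noise argument of Theorem \ref{thm:main_univ_arbnoise}, but to control the effective noise level by resampling and then to invoke the Gaussian recovery guarantee of Corollary \ref{cor:gauss_noise_lin_rec} in place of Theorem \ref{thm:arbnoise_lin_rec}. First I would observe that averaging each query over $N$ independent repetitions replaces a single $\calN(0,\sigma^2)$ noise sample by one of variance $\sigma^2/N$. Since each row of the linear system \eqref{eq:lin_sys_univspam} is formed as $\ftil(\xposvec_i)-\ftil(\xnegvec_i)$, the entries of $\noisevec$ become i.i.d.\ $\calN(0,2\sigma^2/N)$; that is, the effective noise standard deviation is $\tilde\sigma=\sqrt{2}\,\sigma/\sqrt{N}$. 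With the choice $\arbnoisebd=\sqrt{2}(1+\varepsilon)\sigma\sqrt{n/N}=(1+\varepsilon)\tilde\sigma\sqrt{n}$, the program $(P1)$ is set up exactly as required by Corollary \ref{cor:gauss_noise_lin_rec}.

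Next I would establish the $\ell_2/\ell_2$ RIP of the (single, fixed) matrix $\matB$. Because $\matB$ is generated once and reused for every base point, this is a single event: by Theorem \ref{thm:arbnoise_lin_rec}(1), the condition $n\geq\tilde c_1\abs{\univsupp}\log(d/\abs{\univsupp})$ guarantees $\riptt_{2\abs{\univsupp}}<\sqrt{2}-1$ with probability at least $1-2\exp(-\tilde c_2 n)$. Conditioning on this event, I apply Corollary \ref{cor:gauss_noise_lin_rec} at each $\vecx\in\baseset$. Since $\origsparsvec(\vecx)$ is exactly $\abs{\univsupp}$-sparse, the best-$\abs{\univsupp}$-term residual vanishes, so the bound collapses to $\norm{\est{\origsparsvec}(\vecx)-\origsparsvec(\vecx)}_\infty\leq\norm{\est{\origsparsvec}(\vecx)-\origsparsvec(\vecx)}_2\leq C_2(1+\varepsilon)\tilde\sigma=\sqrt{2}\,C_2(1+\varepsilon)\sigma/\sqrt{N}$. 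Thus $\csalg$ is $\epsilon$-accurate at $\vecx$ with $\epsilon=\sqrt{2}\,C_2(1+\varepsilon)\sigma/\sqrt{N}$, each such event having probability at least $1-2\exp(-c_3\varepsilon^2 n)$.

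The main subtlety is that Lemma \ref{lemma:univ_spam_algo} requires $\epsilon$-accuracy \emph{simultaneously} at all $2m+1$ base points, while the noise at distinct base points is fresh and each accuracy event can fail. I would handle this by a union bound over $\baseset$: the total failure probability of the noise events is at most $(2m+1)\cdot 2\exp(-c_3\varepsilon^2 n)$. Here the hypothesis $n\geq 2\log(2m+1)/(c_3\varepsilon^2)$ is precisely what is needed, since it gives $2m+1\leq\exp(c_3\varepsilon^2 n/2)$ and hence $(2m+1)\cdot 2\exp(-c_3\varepsilon^2 n)\leq 2\exp(-c_3\varepsilon^2 n/2)$. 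Combining with the RIP event, $\csalg$ is $\epsilon$-accurate at every $\vecx\in\baseset$ with probability at least $1-2\exp(-\tilde c_2 n)-2\exp(-c_3\varepsilon^2 n/2)$.

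Finally I would check the threshold condition of Lemma \ref{lemma:univ_spam_algo}: the stated lower bound $N\geq\lfloor 18 C_2^2(1+\varepsilon)^2\sigma^2/\idenconst_1^2\rfloor+1$ is equivalent to $\sqrt{2}\,C_2(1+\varepsilon)\sigma/\sqrt{N}<\idenconst_1/3$, i.e.\ $\epsilon<\idenconst_1/3$, while $m\geq(3L/\idenconst_1)^{1/\alpha}$ is exactly the grid-resolution hypothesis of that lemma. Invoking Lemma \ref{lemma:univ_spam_algo} on the high-probability event then yields $\est{\univsupp}=\univsupp$. The query count follows by bookkeeping: each of the $2m+1$ base points uses $2n$ point queries, each repeated $N$ times, for a total of $2(2m+1)nN$.
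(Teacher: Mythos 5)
Your proposal is correct and follows essentially the same route as the paper's proof: reduce the effective noise to $\calN(0,2\sigma^2/N)$ via resampling, condition on the RIP event from Theorem \ref{thm:arbnoise_lin_rec}, apply Corollary \ref{cor:gauss_noise_lin_rec} pointwise with $\arbnoisebd=\sqrt{2}(1+\varepsilon)\sigma\sqrt{n/N}$, union bound over the $2m+1$ grid points using $n\geq 2\log(2m+1)/(c_3\varepsilon^2)$, and finish with Lemma \ref{lemma:univ_spam_algo} once the choice of $N$ forces $\epsilon<\idenconst_1/3$. The only (harmless) addition is your explicit remark that exact $\abs{\univsupp}$-sparsity kills the best-$k$-term residual, which the paper uses implicitly.
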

%
\begin{proof}
First note that in \eqref{eq:lin_sys_univspam}, as a consequence of resampling $N$ times and averaging, we have 
$\noise_i^{+}, \noise_i^{-} \sim \calN(0,\sigma^2/N)$, and $\noise_i = \noise_i^{+} - \noise_i^{-} \sim \calN(0,2\sigma^2/N), i \in[n]$.
Hence we set $\csalg = (P1)$ with $\arbnoisebd = \sqrt{2}(1+\varepsilon)\sigma\sqrt{n/N}$.

From Theorem \ref{thm:arbnoise_lin_rec}, there exist constants 
$\tilde{c}_1, \tilde{c}_2 > 0$ (depending only on ${c}_1, c_2 > 0$ defined therein) so that for the choice  
$n \geq \tilde{c}_1  \abs{\univsupp} \log (d/\abs{\univsupp})$, $\matB$ satisfies
$\ell_2/\ell_2$ RIP with $\riptt_{2\abs{\univsupp}} < \sqrt{2}-1$ with probability at least 
$1-2\exp(-\tilde{c}_2 n)$. Conditioning on this event, and invoking Corollary \ref{cor:gauss_noise_lin_rec} for the 
stated choice of $\arbnoisebd$, 
we have for any given $\vecx \in \baseset$ that with probability at least $1 - 2\exp(-c_3 \varepsilon^2 n)$, the following holds
\begin{equation} \label{eq:thm_gauss_uinv_spam_1}
 \norm{\est{\origsparsvec}(\vecx) - \origsparsvec(\vecx)}_{\infty} 
\leq \norm{\est{\origsparsvec}(\vecx) - \origsparsvec(\vecx)}_{2} 
\leq 	\sqrt{2}C_2 (1+\varepsilon)\sigma/\sqrt{N}.
\end{equation}
By the union bound over the $2m+1$ elements of $\baseset$, it follows that \eqref{eq:thm_gauss_uinv_spam_1} holds for all 
$\vecx \in \baseset$ with probability at least 
\begin{align*}
1 - 2\abs{\baseset}\exp(-c_3 \varepsilon^2 n) 
&= 1 - 2\exp(\log(2m+1)-c_3 \varepsilon^2 n) 
\geq 1 - 2\exp\Bigl(-\frac{c_3 \varepsilon^2 n}{2}\Bigr)
\end{align*}
if $n \geq \frac{2\log(2m+1)}{c_3 \varepsilon^2}$ holds.
By \eqref{eq:univ_gauss_1}, this condition is indeed satisfied and furthermore, we see
that $\csalg$ is $\epsilon = \sqrt{2}C_2 (1+\varepsilon)\sigma/\sqrt{N}$
accurate for all $\vecx \in \baseset$ with probability at least $1 - 2\exp(-\tilde{c}_2 n)- 2\exp(-\frac{c_3 \varepsilon^2 n}{2})$.
Now invoking Lemma \ref{lemma:univ_spam_algo}, and using \eqref{eq:thm_gauss_uinv_spam_1}, it follows that if 
\begin{equation} \label{eq:thm_gauss_uinv_spam_2}
\frac{\sqrt{2}C_2 (1+\varepsilon)\sigma}{\sqrt{N}} < \frac{\idenconst_1}{3}
\end{equation}
holds, then the stated choice of $\epsilon$ and $m$ ensures exact recovery. 
Finally, \eqref{eq:univ_gauss_1} ensures that \eqref{eq:thm_gauss_uinv_spam_2} holds and 
this completes the proof.
\end{proof}

\subsection{Bivariate case} \label{subsec:final_res_biv}
In the bivariate case from Section \ref{sec:spam_biv}, we use Algorithm \ref{algo:biv_spam} 
for recovering the supports $\bivsupp,\univsupp$ and with the Lemma \ref{lemma:biv_spam_algo}
giving sufficient conditions for their exact recovery.
Instantiating $\csalg_2$ with (P2) in \eqref{eq:sparserec_prog_ordtwo}, and $\csalg_1$ 
with (P1) in \eqref{eq:l1min_quadconstr} gives then the results below. 
%
\paragraph{Bounded noise model.} 
\jvyb{The} following theorem shows that if $\noisebd$ is sufficiently small in the bounded noise model described before,
then Algorithm \ref{algo:biv_spam} recovers $\bivsupp$ and $\univsupp$ exactly provided the 
parameters $m_i,n_i,\epsilon_i$, $i=1,2$ are chosen in a suitable way. 
\begin{theorem} \label{thm:main_biv_arbnoise}
For the bounded noise model with the noise uniformly bounded by $\noisebd$,
consider Algorithm \ref{algo:biv_spam} with $\csalg_2$ instantiated with 
$(P2)$ with $\arbnoisebd_2 = 4\noisebd n_2$ in \eqref{eq:sparserec_prog_ordtwo},
and $\csalg_1$ realized by $(P1)$ with $\arbnoisebd_1 = 2\noisebd \sqrt{n_1}$ in \eqref{eq:l1min_quadconstr}, respectively.
Let $\twohashfam$ be a $(d,2)$ hash family. 
If  
\begin{align*}
\noisebd &< \min\set{\frac{\idenconst_2}{12 C_4},\frac{\idenconst_1}{6 C_2}}, \ m_2 \geq \sqrt{2}\left(\frac{3L}{\idenconst_2}\right)^{1/\alpha}, \ 
n_2 \geq \tilde{c}_3 \abs{\bivsupp} \log(d^2/\abs{\bivsupp}),  \\
m_1 &\geq (3L/\idenconst_1)^{1/\alpha} \quad \text{and} \quad n_1 \geq \tilde{c}_1 
\abs{\univsupp} \log \biggl(\frac{d-\abs{\est{\bivsuppvar}}}{\abs{\univsupp}}\biggr)
\end{align*}
are satisfied, then 
$\est{\bivsupp} = \bivsupp$ and $\est{\univsupp} = \univsupp$ 
with probability at least $1 - \exp(-\tilde{c}_4 n_2) - 2\exp(-\tilde{c}_2 n_1)$. 
Here, the constants $C_4,\tilde{c}_3,\tilde{c}_4 > 0$ are 
from Theorem \ref{thm:arbnoise_bilin_rec}, while $\tilde{c}_1, \tilde{c}_2, C_2 > 0$ are as defined in Theorem \ref{thm:main_univ_arbnoise}. 
The total number of queries made is 
\begin{equation*}
4(2m_2 + 1)^2 n_2 \abs{\twohashfam} + 2(2m_1 + 1) n_1 
= \Omega\left(\jvyb{\bar{c}_2}\abs{\bivsupp} \log\left(\frac{d^2}{\abs{\bivsupp}}\right) \abs{\twohashfam} 
+ \jvyb{\bar{c}_1}\abs{\univsupp} \log \left(\frac{d-\abs{\est{\bivsuppvar}}}{\abs{\univsupp}}\right)\right).
\end{equation*}
\jvyb{Here $\bar{c}_i > 1$ depends on $L,\idenconst_i,\alpha$ with $\bar{c}_1$ as in Theorem \ref{thm:main_univ_arbnoise}.}
\end{theorem}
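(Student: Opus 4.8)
The plan is to follow the template set by the proof of Theorem \ref{thm:main_univ_arbnoise}: combine the deterministic recovery guarantee of Lemma \ref{lemma:biv_spam_algo} with the compressive sensing results of Theorems \ref{thm:arbnoise_bilin_rec} and \ref{thm:arbnoise_lin_rec}, instantiated at $\csalg_2 = (P2)$ and $\csalg_1 = (P1)$ respectively. The argument decouples cleanly into a bivariate stage (recovering $\bivsupp$) and a univariate stage (recovering $\univsupp$), because Lemma \ref{lemma:biv_spam_algo} already treats the second stage under the assumption $\est{\bivsupp} = \bivsupp$. For each stage the task is simply to exhibit a high-probability event on which $\csalg$ is $\epsilon$-accurate with $\epsilon$ below the threshold that the lemma demands.

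For the bivariate stage I would first bound the noise in \eqref{eq:lin_sys_bivspam}: each entry of $\noisevec$ has the form $\noise_{i,1} - \noise_{i,2} - \noise_{i,3} + \noise_{i,4}$, so $\abs{\noisevec_i} \leq 4\noisebd$ and hence $\norm{\noisevec}_1 \leq 4\noisebd n_2$, which justifies the choice $\arbnoisebd_2 = 4\noisebd n_2$. The crucial structural observation -- exactly as in the univariate case -- is that the \randvar vectors, and therefore the matrix $\matB$ of \eqref{eq:lin_sys_bivspam}, are generated once and reused at every $\hashfn \in \twohashfam$ and every $\vecx \in \baseset(\hashfn)$. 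Consequently I invoke the fourth part of Theorem \ref{thm:arbnoise_bilin_rec} with $n_2 \geq \tilde{c}_3 \abs{\bivsupp}\log(d^2/\abs{\bivsupp})$, whose conclusion holds \emph{simultaneously for all} $\veca$ on a single event of probability at least $1 - \exp(-\tilde{c}_4 n_2)$; this is precisely what lets a single RIP realization serve all base points. Since the target $\origsparsvec(\vecx;\calA)$ is exactly $\abs{\bivsupp}$-sparse, the best-$\totsparsity$-term term $\norm{\veca - \veca_{\totsparsity}}_1$ vanishes, leaving
$$
\norm{\est{\origsparsvec}(\vecx;\calA) - \origsparsvec(\vecx;\calA)}_{\infty} \leq \norm{\est{\origsparsvec}(\vecx;\calA) - \origsparsvec(\vecx;\calA)}_{2} \leq C_4 \frac{\arbnoisebd_2}{n_2} = 4C_4\noisebd
$$
at all $\vecx$ and $\hashfn$. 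Hence $\csalg_2$ is $\epsilon_2 = 4C_4\noisebd$-accurate, and the hypothesis $\noisebd < \idenconst_2/(12C_4)$ forces $\epsilon_2 < \idenconst_2/3$; together with $m_2 \geq \sqrt{2}(3L/\idenconst_2)^{1/\alpha}$ this meets the conditions of Lemma \ref{lemma:biv_spam_algo} and yields $\est{\bivsupp} = \bivsupp$.

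For the univariate stage I would condition on $\est{\bivsupp} = \bivsupp$, so that $\calP = [d]\setminus\bivsuppvar$ with $\abs{\calP} = d - \abs{\est{\bivsuppvar}}$ and the reduced problem is a univariate SPAM with support $\univsupp \subset \calP$. The analysis is then verbatim that of Theorem \ref{thm:main_univ_arbnoise} with $d$ replaced by $\abs{\calP}$: the noise obeys $\abs{\noise_i} \leq 2\noisebd$, giving $\norm{\noisevec}_2 \leq 2\noisebd\sqrt{n_1}$ and $\arbnoisebd_1 = 2\noisebd\sqrt{n_1}$; Theorem \ref{thm:arbnoise_lin_rec} furnishes $\ell_2/\ell_2$ RIP of order $2\abs{\univsupp}$ with probability at least $1 - 2\exp(-\tilde{c}_2 n_1)$ once $n_1 \geq \tilde{c}_1 \abs{\univsupp}\log(\abs{\calP}/\abs{\univsupp})$, whence $\csalg_1$ is $\epsilon_1 = 2C_2\noisebd$-accurate; and $\noisebd < \idenconst_1/(6C_2)$ with $m_1 \geq (3L/\idenconst_1)^{1/\alpha}$ lets Lemma \ref{lemma:biv_spam_algo} conclude $\est{\univsupp} = \univsupp$.

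Finally I would assemble the probabilities. Let $A$ be the event that the bivariate $\matB$ satisfies the required $\ell_2/\ell_1$ RIP and $B$ the event that the univariate $\matB$ satisfies the required $\ell_2/\ell_2$ RIP; these are built from independently drawn \randvar vectors. On $A \cap B$ both supports are recovered exactly, and a union bound on the two failure events gives the claimed probability $1 - \exp(-\tilde{c}_4 n_2) - 2\exp(-\tilde{c}_2 n_1)$, while the query count is copied directly from Lemma \ref{lemma:biv_spam_algo}. I expect no real difficulty in the calculations; the two points worth stating carefully are the reuse of a single RIP realization across all base points (which avoids any union bound over $\baseset(\hashfn)$ and $\twohashfam$, and is what keeps $n_2$ and $n_1$ free of spurious $\log\abs{\twohashfam}$ or $\log\abs{\baseset}$ factors) and the vanishing of the approximation-error term by exact sparsity of the target, which together are what make the error bounds clean enough to feed into Lemma \ref{lemma:biv_spam_algo}.
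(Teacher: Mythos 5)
Your proposal is correct and follows essentially the same route as the paper's proof: bound $\norm{\noisevec}_1 \leq 4\noisebd n_2$, invoke the uniform (simultaneous-for-all-$\veca$) guarantee of Theorem \ref{thm:arbnoise_bilin_rec} so that one RIP event covers every $\hashfn \in \twohashfam$ and $\vecx \in \baseset(\hashfn)$, use exact sparsity of $\origsparsvec(\vecx;\calA)$ to get $\epsilon_2 = 4C_4\noisebd$, feed this into Lemma \ref{lemma:biv_spam_algo}, then reduce to the univariate argument of Theorem \ref{thm:main_univ_arbnoise} on $\calP$ and combine the two failure probabilities. The paper phrases the final step as $\prob(\calA\cup\calB) \leq \prob(\calA) + \prob(\calB \mid \calA^c)$ rather than via independence, but this is the same bound and your conditioning on $\est{\bivsupp} = \bivsupp$ handles it equivalently.
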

\begin{proof}
We focus on the proof of the exact recovery of $\bivsupp$. Once $\bivsupp$ is recovered exactly, the model 
reduces to a univariate SPAM on the variable set $\calP = [d] \setminus \bivsuppvar$, with $\univsupp \subset \calP$. 
Thereafter, the proof of the exact recovery of $\univsupp$ is identical to the proof of Theorem \ref{thm:main_univ_arbnoise}. 

Since $\abs{\noise_{i,1}}, \abs{\noise_{i,2}},\abs{\noise_{i,3}},\abs{\noise_{i,4}} \leq \noisebd$ in \eqref{eq:lin_sys_bivspam} for $i\in[n_2]$, 
we obtain $\|\noisevec\|_1 \leq n_2 \|\noisevec\|_{\infty} \leq 4 n_2 \noisebd$. Therefore 
we set $\csalg_2 = (P2)$ with $\arbnoisebd = 4\noisebd n_2$.

Now, as a consequence of Theorem \ref{thm:arbnoise_bilin_rec}, there exist constants  
$\tilde c_3, \tilde c_4, C_4 > 0$ such that if $n_2 \ge \tilde c_3 \abs{\bivsupp} \log(d^2/\abs{\bivsupp})$, 
then with probability at least $1-\exp(-\tilde c_4 n_2)$, 
\begin{equation*}
\norm{\est{\origsparsvec}(\vecx;\calA) - \origsparsvec(\vecx;\calA)}_{\infty} \ \leq \ 
\norm{\est{\origsparsvec}(\vecx;\calA) - \origsparsvec(\vecx;\calA)}_{2}\ \leq\, 4C_4\noisebd \quad \text{for all}\ \vecx \in \bigcup_{\hashfn \in \twohashfam} \baseset(\hashfn). 
\end{equation*}
This holds since $\origsparsvec(\vecx;\calA)$ is always at most $\abs{\bivsupp}$ sparse. 
Thus, with probability at least $1-\exp(-\tilde c_4 n_2)$, $\csalg_2$ is $\epsilon_2 = 4 C_4 \noisebd$ 
accurate for each $\hashfn \in \twohashfam$, $\vecx \in \baseset(\hashfn)$. 
Now invoking Lemma \ref{lemma:biv_spam_algo} reveals that if 
\begin{equation*}
4 C_4 \noisebd < \frac{\idenconst_2}{3} \Leftrightarrow \noisebd < \frac{\idenconst_2}{12 C_4}
\end{equation*}
holds, then the stated choice of $\epsilon_2$ and $m_2$ ensures $\est{\bivsupp} = \bivsupp$. 

\jvyb{In order to derive the lower bound on the success probability of identifying $\univsupp,\bivsupp$, we will 
make use of the following simple union bound inequality. For events $\calA,\calB$, it holds that 
\begin{equation} \label{eq:useful_union_bd_biv}
\prob(\calA \cup\calB) 
= \prob(\calA \cup \set{\calA^{c} \cap \calB}) 
= \prob(\calA) + \prob(\calA^{c} \cap \calB) 
\leq \prob(\calA) + \prob(\calB \mid \calA^{c}).
\end{equation}
Hence with $\calA = \set{\est \bivsupp \neq \bivsupp}$, $\calB = \set{\est \univsupp \neq \univsupp}$,
we readily arrive at the lower bound on the success probability.}

Lastly, the bound on the total number of queries follows from Lemma \ref{lemma:biv_spam_algo} 
by plugging in the stated bounds on $m_i,n_i$; $i=1,2$.
\end{proof}

\paragraph{Gaussian noise model.} Next, we consider the Gaussian noise model with the noise samples 
i.i.d. Gaussian ($\sim \calN(0,\sigma^2)$) across queries. Similarly to Theorem \ref{thm:main_univ_gauss_noise}, we show 
that if the noise variance $\sigma^2$ is sufficiently small, then 
Algorithm \ref{algo:biv_spam} recovers $\bivsupp$ and  $\univsupp$ for a careful choice of $m_i,n_i$, $i=1,2$.
We again reduce the variance via re-sampling each query either $N_2$ times (during the estimation of $\bivsupp$) 
or $N_1$ times (during the estimation of $\univsupp$), and averaging the values.
\begin{theorem} \label{thm:main_biv_gauss_noise}
For the Gaussian noise model with i.i.d noise samples $\sim \calN(0,\sigma^2)$, say we resample each query 
$N_2$ times (during estimation of $\bivsupp$) or $N_1$ times (during estimation of $\univsupp$), and average the values. 
Consider Algorithm \ref{algo:biv_spam}, where $\csalg_2$ and $\csalg_1$ are instantiated with 
$(P2)$ with $\arbnoisebd_2=2\sigma(1+\varepsilon) n_2/\sqrt{N_2}$ in \eqref{eq:sparserec_prog_ordtwo}, 
and $(P1)$ with $\arbnoisebd_1=\sqrt{2}(1+\varepsilon)\sigma \sqrt{n_1/N_1}$ in \eqref{eq:l1min_quadconstr}, respectively,
for some $\varepsilon \in (0,1)$. Let $\twohashfam$ be a $(d,2)$ hash family. If 
\begin{align*}
N_2 &\geq \left\lfloor \frac{72C_4^2 (1+\varepsilon)^2 \sigma^2}{\pi \idenconst_2^2} \right\rfloor + 1, \ m_2 \geq \sqrt{2}\left(\frac{3L}{\idenconst_2}\right)^{1/\alpha}, \ n_2 \geq \max\set{\tilde{c}_3 \abs{\bivsupp} \log\left(\frac{d^2}{\abs{\bivsupp}}\right), \frac{2\log[(2m_2+1)^2 e \abs{\twohashfam}]}{\tilde c_5 \varepsilon^2}} \nonumber \\ 
N_1 &\geq \left\lfloor \frac{18C_2^2 (1+\varepsilon)^2 \sigma^2}{\idenconst_1^2} \right\rfloor + 1, \ m_1 \geq \left(\frac{3L}{\idenconst_1}\right)^{1/\alpha}, \ \  
n_1 \geq \max\set{\tilde{c}_1  \abs{\univsupp} \log \left(\frac{d-\abs{\est{\bivsuppvar}}}{\abs{\univsupp}}\right), \frac{2\log(2m_1+1)}{c_3 \varepsilon^2}}
\end{align*}
hold, 
then $\est{\bivsupp} = \bivsupp$ and $\est{\univsupp} = \univsupp$  with probability at least 
\begin{equation*}
1 - \exp(-\tilde{c}_4 n_2) - \exp\left(-\frac{\tilde c_5 n_2 \varepsilon^2}{2}\right) - 2\exp(-\tilde{c}_2 n_1) - 2\exp\left(-\frac{c_3 \varepsilon^2 n_1}{2}\right). 
\end{equation*}
The total number of queries made is 
\begin{equation*}
4(2m_2 + 1)^2 n_2 N_2 \abs{\twohashfam} + 2(2m_1 + 1) n_1 N_1 
= \Omega\left(\jvyb{\bar{c}_2'}\abs{\bivsupp} \log\left(\frac{d^2}{\abs{\bivsupp}}\right) \abs{\twohashfam} 
+ \jvyb{\bar{c}_1'}\abs{\univsupp} \log \left(\frac{d-\abs{\est{\bivsuppvar}}}{\abs{\univsupp}}\right)\right).
\end{equation*}
\jvyb{Here $\bar{c}_i' > 1$ depends on $L,\idenconst_i,\alpha,\sigma$ with $\bar{c}_1'$ as in 
Theorem \ref{thm:main_univ_gauss_noise}.}
\end{theorem}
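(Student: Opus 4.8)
The plan is to follow the template of the bounded-noise bivariate result (Theorem \ref{thm:main_biv_arbnoise}) and the univariate Gaussian result (Theorem \ref{thm:main_univ_gauss_noise}), combining the deterministic exact-recovery conditions of Lemma \ref{lemma:biv_spam_algo} with the Gaussian sparse-recovery guarantees of Corollaries \ref{cor:gauss_noise_bilin_rec} and \ref{cor:gauss_noise_lin_rec}. As in the bounded-noise case, I would first establish exact recovery of $\bivsupp$; once $\est{\bivsupp} = \bivsupp$, the problem collapses to a univariate SPAM on $\calP = [d] \setminus \bivsuppvar$, and the $\univsupp$ stage is handled exactly as in Theorem \ref{thm:main_univ_gauss_noise}.

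For the $\bivsupp$ stage the key is to track the \emph{effective} noise produced by resampling and averaging. Averaging each of the four queries over $N_2$ repetitions turns each $\noise_{i,p}$ into a $\calN(0,\sigma^2/N_2)$ variable; since the measurement in \eqref{eq:lin_sys_bivspam} is the signed sum $\noise_{i,1} - \noise_{i,2} - \noise_{i,3} + \noise_{i,4}$ of four independent such variables, the effective per-measurement noise is $\calN(0, 4\sigma^2/N_2)$, of standard deviation $2\sigma/\sqrt{N_2}$. This is precisely why $\arbnoisebd_2$ is set to $(1+\varepsilon)(2\sigma/\sqrt{N_2})\,n_2 = 2\sigma(1+\varepsilon)n_2/\sqrt{N_2}$. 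Invoking Corollary \ref{cor:gauss_noise_bilin_rec} with this effective variance, and using that $\origsparsvec(\vecx;\calA)$ is exactly $\abs{\bivsupp}$-sparse so the best-$\totsparsity$-term term vanishes, gives for each fixed base point $\norm{\est{\origsparsvec}(\vecx;\calA) - \origsparsvec(\vecx;\calA)}_{\infty} \le 2\sqrt{2/\pi}\,C_4(1+\varepsilon)\sigma/\sqrt{N_2}$ with probability at least $1 - e\exp(-\tilde c_5\varepsilon^2 n_2)$, conditioned on the $\ell_2/\ell_1$ RIP event (probability $1 - \exp(-\tilde c_4 n_2)$) that holds once $n_2 \ge \tilde c_3\abs{\bivsupp}\log(d^2/\abs{\bivsupp})$.

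Next I would make this $\epsilon_2$-accuracy uniform over all $(2m_2+1)^2\abs{\twohashfam}$ base points by a union bound; the stated bound $n_2 \ge 2\log[(2m_2+1)^2 e\abs{\twohashfam}]/(\tilde c_5\varepsilon^2)$ is exactly what absorbs the resulting $(2m_2+1)^2 e\abs{\twohashfam}$ factor and drives the aggregate failure probability down to $\exp(-\tilde c_5 n_2\varepsilon^2/2)$. The choice of $N_2$ then forces $\epsilon_2 = 2\sqrt{2/\pi}\,C_4(1+\varepsilon)\sigma/\sqrt{N_2} < \idenconst_2/3$: squaring this inequality yields the threshold $N_2 > 72 C_4^2(1+\varepsilon)^2\sigma^2/(\pi\idenconst_2^2)$, matching the hypothesis. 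With $\epsilon_2 < \idenconst_2/3$ and $m_2 \ge \sqrt 2(3L/\idenconst_2)^{1/\alpha}$, Lemma \ref{lemma:biv_spam_algo} delivers $\est{\bivsupp} = \bivsupp$.

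For the final assembly I would condition on $\est{\bivsupp} = \bivsupp$ and replay the univariate Gaussian argument of Theorem \ref{thm:main_univ_gauss_noise} verbatim on $\calP$ (effective noise $\calN(0,2\sigma^2/N_1)$, $\arbnoisebd_1 = \sqrt 2(1+\varepsilon)\sigma\sqrt{n_1/N_1}$), contributing failure probability $2\exp(-\tilde c_2 n_1) + 2\exp(-c_3\varepsilon^2 n_1/2)$. The two stages are then glued by the elementary inequality \eqref{eq:useful_union_bd_biv} with $\calA = \set{\est{\bivsupp} \neq \bivsupp}$ and $\calB = \set{\est{\univsupp} \neq \univsupp}$, which reproduces the stated overall success probability, while the query count is read off from Lemma \ref{lemma:biv_spam_algo} after scaling each stage by its resampling factor $N_2$, respectively $N_1$. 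I expect the only delicate point to be the bookkeeping of the effective noise variance through both the averaging and the signed four-term combination, since any slip there propagates directly into the threshold on $N_2$; the union bound over base points and the final event-combining step are routine once the single-point accuracy bound is secured.
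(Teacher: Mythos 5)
Your proposal is correct and follows essentially the same route as the paper's own proof: the same effective-noise bookkeeping ($\calN(0,4\sigma^2/N_2)$ per measurement, giving $\epsilon_2 = 2\sqrt{2/\pi}\,C_4(1+\varepsilon)\sigma/\sqrt{N_2}$), the same union bound over the $(2m_2+1)^2\abs{\twohashfam}$ base points absorbed by the stated lower bound on $n_2$, the same derivation of the $N_2$ threshold from $\epsilon_2 < \idenconst_2/3$, and the same final gluing of the two stages via \eqref{eq:useful_union_bd_biv}. No gaps.
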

%
\begin{proof}
As in the bounded noise model, we only prove the exact recovery of $\bivsupp$. First, we
note that in \eqref{eq:lin_sys_bivspam}, as a consequence of resampling each query $N_2$ times and averaging,
$\noise_{i,1}, \noise_{i,2}, \noise_{i,3}, \noise_{i,4} \sim \calN(0,\frac{\sigma^2}{N_2})$, $i\in[n_2]$ are independent. 
Therefore $\noise_i \sim \calN(0,\frac{4\sigma^2}{N_2}), i\in[n_2]$ are also independent and we set $\csalg_2 = (P2)$ 
with $\arbnoisebd_2 = 2\sigma(1+\varepsilon)n_2/\sqrt{N_2}$.

From Corollary \ref{cor:gauss_noise_bilin_rec}, we know that there exist constants $\tilde c_3, \tilde c_4, \tilde c_5, C_4 > 0$ 
such that if $n_2 \geq  \tilde{c}_3 \abs{\bivsupp} \log\bigl(d^2/\abs{\bivsupp}\bigr)$, then 
for any given $\hashfn \in \twohashfam$, $\vecx \in \baseset(\hashfn)$, we have for the stated choice of $\arbnoisebd_2$ that 
\begin{equation} \label{eq:mainthm_bivgauss_temp1}
\norm{\est{\origsparsvec}(\vecx;\calA) - \origsparsvec(\vecx;\calA)}_{\infty} \ \leq \ 
\norm{\est{\origsparsvec}(\vecx;\calA) - \origsparsvec(\vecx;\calA)}_{1}\  \leq\, C_4 \sqrt{\frac{2}{\pi}}(1+\varepsilon)\cdot \frac{2\sigma}{\sqrt{N_2}}=:\epsilon_2, 
\end{equation}
with probability at least $1 - \exp(- \tilde c_4 n_2) - e \cdot \exp\left(-\tilde c_5 \varepsilon^2 n_2\right)$. 
This is true since $\origsparsvec(\vecx;\calA)$ is always at most $\abs{\bivsupp}$ sparse. 
Therefore, by taking the union bound, \eqref{eq:mainthm_bivgauss_temp1} holds uniformly for all 
$\hashfn \in \twohashfam$, $\vecx \in \baseset(\hashfn)$, with probability at least 
\begin{align}
\notag&1 - \exp(- \tilde c_4 n_2) - (2m_2 + 1)^2\abs{\twohashfam} e \cdot \exp\left(-\tilde c_5 \varepsilon^2 n_2\right) \\ 
\notag= &1 - \exp(- \tilde c_4 n_2) -  \exp\left(\log[(2m_2 + 1)^2\abs{\twohashfam} e] - \tilde c_5 \varepsilon^2 n_2\right) \\ 
\label{eq:prob_1}\geq &1 - \exp(- \tilde c_4 n_2) - \exp\left(-\frac{\tilde c_5 n_2 \varepsilon^2}{2}\right)
\end{align}
if $n_2 \geq \frac{2\log[(2m_2+1)^2 e \abs{\twohashfam}]}{\tilde c_5 \varepsilon^2}$ holds.
We conclude that $\csalg_2$ is $\epsilon_2$-accurate for each $\hashfn \in \twohashfam$, $\vecx \in \baseset(\hashfn)$
with probability at least \eqref{eq:prob_1}.
 
By the stated choice of $N_2$, we have $\epsilon_2< \frac{\idenconst_2}{3}$
and using Lemma \ref{lemma:biv_spam_algo} and the condition on $m_2$, 
we obtain $\est{\bivsupp} = \bivsupp$. 
\jvyb{The lower bound on the success probability of identifying $\univsupp,\bivsupp$ follows via 
the same argument as in the proof of Theorem \ref{thm:main_biv_arbnoise}.}
Lastly, the bound on the total number of queries 
follows from the expression in Lemma \ref{lemma:biv_spam_algo} (taking the resampling into account) 
by plugging in the stated bounds on $m_i,n_i, N_i$; $i=1,2$. 
\end{proof}

\begin{remark} \label{rem:samp_compl_fin_biv}
\jvyb{As discussed in Section \ref{sec:spam_biv}, we can construct $\twohashfam$ via a simple 
randomized method (in time linear in output size) (eg., \cite[Section 5]{Devore2011}) where 
$\abs{\twohashfam} = O(\log d)$, with probability at least $1-d^{-\Omega(1)}$. Plugging this into 
Theorem \ref{thm:main_biv_arbnoise} leads to a (worst case) sample complexity of}
\begin{equation*}
\jvyb{\Omega\left(\bar{c}_2\abs{\bivsupp} \log(d^2/\abs{\bivsupp}) \log d + \bar{c}_1\abs{\univsupp} \log (d/\abs{\univsupp})\right)}
\end{equation*}
\jvyb{in the setting of arbitrary bounded noise.} 
\end{remark} 
\subsection{Multivariate case} \label{subsec:final_res_multiv}
Finally, we consider the most general multivariate setting of Section \ref{sec:spam_multiv}.
Based on Lemma \ref{lemma:multiv_spam_algo}, we analyze Algorithm \ref{algo:multiv_spam}
recovering the supports $\totsupp_i$; $i \in [\order]$.
The recovery routines $\csalg_i$ are realized by (P1) in \eqref{eq:l1min_quadconstr} for $i=1$ and for each $3 \leq i \leq \order$
and $\csalg_2$ is instantiated with (P2) in \eqref{eq:sparserec_prog_ordtwo}.
This leads to the results below. 
%
%
\paragraph{Bounded noise model.} This is the same noise model as described in the preceding subsections. The following theorem shows that if 
$\noisebd$ is sufficiently small, then Algorithm \ref{algo:multiv_spam} recovers $\totsupp_i$ exactly for all $i \in [\order]$ provided the 
parameters $(m_i,n_i)_{i=1}^{\order}$ are well chosen.
\begin{theorem} \label{thm:main_multiv_arbnoise}
For the bounded noise model with noise uniformly bounded by $\noisebd$, 
consider Algorithm \ref{algo:multiv_spam} with 
\begin{itemize}
\item[(a)] $\csalg_i$ instantiated with $(P1)$ with $\arbnoisebd_i = 2^i \noisebd \sqrt{n_i}$ in \eqref{eq:l1min_quadconstr} for $i \in \set{3,\dots,\order}$,  
\item[(b)] $\csalg_2$ instantiated with $(P2)$ with $\arbnoisebd_2 = 4\noisebd n_2$ in \eqref{eq:sparserec_prog_ordtwo}, 
\item[(c)] $\csalg_1$ instantiated with $(P1)$ with $\arbnoisebd_1 = 2\noisebd \sqrt{n_1}$ in \eqref{eq:l1min_quadconstr}, respectively. 
\end{itemize}
Let $\hashfam_i^{\calP_i}$ be a $(\calP_i,i)$ hash family for each $2\le i \le \order$
and let $\calP_i$ denote the set $\calP \subseteq [d]$ at the beginning of $i^{\rm th}$ iteration (with $\calP_{\order} = [d]$). If 
%
\begin{align*}
\noisebd &< \min\Bigl\{\min_{i \in \{3,\dots,\order\}}\frac{\idenconst_i}{2^i 3 C_2}, \frac{\idenconst_2}{12 C_4},\frac{\idenconst_1}{6 C_2}\Bigr\}, \quad
m_i \geq \biggl(\frac{3L (\sqrt{i})^{\alpha}}{\idenconst_i}\biggr)^{1/\alpha}; \quad i \in [\order], \\ 
n_i &\geq \tilde{c}_6 \abs{\totsupp_i}^2 \log {\abs{\calP_i} \choose i}; \quad i \in \set{3,\dots,\order}, \\
n_2 &\geq \tilde{c}_3 \abs{\bivsupp} \log(\abs{\calP_2}^2/\abs{\bivsupp}) \quad \text{and} \quad n_1 \geq \tilde{c}_1 \abs{\univsupp} \log (\abs{\calP_1}/\abs{\univsupp})
\end{align*}
are satisfied, then 
$\est{\totsupp_i} = \totsupp_i$ for all $i \in [\order]$ with probability at least 
%
\begin{equation*}
1 - \sum_{i = 3}^{\order} \exp\left(-\frac{\tilde{c}_7 n_i}{\abs{\totsupp_i}^2}\right) - \exp(-\tilde{c}_4 n_2) - 2\exp(-\tilde{c}_2 n_1). 
\end{equation*}
Here, the constants $C_4,\tilde{c}_3,\tilde{c}_4 > 0$ are 
from Theorem \ref{thm:arbnoise_bilin_rec}, while $\tilde{c}_1, \tilde{c}_2, C_2 > 0$ are as defined in Theorem \ref{thm:main_univ_arbnoise}. 
The constants $\tilde{c}_6, \tilde{c}_7 > 0$ depend only on the constants $c_6,c_7 > 0$ defined in Theorem \ref{thm:arbnoise_multlin_rec}. 
The total number of queries made is 
\begin{align*}
&\jvyb{\sum_{i=1}^{\order} 2^i (2m_i + 1)^i n_i \abs{\hashfam_i^{\calP_i}}} \\
&= \Omega\left(\sum_{i=3}^{\order}\Bigl[c_i^i \jvyb{i} \abs{\totsupp_i}^2 \log(\abs{\calP_i}) \abs{\hashfam_i^{\calP_i}}\Bigr] 
+ \jvyb{\bar{c}_2}\abs{\bivsupp} \log\left(\frac{\abs{\calP_2}^2}{\abs{\bivsupp}}\right) \abs{\twohashfam} 
+ \jvyb{\bar{c}_1}\abs{\univsupp} \log \left(\frac{\abs{\calP_1}}{\abs{\univsupp}}\right)\right)
\end{align*}
where \jvyb{for $i=3,\dots,\order$,} each $c_i > 1$ depends on $\idenconst_i$,$L,\alpha,\jvyb{i}$, \jvyb{and $\bar{c}_1,\bar{c}_2 > 1$ 
are as in Theorem \ref{thm:main_biv_arbnoise}}.
\end{theorem}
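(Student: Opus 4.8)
The plan is to replay the proof of Theorem~\ref{thm:main_biv_arbnoise} level by level over the interaction orders $i \in [\order]$, reducing everything to verifying the hypotheses of Lemma~\ref{lemma:multiv_spam_algo}; namely, that each $\csalg_i$ is $\epsilon_i$-accurate with $\epsilon_i < \idenconst_i/3$ and that $m_i \geq \sqrt{i}(3L/\idenconst_i)^{1/\alpha}$. First I would bound the noise entering the linear system \eqref{eq:lin_sys_multiv_spam} at each level. Since every entry of $\noisevec$ has the form $\sum_{z=1}^{2^i}(-1)^{\digit(z-1)}\noise_{u,z}$ with $\abs{\noise_{u,z}} \leq \noisebd$, it follows that $\abs{\noise_u} \leq 2^i\noisebd$, whence $\norm{\noisevec}_2 \leq 2^i\noisebd\sqrt{n_i}$ for the $\ell_2$-constrained programs and $\norm{\noisevec}_1 \leq 4 n_2\noisebd$ at level $2$. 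This justifies the stated choices $\arbnoisebd_i = 2^i\noisebd\sqrt{n_i}$, $\arbnoisebd_2 = 4\noisebd n_2$ and $\arbnoisebd_1 = 2\noisebd\sqrt{n_1}$.

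Next, for each level I would establish RIP of the measurement matrix $\matB$ and convert it into a uniform $\ell_\infty$ accuracy bound over the grid. For $i \in \set{3,\dots,\order}$ the matrix is a sparse multilinear design, so I invoke Theorem~\ref{thm:arbnoise_multlin_rec}(a): for $n_i \geq \tilde{c}_6\abs{\totsupp_i}^2\log\binom{\abs{\calP_i}}{i}$ it satisfies $\ell_2/\ell_2$ RIP of order $\abs{\totsupp_i}$ with $\riptt_{2\abs{\totsupp_i}} < \sqrt{2}-1$, with probability at least $1-\exp(-\tilde{c}_7 n_i/\abs{\totsupp_i}^2)$. Conditioned on this, Theorem~\ref{thm:arbnoise_lin_rec}(2) gives, since $\origsparsvec(\vecx;\calA)$ is exactly $\abs{\totsupp_i\cap\calA}$-sparse so the best-$\abs{\totsupp_i}$-term error vanishes, the bound $\norm{\est{\origsparsvec}(\vecx;\calA)-\origsparsvec(\vecx;\calA)}_\infty \leq C_2\arbnoisebd_i/\sqrt{n_i} = 2^i C_2\noisebd =: \epsilon_i$. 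For $i=2$ I would instead use Theorem~\ref{thm:arbnoise_bilin_rec} to obtain $\ell_2/\ell_1$ RIP and $\epsilon_2 = 4C_4\noisebd$, and for $i=1$ Theorem~\ref{thm:arbnoise_lin_rec} yields $\epsilon_1 = 2C_2\noisebd$, exactly as in Theorems~\ref{thm:main_univ_arbnoise} and \ref{thm:main_biv_arbnoise}.

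With the accuracy parameters in hand, I would then verify the hypotheses of Lemma~\ref{lemma:multiv_spam_algo}. The requirement $\epsilon_i < \idenconst_i/3$ translates into $2^i C_2\noisebd < \idenconst_i/3$ for $i\geq 3$, $4C_4\noisebd < \idenconst_2/3$, and $2C_2\noisebd < \idenconst_1/3$, which together are precisely the stated minimum over $i$ for the upper bound on $\noisebd$; the grid condition $m_i \geq \sqrt{i}(3L/\idenconst_i)^{1/\alpha}$ is just the rewriting of $m_i \geq (3L(\sqrt{i})^\alpha/\idenconst_i)^{1/\alpha}$. The overall failure probability I would assemble via the conditional union bound \eqref{eq:useful_union_bd_biv}: because recovery at level $i$ presupposes correct recovery at all higher levels (so that the residual model is a genuine order-$i$ SPAM on $\calP_i$), I bound $\prob(\exists i:\est{\totsupp_i}\neq\totsupp_i)$ by the sum of the level-wise conditional RIP-failure probabilities $\exp(-\tilde{c}_7 n_i/\abs{\totsupp_i}^2)$ for $i\geq 3$, $\exp(-\tilde{c}_4 n_2)$, and $2\exp(-\tilde{c}_2 n_1)$, yielding the claimed success probability. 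The query count then follows by substituting the chosen $m_i,n_i$ into the expression $\sum_{i=1}^{\order}2^i(2m_i+1)^i n_i\abs{\hashfam_i^{\calP_i}}$ from Lemma~\ref{lemma:multiv_spam_algo}, absorbing $2^i(2m_i+1)^i = \Theta(c_i^i)$ and using $\log\binom{\abs{\calP_i}}{i}=\Theta(i\log\abs{\calP_i})$ to expose the factor $i$.

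The hard part will be the bookkeeping of coordinating three distinct compressed-sensing guarantees at the right levels; in particular, recognizing that for $i\geq 3$ the only available RIP bound (Theorem~\ref{thm:arbnoise_multlin_rec}) forces the \emph{quadratic} sparsity dependence $n_i \gtrsim \abs{\totsupp_i}^2\log(\cdot)$, in contrast to the \emph{linear} dependence at level $2$, and that the noise-amplification factor $2^i$ must be tracked through the $\ell_\infty$ bound to obtain the correct $i$-dependent threshold on $\noisebd$. Once these are handled, the remainder is a mechanical replay of the univariate and bivariate arguments.
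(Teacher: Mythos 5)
Your proposal is correct and follows essentially the same route as the paper's proof: bound the per-level noise by $2^i\noisebd$, use Theorem~\ref{thm:arbnoise_multlin_rec}(a) to get $\ell_2/\ell_2$ RIP for $i\geq 3$ (hence the quadratic sparsity dependence), convert to $\ell_\infty$ accuracy $\epsilon_i = 2^iC_2\noisebd$ via the Cand\`es-type bound, hand levels $2$ and $1$ to the bivariate/univariate machinery, check the hypotheses of Lemma~\ref{lemma:multiv_spam_algo}, and assemble the failure probability with the conditional union bound. The only cosmetic difference is that the paper invokes Theorem~\ref{thm:main_biv_arbnoise} wholesale for levels $2$ and $1$ rather than re-deriving $\epsilon_2,\epsilon_1$, and your attribution of the post-RIP error bound to Theorem~\ref{thm:arbnoise_lin_rec}(2) is in fact slightly more precise than the paper's own citation.
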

%
%
\begin{proof}
Say we are at the beginning of $i^{\rm th}$ iteration with $3 \leq i \leq \order$ and $\est{\totsupp}_l = \totsupp_l$ holds true for each $l > i$.
Hence, the model has reduced to an order $i$ sparse additive model on the set $\calP_i \subseteq [d]$, with 
$\totsupp_i^{(1)},\totsupp_{i-1}^{(1)},\dots,\totsupp_1 \subset \calP_i$.

From \eqref{eq:lin_sys_multiv_spam}, we see for the noise vector $\noisevec \in \matR^{n_i}$ that 
\begin{equation}\label{eq:multi_noise1}
\noise_s = \sum_{z=1}^{2^i} (-1)^{\digit(z-1)} \noise_{s,z} \quad\text{for all}\ s\in[n_i].
\end{equation}
Since $\abs{\noise_{s,z}} \leq  \noisebd$, this implies $\|\noisevec\|_{\infty} \leq 2^i \noisebd$ and thus 
$\|\noisevec\|_2 \leq 2^i\noisebd\sqrt{n_i}$. This bound holds uniformly for each $\vecx$ at which the linear system 
is formed. So we now instantiate $\csalg_i$ with $(P1)$ in \eqref{eq:l1min_quadconstr}, with $\arbnoisebd_i = 2^i\noisebd\sqrt{n_i}$. 

As a consequence of part $1$ of Theorem \ref{thm:arbnoise_multlin_rec}, there exists constants $\tilde{c}_6,\tilde{c}_7 > 0$ 
depending on $c_6,c_7 > 0$ (as defined in Theorem \ref{thm:arbnoise_multlin_rec}) so that if 
$n_i \geq \tilde{c}_6 \abs{\totsupp_i}^2 \log {\abs{\calP_i} \choose i}$, then with probability at least 
$1-\exp\left(-\frac{\tilde{c}_7 n_i}{\abs{\totsupp_i}^2}\right)$, the 
matrix $\matB \in \matR^{n_i \times {\abs{\calP_i} \choose i}}$ satisfies $\ell_2/\ell_2$ RIP with 
$\riptt_{2\abs{\totsupp_i}} < \sqrt{2}-1$. Conditioning on this event, it follows from Theorem \ref{thm:arbnoise_multlin_rec} that 
%
\begin{equation*}
\norm{\est{\origsparsvec}(\vecx;\calA) - \underbrace{\origsparsvec(\vecx;\calA)}_{\abs{\totsupp_i} \ \text{sparse}}}_{\infty} \leq\ 
\norm{\est{\origsparsvec}(\vecx;\calA) - \origsparsvec(\vecx;\calA)}_{2}\  \leq 2^i\noisebd C_2=:\epsilon_i\ \text{for all}\
\vecx \in\bigcup_{\hashfn \in \hashfam_{i}^{\calP_i}} \baseset(\hashfn). 
\end{equation*}
%
Thus, with probability at least $1-\exp(-\frac{\tilde{c}_7 n_i}{\abs{\totsupp_i}^2})$, $\csalg_i$ is $\epsilon_i$-accurate 
for each $\hashfn \in \hashfam_{i}^{\calP_i}$, $\vecx \in \baseset(\hashfn)$.
The assumption on $\noisebd$ ensures that $\epsilon_i<D_i/3$
and by Lemma \ref{lemma:multiv_spam_algo} it follows that 
the stated choice of $(m_i,\epsilon_i)$ ensures exact recovery of $\totsupp_i$. 

Hence if 
$$
\noisebd < \min_{i \in \set{3,\dots,\order}}\frac{\idenconst_i}{2^i 3 C_2}
$$ 
is satisfied and $m_i,n_i,\epsilon_i$ satisfy their 
stated bounds (for $3 \leq i \leq \order$), 
\jvyb{then we can lower bound the probability that $\est{\totsupp_i} = \totsupp_i$ 
holds for all $i=3,\dots,\order$ via the following simple generalization of \eqref{eq:useful_union_bd_biv}. 
For events $\calA_3,\dots,\calA_{\order}$ it holds that
\begin{equation} \label{eq:useful_union_bd_multiv}
 \prob(\cup_{i=3}^{\order} \calA_i) 
\leq \prob(\calA_{\order}) + \prob(\calA_{\order-1} \mid \calA_{\order}^c) 
+ \prob(\calA_{\order-2} \mid \calA_{\order}^c \cap \calA^{c}_{\order-1}) 
+ \cdots + \prob(\calA_{3} \mid \cap_{i=4}^{\order} \calA_{i}^c).
\end{equation}
Therefore plugging $\calA_i = \set{\est{\totsupp_i} \neq \totsupp_i}$ in \eqref{eq:useful_union_bd_multiv}, 
we readily have that}
$\est{\totsupp_i} = \totsupp_i$ holds for all $3 \leq i \leq \order$ with probability at least 
\begin{equation*}
1 - \sum_{i = 3}^{\order} \exp\left(-\frac{\tilde{c}_7 n_i}{\abs{\totsupp_i}^2}\right).
\end{equation*}
Once $\totsupp_i$ are identified exactly for all $3 \leq i \leq \order$, 
we are left with a bivariate SPAM on the set $\calP_2$, with $\univsupp, \totsupp_2^{(1)} \subset \calP_2$.
Therefore by invoking Theorem \ref{thm:main_biv_arbnoise}, we see that if furthermore
$\noisebd < \min\set{\frac{\idenconst_2}{12 C_4},\frac{\idenconst_1}{6 C_2}}$ holds,
and $m_1,n_1,m_2,n_2$ satisfy their respective stated conditions, then the stated instantiations of $\csalg_1,\csalg_2$ (along with the 
stated choices of $\arbnoisebd_1,\arbnoisebd_2$) ensures $\est{\univsupp} = \univsupp$ and $\est{\bivsupp} = \bivsupp$ with probability at least 
$1 - \exp(-\tilde{c}_4 n_2) - 2\exp(-\tilde{c}_2 n_1)$. 
\jvyb{The stated lower bound on the success probability of identifying each $\totsupp_i$ ($i=1,\dots,\order$) 
follows readily from \eqref{eq:useful_union_bd_biv} by plugging 
$\calA = \cap_{i=3}^{\order} \set{\est{\totsupp_i} \neq \totsupp_i}$ and 
$\calB = \cap_{i=1}^{2} \set{\est{\totsupp_i} \neq \totsupp_i}$. }
This completes the proof for exact recovery of $\totsupp_i$'s for all $i \in [\order]$.

Finally, the stated sample complexity bound for the total number of queries made by Algorithm \ref{algo:multiv_spam} 
follows in a straightforward manner by plugging in  
\begin{equation*}
m_i = \Omega\left(\jvyb{3^{1/\alpha} \sqrt{i}} L^{1/\alpha}D_i^{-1/\alpha} \right), \quad i \in [\order]; \quad n_i = \Omega(i \abs{\totsupp_i}^2 \log \abs{\calP_i}), \quad \ 3 \leq i \leq \order
\end{equation*}
along with the complexity bounds for $n_1,n_2$ into the expression for total number of samples from Lemma \ref{lemma:multiv_spam_algo}.
This completes the proof.
\end{proof}

\paragraph{Gaussian noise model.} In the Gaussian noise model with noise samples 
i.i.d. Gaussian ($\sim \calN(0,\sigma^2)$) across queries, we again
reduce the variance via re-sampling each query $N_i$ times (during the estimation of $\totsupp_i$)
for every $i\in[\order]$ and averaging the values.
We show that if the noise variance $\sigma^2$ is sufficiently small, then 
Algorithm \ref{algo:multiv_spam} recovers $\totsupp_i$ exactly for each $i \in [\order]$, 
provided the parameters $m_i,n_i$; $i \in [\order]$, are well chosen.
\begin{theorem} \label{thm:main_multiv_gauss_noise}
For the Gaussian noise model with i.i.d. noise samples $\sim \calN(0,\sigma^2)$, consider Algorithm \ref{algo:multiv_spam} wherein 
we resample each query $N_i$ times during estimation of $\totsupp_i$ and average the values. 
Let\begin{itemize}
\item[(a)] $\csalg_i = (P1)$ with $\arbnoisebd_i = 2^{i/2}(1+\varepsilon)\sigma \sqrt{n_i/N_i}$ in \eqref{eq:l1min_quadconstr} for $3 \leq i \leq \order$,
\item[(b)] $\csalg_2 = (P2)$ with $\arbnoisebd_2 = 2(1+\varepsilon)\sigma n_2/\sqrt{N_2}$ in \eqref{eq:sparserec_prog_ordtwo} and
\item[(c)] $\csalg_1 = (P1)$ with $\arbnoisebd_1 = \sqrt{2}(1+\varepsilon)\sigma \sqrt{n_1/N_1}$ in \eqref{eq:l1min_quadconstr}, respectively,
\end{itemize}
for some $\varepsilon \in (0,1)$.
Let $\hashfam_i^{\calP_i}$ be a $(\calP_i,i)$ hash family for each $2\le i \le \order$, and 
denote $\calP_i$ to be the set $\calP \subseteq [d]$ at the beginning of $i^{\rm th}$ iteration with $\calP_{\order} = [d]$. If
\begin{align*}
N_i &\geq \left\lfloor \frac{9 C_2^2 (1+\varepsilon)^2 2^i \sigma^2}{\idenconst_i^2} \right\rfloor + 1, \quad 
n_i \geq \max\set{\tilde{c}_6 \abs{\totsupp_i}^2 \log {\abs{\calP_i} \choose i}, \frac{2\log[(2m_i+1)^i \abs{\hashfam_i^{\calP_i}}]}{c_3 \varepsilon^2}}; 
\quad 3 \leq i \leq \order \\
m_i &\geq \left(\frac{3L (\sqrt{i})^{\alpha}}{\idenconst_i}\right)^{1/\alpha}; \quad i \in [\order], \\
N_2 &\geq \left\lfloor \frac{72C_4^2 (1+\varepsilon)^2 \sigma^2}{\pi \idenconst_2^2} \right\rfloor + 1, \ 
n_2 \geq \max\set{\tilde{c}_3 \abs{\bivsupp} \log\left(\frac{\abs{\calP_2}^2}{\abs{\bivsupp}}\right), \frac{2\log[(2m_2+1)^2 e \abs{\hashfam_2^{\calP_2}}]}{\tilde c_5 \varepsilon^2}}, \\ 
N_1 &\geq \left\lfloor \frac{18C_2^2 (1+\varepsilon)^2 \sigma^2}{\idenconst_1^2} \right\rfloor + 1, \  
n_1 \geq \max\set{\tilde{c}_1  \abs{\univsupp} \log \left(\frac{\abs{\calP_1}}{\abs{\univsupp}}\right), \frac{2\log(2m_1+1)}{c_3 \varepsilon^2}}
\end{align*}
hold, then 
$\est{\totsupp_i} = \totsupp_i$ for all $i \in [\order]$ with probability at least 
%
\begin{align}
1 &- \sum_{i = 3}^{\order} \Bigl[\exp\left(-\frac{\tilde{c}_7 n_i}{\abs{\totsupp_i}^2}\right) + 2\exp\left(-\frac{c_3\varepsilon^2 n_i}{2}\right)\Bigr] \nonumber\\
&- \exp(-\tilde{c}_4 n_2) - \exp\left(-\frac{\tilde c_5 n_2 \varepsilon^2}{2}\right) - 2\exp(-\tilde{c}_2 n_1) - 2\exp\left(-\frac{c_3 \varepsilon^2 n_1}{2}\right). 
\end{align}
The constants $C_4,\tilde{c}_3,\tilde{c}_4, \tilde{c}_1, \tilde{c}_2, C_2, \tilde{c}_6, \tilde{c}_7 > 0$ are 
as explained in Theorem \ref{thm:main_multiv_arbnoise}, while $\tilde c_5, c_3 > 0$ come from Corollaries 
\ref{cor:gauss_noise_bilin_rec}, \ref{cor:gauss_noise_lin_rec} respectively. The total number of queries made is
\begin{align*}
&\sum_{i=1}^{\order} N_i 2^i (2m_i + 1)^i n_i \abs{\hashfam_i^{\calP_i}} \\
&= 
\Omega\left(\sum_{i=3}^{\order}\Bigl[\jvyb{\bar{c}_i' \bar{c}_i^i i} \abs{\totsupp_i}^2 \log(\abs{\calP_i}) \abs{\hashfam_i^{\calP_i}}\Bigr] 
+ \jvyb{\bar{c}_2'}\abs{\bivsupp} \log\left(\frac{\abs{\calP_2}^2}{\abs{\bivsupp}}\right) \abs{\hashfam_2^{\calP_2}} 
+ \jvyb{\bar{c}_1'}\abs{\univsupp} \log \left(\frac{\abs{\calP_1}}{\abs{\univsupp}}\right)\right)
\end{align*}
where \jvyb{for $i=3,\dots,\order$, $\bar{c}_i' > 1$ depends on $\sigma,\idenconst_i$, and $\bar{c}_i > 1$ depends 
on $L,\idenconst_i,\alpha,i$. Moreover, $\bar{c}_1',\bar{c}_2' > 1$ are as in 
Theorem \ref{thm:main_biv_gauss_noise}}.
\end{theorem}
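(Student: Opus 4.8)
The plan is to follow the template of the bounded-noise argument in Theorem \ref{thm:main_multiv_arbnoise}, replacing each bounded-noise recovery guarantee by its i.i.d. Gaussian counterpart and tracking the variance reduction obtained by resampling. As there, I would argue by (downward) induction over the iterations $i = \order, \order-1, \dots, 1$ of Algorithm \ref{algo:multiv_spam}, conditioning at the start of iteration $i$ on the event that $\est{\totsupp_l} = \totsupp_l$ for all $l > i$, so that the problem has reduced to an order-$i$ SPAM on the set $\calP_i$ with $\totsupp_i^{(1)}, \dots, \totsupp_1 \subset \calP_i$.

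First I would treat a generic iteration $i$ with $3 \le i \le \order$. The key computation is the noise variance after averaging: resampling each query $N_i$ times makes each $\noise_{s,z} \sim \calN(0, \sigma^2/N_i)$, so by \eqref{eq:multi_noise1} the combined measurement noise $\noise_s = \sum_{z=1}^{2^i}(-1)^{\digit(z-1)}\noise_{s,z}$ is $\calN(0, 2^i\sigma^2/N_i)$ (the signs leave the variance unchanged, and there are $2^i$ independent terms). This justifies the choice $\arbnoisebd_i = 2^{i/2}(1+\varepsilon)\sigma\sqrt{n_i/N_i}$. I would then invoke the Gaussian analogue of Theorem \ref{thm:arbnoise_multlin_rec}, obtained exactly as Corollary \ref{cor:gauss_noise_lin_rec} follows from Theorem \ref{thm:arbnoise_lin_rec}: with $n_i \ge \tilde c_6 \abs{\totsupp_i}^2 \log{\abs{\calP_i} \choose i}$, the matrix $\matB$ of \eqref{eq:lin_sys_multiv_spam} satisfies $\ell_2/\ell_2$ RIP with probability at least $1 - \exp(-\tilde c_7 n_i/\abs{\totsupp_i}^2)$; conditioned on this RIP event (which is drawn once per iteration, independently of $\hashfn$ and $\vecx$), each fixed query point gives $\norm{\est{\origsparsvec}(\vecx;\calA) - \origsparsvec(\vecx;\calA)}_\infty \le C_2\, 2^{i/2}(1+\varepsilon)\sigma/\sqrt{N_i} =: \epsilon_i$ with probability at least $1 - 2\exp(-c_3\varepsilon^2 n_i)$, the best-$\abs{\totsupp_i}$-term error vanishing since $\origsparsvec(\vecx;\calA)$ is exactly $\abs{\totsupp_i}$-sparse.

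Next I would take a union bound of this recovery-error event over all $(2m_i+1)^i\abs{\hashfam_i^{\calP_i}}$ pairs $(\hashfn,\vecx)$ with $\hashfn \in \hashfam_i^{\calP_i}$ and $\vecx \in \baseset(\hashfn)$; the stated condition $n_i \ge 2\log[(2m_i+1)^i\abs{\hashfam_i^{\calP_i}}]/(c_3\varepsilon^2)$ is precisely what absorbs this multiplicative factor while halving the exponent, so that $\csalg_i$ is $\epsilon_i$-accurate everywhere with probability at least $1 - \exp(-\tilde c_7 n_i/\abs{\totsupp_i}^2) - 2\exp(-c_3\varepsilon^2 n_i/2)$. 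The stated choice of $N_i$ guarantees $\epsilon_i < \idenconst_i/3$, so Lemma \ref{lemma:multiv_spam_algo} (together with $m_i \ge (3L(\sqrt{i})^\alpha/\idenconst_i)^{1/\alpha}$) yields $\est{\totsupp_i} = \totsupp_i$. For the remaining stages $i=2$ and $i=1$, applied on the reduced sets $\calP_2$ and $\calP_1$, I would simply quote the arguments already carried out in Theorems \ref{thm:main_biv_gauss_noise} and \ref{thm:main_univ_gauss_noise}, which rest on Corollaries \ref{cor:gauss_noise_bilin_rec} and \ref{cor:gauss_noise_lin_rec} and contribute the terms $\exp(-\tilde c_4 n_2) + \exp(-\tilde c_5 n_2\varepsilon^2/2)$ and $2\exp(-\tilde c_2 n_1) + 2\exp(-c_3\varepsilon^2 n_1/2)$ to the failure probability.

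Finally I would assemble the overall success probability by the conditional union bound \eqref{eq:useful_union_bd_multiv} applied to the events $\calA_i = \set{\est{\totsupp_i} \neq \totsupp_i}$, $i=3,\dots,\order$, and then fold in the bivariate and univariate stages using \eqref{eq:useful_union_bd_biv}; this reproduces the claimed probability verbatim. The query count follows by observing that iteration $i$ makes $N_i 2^i$ queries for each base point and each of the $n_i$ \randvar vectors, i.e. $N_i 2^i (2m_i+1)^i n_i \abs{\hashfam_i^{\calP_i}}$ queries in total, summed over $i \in [\order]$. The only genuinely delicate point is the probability bookkeeping across the recursive reduction: each iteration's guarantee is conditional on exact recovery of all higher-order supports (since $\calP_i$ is only well defined then), so the union bound must be applied conditionally and in the correct order; the variance computation and union-bound arithmetic are otherwise routine given the results of Section \ref{sec:sparse_rec_results}.
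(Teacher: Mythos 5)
Your proposal matches the paper's proof essentially step for step: the same conditioning on exact recovery of the higher-order supports at the start of iteration $i$, the same variance computation $\noise_s \sim \calN(0,2^i\sigma^2/N_i)$ after resampling, the same combination of the RIP bound from Theorem \ref{thm:arbnoise_multlin_rec} with Corollary \ref{cor:gauss_noise_lin_rec}, the same union bound over the $(2m_i+1)^i\abs{\hashfam_i^{\calP_i}}$ pairs $(\hashfn,\vecx)$ absorbed by the stated condition on $n_i$, the same thresholding via Lemma \ref{lemma:multiv_spam_algo}, and the same handoff to Theorem \ref{thm:main_biv_gauss_noise} for $i=1,2$ with the conditional union bounds \eqref{eq:useful_union_bd_multiv} and \eqref{eq:useful_union_bd_biv}. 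Incidentally, your expression $\epsilon_i = 2^{i/2}C_2(1+\varepsilon)\sigma/\sqrt{N_i}$ is the correct one; the paper's display defines $\epsilon_i$ with a spurious factor $\sqrt{n_i}$, as is clear from the condition on $N_i$ that is meant to force $\epsilon_i < \idenconst_i/3$.
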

\begin{proof}
Again, in the beginning of $i^{\rm th}$ iteration $3 \leq i \leq \order$ with $\est{\totsupp}_l = \totsupp_l$ for each $l > i$,
the model has reduced to an order $i$ sparse additive model on the set $\calP_i \subseteq [d]$, with 
$\totsupp_i^{(1)},\totsupp_{i-1}^{(1)},\dots,\totsupp_1 \subset \calP_i$. 

The noise vector is again given by \eqref{eq:multi_noise1}
As a consequence of resampling each query point $N_i$ times and averaging, we get
$\noise_{s,z} \sim \calN(0,\frac{\sigma^2}{N_i})$ i.i.d. for all $s,z$ and, therefore, $\noise_s \sim \calN(0,\frac{2^i\sigma^2}{N_i})$ i.i.d.
for each $s$. From part $1$ of Theorem \ref{thm:arbnoise_multlin_rec}, we know that if 
$n_i \geq \tilde{c}_6 \abs{\totsupp_i}^2 \log {\abs{\calP_i} \choose i}$, then with probability at least 
$1-\exp\left(-\frac{\tilde{c}_7 n_i}{\abs{\totsupp_i}^2}\right)$ (with $\tilde{c}_6,\tilde{c}_7$ depending only on $c_6,c_7$), 
the matrix $\matB \in \matR^{n_i \times {\abs{\calP_i} \choose i}}$ satisfies $\ell_2/\ell_2$ RIP with 
$\riptt_{2\abs{\totsupp_i}} < \sqrt{2}-1$. Let us condition on this event. 
Then by setting $\csalg_i = (P1)$ with $\arbnoisebd_i = (1+\varepsilon)\left(\frac{2^{i/2}\sigma \sqrt{n_i}}{\sqrt{N_i}}\right)$, 
and invoking Corollary \ref{cor:gauss_noise_lin_rec}, it follows for any given $\hashfn \in \hashfam_{i}^{\calP_i}$, $\vecx \in \baseset(\hashfn)$ 
that 
\begin{equation} \label{eq:multiv_tog_gauss_1}
\norm{\est{\origsparsvec}(\vecx;\calA) - \underbrace{\origsparsvec(\vecx;\calA)}_{\abs{\totsupp_i} \ \text{sparse}}}_{\infty} \ \leq \ 
\norm{\est{\origsparsvec}(\vecx;\calA) - \origsparsvec(\vecx;\calA)}_{2}\ \leq\, 2^{i/2}C_2(1+\varepsilon)\sigma \sqrt{n_i/N_i}=:\epsilon_i
\end{equation} 
with probability at least $1 - 2\exp(-c_3 \varepsilon^2 n_i)$. By the union bound, it follows that 
\eqref{eq:multiv_tog_gauss_1} holds for all $\hashfn \in \hashfam_{i}^{\calP_i}$, $\vecx \in \baseset(\hashfn)$, with probability at least
\begin{align*}
&1 - 2(2m_i+1)^i\abs{\hashfam_{i}^{\calP_i}}\exp(-c_3 \varepsilon^2 n_i) = 1 - 2\exp[\log[(2m_i+1)^i\abs{\hashfam_{i}^{\calP_i}}] - c_3 \varepsilon^2 n_i] \\
&\qquad\geq 1 - 2\exp\left(-\frac{c_3 \varepsilon^2 n_i}{2}\right)
\end{align*}
if $n_i \geq \frac{2\log[(2m_i+1)^i|\hashfam_i^{\calP_i}|]}{c_3 \varepsilon^2}$ holds. This gives us the stated condition on $n_i$ for $3\leq i \leq \order$. 
Hence for the aforementioned choice of $n_i$, $\csalg_i$ is $\epsilon_i$-accurate for each 
$\hashfn \in \hashfam_{i}^{\calP_i}$, $\vecx \in \baseset(\hashfn)$, with probability at least 
$1 - 2\exp\left(-\frac{c_3 \varepsilon^2 n_i}{2}\right) - \exp\left(-\frac{\tilde{c}_7 n_i}{\abs{\totsupp_i}^2}\right)$.
By the condition on $N_i$, we obtain $\epsilon_i<D_i/3$ and from Lemma \ref{lemma:multiv_spam_algo}, it follows that
%
%
for the stated choice of $m_i$ and $\epsilon_i$, we have $\est{\totsupp_i} = \totsupp_i$. 
Thus we conclude by the union bound \jvyb{in \eqref{eq:useful_union_bd_multiv}} that $\est{\totsupp_i} = \totsupp_i$ 
holds for all $3\leq i \leq \order$, \jvyb{with high probability}, for the stated choice of $m_i,n_i,\epsilon_i,N_i$.

Finally, say $\est{\totsupp_i} = \totsupp_i$ holds for all $3\leq i \leq \order$. Then we are 
left with a bivariate SPAM on the set $\calP_2$, with $\univsupp,\totsupp_2^{(1)} \subset \calP_2$.  
Thereafter, we only need to invoke Theorem \ref{thm:main_biv_gauss_noise}, which guarantees that  
$\est{\totsupp_2} = \totsupp_2$ and $\est{\totsupp_1} = \totsupp_1$ \jvyb{holds with high probability} 
for the stated choices of $\csalg_i,\arbnoisebd_i,m_i,n_i,\epsilon_i,N_i$; $i=1,2$. 
\jvyb{The lower bound on the success probability of identifying each 
$\totsupp_i$ ($i=1,\dots,\order$) then follows via the same argument as in the proof of 
Theorem \ref{thm:main_multiv_arbnoise}.}
This completes the proof for the exact recovery of $\totsupp_i$'s. 

Since each query is resampled $N_i$ times, with $N_i = \jvyb{1 + \Omega(2^i \sigma^2/\idenconst_i^2)}$, we obtain the stated sample complexity bound 
by proceeding as in the proof of Theorem \ref{thm:main_multiv_arbnoise}. 
\end{proof}

\begin{remark} \label{rem:samp_compl_fin_multiv}
\jvyb{As discussed in Section \ref{sec:spam_biv}, we can construct $\thashfam$ (for $t \geq 2$) via a simple 
randomized method (in time linear in output size) (eg., \cite[Section 5]{Devore2011}) where 
$\abs{\thashfam} = O(t e^t \log d)$, with probability at least $1-d^{-\Omega(t)}$. Plugging this into 
Theorem \ref{thm:main_multiv_arbnoise} leads to a (worst case) sample complexity of
\begin{equation*}
\Omega\left(\sum_{i=3}^{\order}\Bigl[c_i^i e^i i^{2} \abs{\totsupp_i}^2 (\log(\abs{\calP_i}))^2 \Bigr] 
+ \bar{c}_2 \abs{\bivsupp} \log\left(\frac{\abs{\calP_2}^2}{\abs{\bivsupp}}\right) \log(\abs{\calP_2}) 
+ \bar{c}_1 \abs{\univsupp} \log \left(\frac{\abs{\calP_1}}{\abs{\univsupp}}\right)\right)
\end{equation*}
in the setting of arbitrary bounded noise. Since $\abs{\calP_i} = O(d)$, this leads to the expression 
in \eqref{eq:main_sam_compl_res}.}
\end{remark} 
\section{Discussion} \label{sec:disc_conc_rems}
%
%
%
\jvyb{We start by providing a brief summary of our results with remarks concerning certain aspects of our algorithm.}
We then discuss how the components $\phi$ can be identified, and also compare our results with closely related work. 
Finally, we discuss an alternative approach described in \cite{Tyagi_spamint_long16} in more detail. 
%
\jvyb{\paragraph{Summary of our results.} Let us recall that in the setting where the 
queries are corrupted with arbitrary bounded noise, Algorithm \ref{algo:multiv_spam} 
succeeds with high probability in identifying each $\totsupp_i$ for $i=1,\dots,\order$ provided the 
noise level is sufficiently small, and makes 
\begin{equation*} 
\Omega\Biggl(\sum_{i=3}^{\order}\Bigl[\underbrace{c_i^i \jvyb{i^{2}} \abs{\totsupp_i}^2 \log^2 d}_{\text{Identifying } \totsupp_i} \Bigr] 
+ \underbrace{c_2 \abs{\bivsupp} \log\left(\frac{d^2}{\abs{\bivsupp}}\right) \log d}_{\text{Identifying } \totsupp_2}
+  \underbrace{c_1 \abs{\univsupp} \log \left(\frac{d}{\abs{\univsupp}}\right)}_{\text{Identifying } \totsupp_1}\Biggr)
\end{equation*} 
queries. This is the same expression in \eqref{eq:main_sam_compl_res} and is a consequence of 
Theorem \ref{thm:main_multiv_arbnoise} (see Remark \ref{rem:samp_compl_fin_multiv}). 
For each $3 \leq i \leq \order$, the term $c_i^i \jvyb{i^{2}} \abs{\totsupp_i}^2 \log^2 d$ represents the sample complexity 
of identifying $\totsupp_i$. In particular, $c_i^i$ 
arises from the size of the $i$-dimensional grid $\baseset(\hashfn)$ for a given hash function 
$\hashfn$ in Algorithm \ref{algo:multiv_spam}, with $c_i$ depending on the smoothness parameters $L,\alpha,D_i$
and scaling as $\sqrt{i}$ with $i$.
The term $i \abs{\totsupp_i}^2 \log d = \Theta(\abs{\totsupp_i}^2 \log{d\choose i})$ 
arises from the sample complexity of estimating a $i^{th}$ order sparse multilinear function in $d$ variables (with $\abs{\totsupp_i}$ 
terms) and follows from Theorem \ref{thm:arbnoise_multlin_rec}. Finally, the term $i e^i \log d$ arises from 
the size of a $(d,i)$ hash family (see Remark \ref{rem:samp_compl_fin_multiv}) where the $e^i$ factor is subsumed by 
$c_i$. The term $c_2 \abs{\bivsupp} \log\left(\frac{d^2}{\abs{\bivsupp}}\right) \log d$ is the sample complexity 
for identification of $\bivsupp$. Here, $c_2$ arises from the size of a two-dimensional grid in $[-1,1]^2$; 
$\abs{\bivsupp} \log\left(\frac{d^2}{\abs{\bivsupp}}\right)$ is the sample complexity of estimating a sparse 
bilinear function (see Theorem \ref{thm:arbnoise_bilin_rec}), and $\log d$ arises from the size of a $(d,2)$ hash 
family (see Remark \ref{rem:samp_compl_fin_biv}). Finally, 
the term $c_1 \abs{\univsupp} \log \left(\frac{d}{\abs{\univsupp}}\right)$ 
is the sample complexity for identification of $\univsupp$ 
where $c_1$ arises from the size of a grid in $[-1,1]$ and  
$\abs{\univsupp} \log \left(\frac{d}{\abs{\univsupp}}\right)$ is the sample complexity 
of estimating a sparse linear function (see Theorem \ref{thm:arbnoise_lin_rec}).} 

\jvyb{The following points are worth noting for our algorithms.}
\begin{itemize}
\item \jvyb{In general, we do not need to know the exact values
of the smoothness parameters $\alpha, L, D_i$ for $i = 1,2,...,\order$ . It suffices to use 
lower bounds for $\alpha,D_i$ and an upper bound for $L$. Similarly, it is also possible to work with just
the upper bound estimates of $\order$ and $\abs{\univsupp},\dots,\abs{\totsupp_{\order}}$.}

\item \jvyb{The computational cost of Algorithm \ref{algo:multiv_spam} is typically dominated by $\csalg$. 
At iteration $i = \order$, for a given hash function $\hashfn$ and base point $\vecx \in \baseset(\hashfn)$,  
the computational complexity of $\csalg$ is at most polynomial in $(n_{\order},d^{\order})$ 
(recall Remark \ref{rem:comp_cost_sparse_rec}). Since there are $O(m_{\order}^{\order} \abs{\mathcal{H}_{\order}^{d}})$ 
base points, the total cost (over $i = 1,\dots,\order$) is at most polynomial in the number of queries and $d^{\order}$. }
\end{itemize}

%
%
%
%
\paragraph{Identification of $\phi_{\vecj}$'s.} 
Once the sets $\univsupp, \bivsupp,\dots,\totsupp_{\order}$ are known, then one can identify each component in the representation 
\eqref{eq:mod_unique_multivar} by querying $f$ along the corresponding canonical subspaces. Indeed, for a given $1 \leq p \leq \order$, 
and $1 \leq r \leq p$, let us see how we can identify the $r$-variate component $\phi_{\vecj}$ for a given $\vecj \in \totsupp_{p}^{(r)}$. 
Consider any $\vecx = (x_1 \dots x_d)^T \in [-1,1]^d$ which is supported on $\totsupp_{p}^{(1)}$, i.e., $x_l = 0$ if $l \not\in \totsupp_{p}^{(1)}$. 
We then have from \eqref{eq:mod_unique_multivar} that
\begin{align*} 
f(\vecx) = \modmean + \sum_{u \in \totsupp_{p}^{(1)}} \phi_{u}(x_{u}) 
+ \sum_{\vecu \in \totsupp_{p}^{(2)}} \phi_{\vecu}(x_{\vecu}) \quad
+ \dots 
+ \sum_{\vecu \in \totsupp_{p}^{(p-1)}} \phi_{\vecu}(x_{\vecu}) 
+ \sum_{\vecu \in \totsupp_{p}} \phi_{\vecu}(x_{\vecu}).
\end{align*}
In particular, it follows from \eqref{eq:anova_comp_exp} that
\begin{align} \label{eq:discuss_obtain_phij}
\sum_{\veci \subseteq \vecj}(-1)^{|\vecj|-|\veci|} f(\setproj_{\veci}(\vecx)) = \phi_{\vecj}(\vecx_{\vecj}),
\end{align}
with $\setproj_{\veci}(\vecx)$ denoting the projection of $\vecx$ on the set of variables $\veci$. 
Hence by querying $f$ at the $2^{\abs{\vecj}}$ points $\set{\setproj_{\veci}(\vecx): \veci \subseteq \vecj}$, we can 
obtain the sample $\phi_{\vecj}(\vecx_{\vecj})$ via \eqref{eq:discuss_obtain_phij}. Consequently, by choosing 
$\vecx_j$ from a regular grid on $[-1,1]^{\vecj}$, we can estimate $\phi_{\vecj}$ from its corresponding samples 
via standard quasi interpolants (in the noiseless case) or tools from non parametric regression (in the noisy case).

\paragraph{SPAMs.} To begin with, we note that our work generalizes the recent results of Tyagi et al. \cite{Tyagi_aistats16, Tyagi_spamint_long16}
in two fundamental ways. Firstly, we provide an algorithm for the general case where $\order \geq 2$ is possible, the results in 
\cite{Tyagi_aistats16, Tyagi_spamint_long16} are for the case $\order = 2$. Secondly, our results only require $f$ to be H\"older continuous 
and not continuously differentiable as is the case in \cite{Tyagi_aistats16, Tyagi_spamint_long16}. We also mention that our sampling bounds 
for the case $\order = 2$ are linear in the sparsity $\abs{\univsupp} + \abs{\bivsupp}$ even when the noise samples are i.i.d. Gaussian. 
However the algorithms in \cite{Tyagi_aistats16, Tyagi_spamint_long16} have super-linear dependence on the sparsity in this noise model.
This is unavoidable in \cite{Tyagi_aistats16, Tyagi_spamint_long16} due to the localized nature of the sampling scheme, 
wherein finite difference operations are used to obtain linear measurements of the sparse gradient and Hessian of $f$. 
In the presence of noise, this essentially leads to the noise level getting scaled up by the step size parameter, 
and thus reducing the variance of noise necessarily leads to a resampling factor which is super linear in sparsity.  

Dalalyan et al. \cite{Dala2014} recently studied models of the form \eqref{eq:gen_spam_eq} with a 
\jvyb{set $\totsupp_{r_0}$ of $r_0$-wise interaction terms.} 
They considered the Gaussian white noise model, which while not 
the same as the usual regression setup, is known to be asymptotically equivalent to the same.
They derived non-asymptotic $L_2$ error rates in expectation for an estimator, with $f$ lying in a Sobolev 
space, and showed the rates to be minimax optimal. However, they do not consider the problem of identification 
of the interaction terms. Moreover, as noted in \cite{Dala2014}, the computational cost of their method
typically scales exponentially in 
\jvyb{$\abs{\totsupp_{r_0}}, r_0, d.$} Yang et al. \cite{Yang2015} also studied models of the 
form \eqref{eq:gen_spam_eq} in a Bayesian setting, wherein they place a Gaussian  prior (GP) on $f$ and 
can carry out inference via the posterior probability distribution. They derive an estimator
and provide error rates in the empirical $L_2$ norm for H\"older smooth $f$, but do not address the 
problem of identifying the interaction terms. 
%
\paragraph{Functions with few active variables.} There has been a fair amount of work in the literature on functions which intrinsically depend on a small subset of $k \ll d$ variables \cite{Devore2011, karin2011, Comming2011, Comming2012}. To our knowledge, this model was first considered in \cite{Devore2011}, and in fact, our idea of using a family of hash functions is essentially motivated from \cite{Devore2011} wherein such a family was used to construct the query points.  A prototypical result in \cite{Devore2011} is an algorithm that identifies the set of active variables with \jvyb{$(m+1)^k \abs{\calH_{k}^d} + k \log d$ queries, with $m > 0$} 
being the number of points on a uniform grid along a coordinate. 
\jvyb{The randomized construction of $\calH_{k}^d$ yields $\abs{\calH_k^d} = O(k e^k \log d)$ (see \cite[Section 5]{Devore2011}) 
which results in a sample complexity of $m^k e^k k \log d$.} The exponential dependence on $k$ is unavoidable 
in the worst case, and indeed our bounds are also exponential in $\order$ (see \eqref{eq:main_sam_compl_res}). 
\begin{itemize}
\item When $\order \geq k$, \eqref{eq:gen_spam_eq} is clearly a generalization of this model. 

\item In general, the model \eqref{eq:gen_spam_eq} is also a function of few active variables (those part of $\totsupp_i$'s); more precisely, at most 
$\sum_{i=1}^{\order} i \abs{\totsupp_i}$ variables. However, using a method that is designed generically for learning intrinsically $k$-variate functions would typically have sample complexity scaling exponentially with $\sum_{i=1}^{\order} i \abs{\totsupp_i}$. This is clearly suboptimal; our bounds in general depend at most polynomially on the size of $\totsupp_i$'s. This dependence is actually linear for the case $\order = 2$.
\end{itemize}
\paragraph{An alternative approach.}
Next, we discuss an alternative approach for learning the model \eqref{eq:mod_unique_multivar} which 
was mentioned already in \cite{Tyagi_spamint_long16}. It is based on a truncated expansion in a bounded orthonormal system.
For simplicity, we assume that $f$ takes the form
\begin{equation}\label{eq:mod_multiv_simple}
f(\vecx)=\sum_{\vecj\in\totsupp_{\order}}\phi_{\vecj}(\vecx_{\vecj}),\quad \vecx\in[-1,1]^d.
\end{equation}

Let $\set{\psi_k}_{k \in \mathbb{Z}}$ be an orthonormal basis in $L_2([-1,1])$ with respect to the normalized Lebesgue measure on $[-1,1]$.
We assume that $\psi_0 \equiv 1$
and we define $\set{\psi_{\veci}}_{\veci \in \mathbb{Z}^d}$ to be the tensor product orthonormal basis in $L_2([-1,1]^d)$, where 
\begin{align*}
\psi_{\veci}(\vecx) = \bigotimes_{l=1}^d \psi_{i_l} (x_l),\quad \vecx\in[-1,1]^d.
\end{align*}
For the components of \eqref{eq:mod_multiv_simple} we obtain (for each $\vecj \in \totsupp_{\order}$) the decomposition
\begin{equation} \label{eq:orth_basis_rep_temp1}
\phi_{\vecj}(\vecx_{\vecj}) = \sum_{\veci \in \matZ^d} a_{\vecj, \veci} \psi_{\veci}(\vecx_{\vecj})=
\sum_{\veci \in \matZ^d:{\operatorname{supp\,}}\veci\subseteq \vecj} a_{\vecj, \veci} \psi_{\veci}(\vecx_{\vecj}).
\end{equation}
In the last identity, we used that $\int_{-1}^1\psi_{k}(t)dt=0$ for every $k\in\matZ\setminus\{0\}$ and, therefore,
$a_{\vecj,\veci}=\langle \phi_{\vecj},\psi_{\veci}\rangle=0$ if $i_l\not=0$ for some $l\not\in\vecj.$

Using the smoothness of $\phi_{\vecj}$, we can truncate \eqref{eq:orth_basis_rep_temp1} at level $N\in\matN$ (which we will determine later) and obtain
\begin{equation}\label{eq:orth_basis_rep_temp3}
\phi_{\vecj}(\vecx_{\vecj}) = \sum_{\substack{\veci \in \matZ^d:\|\veci\|_\infty\le N\\{\operatorname{supp\,}}\veci\subseteq \vecj}} a_{\vecj, \veci} \psi_{\veci}(\vecx_{\vecj})
+r_{\vecj}(\vecx_j)\quad\text{for all}\quad 
\vecj 
\in \totsupp_{\order}.
\end{equation}
Summing up over $\vecj\in \totsupp_{\order}$ we arrive at
\begin{equation}\label{eq:orth_basis_rep_temp4}
f(\vecx)=\sum_{\vecj\in{\totsupp}_{\order}}
\biggl(\sum_{\substack{\veci \in \matZ^d:\|\veci\|_\infty\le N\\{\operatorname{supp\,}}\veci\subseteq \vecj}} a_{\vecj, \veci} \psi_{\veci}(\vecx_{\vecj})\biggr)
+r(\vecx)\quad\text{with}\quad r(\vecx)=\sum_{\vecj\in{\totsupp}_{\order}}r_{\vecj}(\vecx_j).
\end{equation}
The worst-case error of the uniform approximation of H\"older continuous functions  with exponent $\alpha>0$ 
(i.e., functions from the unit ball of $C^{\alpha}$) is bounded
from below by the Kolmogorov numbers of the embedding of $C^{\alpha}$ into $L_{\infty}$, cf. \cite{Vyb08},
$$
\|r_{\vecj}(\vecx_\vecj)\|_\infty \approx [(2N)^{\order}]^{-\alpha/\order}=(2N)^{-\alpha}\quad\text{and}\quad \|r(\vecx)\|_\infty\approx |{\totsupp}_{\order}|(2N)^{-\alpha}.
$$
Thus for any $\epsilon \in (0,1)$, we typically require $N \gtrsim \Bigl(\frac{\abs{\totsupp_{\order}}}{\epsilon}\Bigr)^{1/\alpha}$ to 
ensure $\|r(\vecx)\|_\infty \lesssim \epsilon$.

Furthermore, the number of degrees of freedom \jvyb{$D$ in \eqref{eq:orth_basis_rep_temp4} is lower bounded} by
$$
D\ge {d\choose \order}(2N)^{\order}.
$$
If the basis functions $\psi_{\veci}$ are uniformly bounded (i.e. they form a Bounded Orthonormal System (BOS)),
it was shown in \cite[Theorem 12.31]{Foucart13} or \cite[Theorem 4.4]{RauhutStruct2010}, that one can recover
$\veca=(a_{\vecj,\veci})$ in \eqref{eq:orth_basis_rep_temp4} from $m\gtrsim |{\totsupp}_{\order}|(2N)^{\order}\log^4(D)$ random samples
of $f$ by $\ell_1$-minimization. Plugging in our estimates on $D$ and $N$, we arrive at $m\gtrsim |{\totsupp}_{\order}|^{1+\order/\alpha}\cdot \log^4(d)$.
This bound is always superlinear in $|{\totsupp}_{\order}|$ and (if $\alpha\in(0,1]$) with the power of dependence at least $1+\order.$

\bibliographystyle{plain}
\bibliography{SPAMgen_main}
\appendix
\section{Proofs for Section \ref{sec:problem}} \label{sec:app_proofs_problem}

\begin{proof}[Proof of Proposition \ref{lem:anova}] For $U\subseteq[d]$, we define
\begin{equation*}
(P_Uf)(\vecx_U)=f\Bigl(\sum_{j\in U}x_j\vece_j\Bigr)\quad \text{and}\quad f_U(\vecx_U)=\sum_{V\subseteq U}(-1)^{|U|-|V|}(P_V f)(\vecx_V).
\end{equation*}
Here, $\{\vece_1,\dots,\vece_d\}$ is the canonical basis of $\matR^d$.

By its definition, $f_U$ is a continuous function. Furthermore,
\begin{align*}
\sum_{U\subseteq[d]}f_U(\vecx_U)&=\sum_{U\subseteq[d]}\sum_{V\subseteq U}(-1)^{|U|-|V|}(P_V f)(\vecx_V)
=\sum_{V\subseteq [d]}(P_V f)(\vecx_V)\sum_{U\supseteq V}(-1)^{|U|-|V|}\\
&=\sum_{V\subseteq [d]}(P_V f)(\vecx_V)\sum_{W\subseteq [d]\setminus V}(-1)^{|W|}.
\end{align*}
\jvyb{The last sum can be rewritten as
$$
\sum_{W\subseteq [d]\setminus V}(-1)^{|W|}=\sum_{k=0}^{d-|V|}(-1)^k{d-|V|\choose k},
$$
which is equal to $(1-1)^{d-|V|}=0$ for all $V\subseteq [d]$ except $V=[d]$. This leads to \eqref{eq:anova1}.}

If $x_{j}=0$ for some $j\in U$, we get
\begin{align*}
f_U(\vecx_U)&=\sum_{V\subseteq U}(-1)^{|U|-|V|}(P_V f)(\vecx_V)\\
&=\sum_{V\subseteq U\setminus\{j\}}\Bigl[(-1)^{|U|-|V|}(P_V f)(\vecx_V)+(-1)^{|U|-|V\cup\{j\}|}(P_{V\cup\{j\}} f)(\vecx_{V\cup\{j\}})\Bigr]=0
\end{align*}
as all the terms in the last sum are equal to zero.

Finally, the uniqueness follows by induction. The statement is obvious for $d=1$. Let now $d>1$
and let us assume that a given function $f\in C([-1,1]^d)$ allows a decomposition
$$
f(\vecx)=\sum_{U\subseteq[d]}f_U(\vecx_U),\quad \vecx\in[-1,1]^d,
$$
which satisfies the properties a)-c) of Proposition \ref{lem:anova}.

Let $W$ be a proper subset of $[d]$ and put for $\vecx_W\in[-1,1]^{|W|}$
\begin{align*}
g_W(\vecx_W)=f\Bigl(\sum_{j\in W}x_j\vece_j\Bigr).
\end{align*}
Then $g_W\in C([-1,1]^{|W|})$ and using c) of Proposition \ref{lem:anova} we obtain

$$
g_W(\vecx_W)=\sum_{U\subseteq[d]}f_U\Bigl(\Bigl(\sum_{j\in W}x_j\vece_j\Bigr)_U\Bigr)
=\sum_{U\subseteq[d]}f_U\Bigl(\sum_{j\in W}x_j(\vece_j)_U\Bigr)=\sum_{U\subseteq W}f_U(\vecx_U).
$$
This decomposition of $g_W$ satisfies a)-c) of Proposition \ref{lem:anova} with $|W|\le d-1$. By the induction assumption,
this decomposition is therefore unique. We conclude, that
$f_U$ is uniquely determined for all $U\subset [d]$. Finally, also
$$
f_{[d]}(\vecx)=f(\vecx)-\sum_{U\subset[d]}f_U(\vecx_U)
$$
is uniquely determined.
\end{proof}

\begin{proof}[Proof of Proposition \ref{prop:mod_unique_bivar}]

Let $f$ be given by \eqref{eq:mod_unique_bivar} and let us assume that it satisfies all the assumptions of Proposition \ref{prop:mod_unique_bivar}.
We show that \eqref{eq:mod_unique_bivar} coincides with its Anchored-ANOVA decomposition as described in Proposition \ref{lem:anova} and \eqref{eq:mod_unique_bivar}
is therefore unique.

Let $U\subseteq[d]$ with $|U|\ge 3$. Then
\begin{align*}
f_U(\vecx_U)&=\sum_{V\subseteq U}(-1)^{|U|-|V|}(P_V f)(\vecx_V)\\
&=\sum_{V\subseteq U}(-1)^{|U|-|V|}\Bigl\{\mu+\sum_{j\in \univsupp\cup\bivsuppvar}\phi_j((x_V)_j)+\sum_{\vecj=(j_1,j_2)\in\bivsupp}\phi_\vecj((x_V)_{j_1},(x_V)_{j_2})\Bigr\}\\
&=\mu\sum_{V\subseteq U}(-1)^{|U|-|V|}+\sum_{j\in \univsupp\cup\bivsuppvar}\phi_j(x_j)\sum_{\substack{V\subseteq U\\ j\in V}}(-1)^{|U|-|V|}\\
&+\sum_{j\in \univsupp\cup\bivsuppvar}\phi_j(0)\sum_{\substack{V\subseteq U\\ j\not\in V}}(-1)^{|U|-|V|}
+\sum_{\vecj=(j_1,j_2)\in \bivsupp}\sum_{V\subseteq U}(-1)^{|U|-|V|}\phi_\vecj((x_V)_{j_1},(x_V)_{j_2}).
\end{align*}
It is easy to see, that the first three terms are zero. Finally, the last term can be split into a sum of four terms depending on if $j_1\in V$ or $j_2\in V$, i.e. terms of the kind
$$
\sum_{\vecj=(j_1,j_2)\in{\bivsupp}}\phi_\vecj(x_{j_1},x_{j_2})\sum_{\substack{V\subseteq U\\ j_1,j_2\in V}}(-1)^{|U|-|V|},
$$
which also vanish. Therefore, $f_U(\vecx_U)=0.$

If $U=\{j_1,j_2\}$ with $1\le j_1<j_2\le d$, then $\emptyset, \{j_1\}, \{j_2\}$ and $\{j_1,j_2\}$ are the only subsets of $U$ and we obtain
\begin{align*}
f_U(\vecx_U)&=\sum_{V\subseteq U}(-1)^{|U|-|V|}(P_V f)(\vecx_V)\\
&=(P_\emptyset f)(0)-(P_{j_1} f)(x_{j_1})-(P_{j_2} f)(x_{j_2})+(P_{\{j_1,j_2\}} f)(x_{j_1},x_{j_2})\\
&=f(0)-f(x_{j_1}\vece_{j_1})-f(x_{j_2}\vece_{j_2})+f(x_{j_1}\vece_{j_1}+x_{j_2}\vece_{j_2})\\
&=\sum_{j\in \univsupp\cup\bivsuppvar}[\phi_j(0)-\phi_j((x_{j_1}\vece_{j_1})_j)-\phi_j((x_{j_2}\vece_{j_2})_j)+\phi_j((x_{j_1}\vece_{j_1}+x_{j_2}\vece_{j_2})_j)]\\
&+\sum_{\vecj\in\bivsupp}[\phi_\vecj(0)-\phi_\vecj((x_{j_1}\vece_{j_1})_\vecj)-\phi_\vecj((x_{j_2}\vece_{j_2})_\vecj)+\phi_\vecj((x_{j_1}\vece_{j_1}+x_{j_2}\vece_{j_2})_\vecj)].
\end{align*}
The first sum vanishes for all $j\in \univsupp\cup\bivsuppvar$, which can be easily observed
by considering the options $j\not\in\{j_1,j_2\}, j=j_1$, or $j=j_2.$
If $(j_1,j_2)\not\in \bivsupp$, then also the second sum vanishes. If $(j_1,j_2)\in\bivsupp$,
then
$$
f_{j_1,j_2}(x_{j_1},x_{j_2})=\phi_{j_1,j_2}(x_{j_1},x_{j_2})-\phi_{j_1,j_2}(x_{j_1},0)-\phi_{j_1,j_2}(0,x_{j_2})+\phi_{j_1,j_2}(0,0)=\phi_{j_1,j_2}(x_{j_1},x_{j_2}).
$$

Finally, if $l\in[d]$ then $\emptyset$ and $\{l\}$ are the only subsets of $\{l\}$. If furthermore 
$l\not\in \univsupp\cup\bivsuppvar$, we get
\begin{align*}
f_{\{l\}}(x_l)&=-f(0)+f(x_l\vece_l)\\
&=-\Bigl(\mu+\sum_{j\in \univsupp\cup\bivsuppvar}\phi_j(0)+\sum_{\vecj\in \bivsupp}\phi_\vecj(0)\Bigr)+
\Bigl(\mu+\sum_{j\in \univsupp\cup\bivsuppvar}\phi_j(0)+\sum_{\vecj\in\bivsupp}\phi_\vecj(0)\Bigr)=0.
\end{align*}
Similarly, $f_{\{l\}}(x_l)=\phi_l(x_l)$ if $l\in \univsupp\cup\bivsuppvar$.
\end{proof}

\section{Some standard concentration results} \label{app:std_conc_results}
First, we recall that the sub-Gaussian norm of a random variable $X$ is defined as
\begin{align*}
\norm{X}_{\psi_2}=\sup_{p\ge 1}p^{-1/2}(\expec |X|^p)^{1/p}
\end{align*}
and $X$ is called sub-Gaussian random variable if $\norm{X}_{\psi_2}$ is finite. Similarly, the sub-exponential
norm of a random variable is the quantity
\begin{align*}
\norm{X}_{\psi_1}=\sup_{p\ge 1}p^{-1}(\expec |X|^p)^{1/p}
\end{align*}
and $X$ is called sub-exponential if $\norm{X}_{\psi_1}$ is finite.
Next, we recall the following concentration results for sums of i.i.d. sub-Gaussian and
sub-exponential random variables.
\begin{proposition}\cite[Proposition 5.16]{vershynin2012} \label{prop:subexp_conc}
Let $X_1,\dots,X_N$ be independent centered sub-exponential random variables, and $K = \max_i \norm{X_i}_{\psi_1}$.
Then for every $\veca = (a_1,\dots,a_N) \in \matR^N$ and every $t \geq 0$, we have
$$ \prob \left( \Bigl|\sum_i a_i X_i\Bigr| \geq t \right) \leq 2 \exp\left[-c\min\set{\frac{t^2}{K^2\norm{\veca}_2^2},\frac{t}{K \norm{\veca}_{\infty}}}\right],$$
where $c > 0$ is an absolute constant.
\end{proposition}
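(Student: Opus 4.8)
The plan is to prove this Bernstein-type inequality by the standard exponential-moment (Chernoff) method. First I would reduce the two-sided tail to a one-sided one: it suffices to bound $\prob\bigl(\sum_i a_i X_i \geq t\bigr)$, since the variables $-X_i$ are again centered with the same $\psi_1$-norm, so running the identical argument on them controls $\prob\bigl(\sum_i a_i X_i \leq -t\bigr)$, and a union bound over the two one-sided events accounts for the factor of $2$ in the statement.

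The key step, and the main technical obstacle, is a moment generating function estimate for a single centered sub-exponential variable: if $X$ is centered with $\norm{X}_{\psi_1} = K$, then $\expec[e^{\lambda X}] \leq \exp(C\lambda^2 K^2)$ for all $\abs{\lambda} \leq c/K$, where $C,c>0$ are absolute. To establish this I would start from the definition of the $\psi_1$-norm recalled in the appendix, which gives the polynomial moment bound $\expec\abs{X}^p \leq (Kp)^p$ for every $p \geq 1$. Expanding $e^{\lambda X}$ in a power series, using $\expec X = 0$ to delete the linear term, and inserting the moment bound yields $\expec[e^{\lambda X}] \leq 1 + \sum_{p\geq 2} \frac{\abs{\lambda}^p (Kp)^p}{p!}$. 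Applying $p! \geq (p/e)^p$ shows each summand is at most $(eK\abs{\lambda})^p$, so the tail of the series is dominated by a geometric series with ratio $eK\abs{\lambda}$; for $\abs{\lambda} \leq 1/(2eK)$ this sums to at most $C\lambda^2 K^2$, giving $\expec[e^{\lambda X}] \leq 1 + C\lambda^2 K^2 \leq \exp(C\lambda^2 K^2)$.

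With the single-variable estimate in hand, I would tensorize using independence: for $\abs{\lambda} \leq c/(K\norm{\veca}_\infty)$, so that $\abs{\lambda a_i} \leq c/K$ holds for every $i$, we obtain
\[
\expec\Bigl[\exp\Bigl(\lambda \sum_i a_i X_i\Bigr)\Bigr] = \prod_i \expec\bigl[e^{\lambda a_i X_i}\bigr] \leq \exp\Bigl(C\lambda^2 K^2 \sum_i a_i^2\Bigr) = \exp\bigl(C\lambda^2 K^2 \norm{\veca}_2^2\bigr).
\]
Markov's inequality applied to $\exp\bigl(\lambda \sum_i a_i X_i\bigr)$ then gives, for every admissible $\lambda \geq 0$, the bound $\prob\bigl(\sum_i a_i X_i \geq t\bigr) \leq \exp\bigl(-\lambda t + C\lambda^2 K^2 \norm{\veca}_2^2\bigr)$.

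Finally I would optimize the exponent over $\lambda$ in the allowed range $[0,\, c/(K\norm{\veca}_\infty)]$. The unconstrained minimizer is $\lambda^\ast = t/(2CK^2\norm{\veca}_2^2)$. If $\lambda^\ast$ lies in the admissible range, substituting it produces the sub-Gaussian tail $\exp\bigl(-c' t^2/(K^2\norm{\veca}_2^2)\bigr)$; otherwise the (upward) parabolic exponent is decreasing on the whole range, so the boundary choice $\lambda = c/(K\norm{\veca}_\infty)$ is optimal and yields the sub-exponential tail $\exp\bigl(-c'' t/(K\norm{\veca}_\infty)\bigr)$. These two regimes are distinguished exactly by which of the two arguments of the minimum is smaller, so combining them gives the stated bound with the $\min$. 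The only bookkeeping needed is to verify that every constant produced is absolute and independent of $N$ and $\veca$; as noted, the genuinely delicate part is the MGF estimate, where the moment-based definition of $\norm{\cdot}_{\psi_1}$ must be converted into analytic control of the exponential moment.
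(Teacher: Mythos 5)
Your proof is correct. Note, though, that the paper itself contains no proof of this proposition: it is imported verbatim as a known result from \cite[Proposition 5.16]{vershynin2012}, and your argument (one-sided reduction by symmetry, the MGF bound $\expec[e^{\lambda X}]\leq \exp(C\lambda^2K^2)$ for $\abs{\lambda}\leq c/K$ derived from the moment characterization of $\norm{\cdot}_{\psi_1}$, tensorization by independence, and Chernoff with optimization of $\lambda$ over the admissible range) is precisely the standard proof given in that cited reference, so there is nothing to reconcile against the paper beyond the citation.
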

\begin{proposition}\cite[Proposition 5.10]{vershynin2012} \label{prop:subgauss_conc}
Let $X_1,\dots,X_N$ be independent centered sub-Gaussian random variables, and $K = \max_i \norm{X_i}_{\psi_2}$.
Then for every $\veca = (a_1,\dots,a_N) \in \matR^N$ and every $t \geq 0$, we have
$$ \prob \left( \Bigl|\sum_i a_i X_i\Bigr| \geq t \right) \leq e \cdot \exp\left[- \frac{c t^2}{K^2\norm{\veca}_2^2}\right],$$
where $c > 0$ is an absolute constant.
\end{proposition}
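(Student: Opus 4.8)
The plan is to use the standard Laplace-transform (Chernoff) method. First I would reduce the sub-Gaussian moment hypothesis to a bound on the moment generating function: namely, that there is an absolute constant $C>0$ such that for every centered sub-Gaussian random variable $X$ and every $s\in\matR$,
\[
\expec[\exp(sX)]\le \exp\bigl(Cs^2\norm{X}_{\psi_2}^2\bigr).
\]
Establishing this is, in my view, the \emph{main obstacle}, because the hypothesis is phrased through the moment growth $(\expec|X|^p)^{1/p}\le \norm{X}_{\psi_2}\sqrt{p}$ rather than through the MGF directly. To get there I would expand $\exp(sX)$ as a power series, interchange expectation and summation (justified by absolute convergence), insert the moment bound $\expec|X|^p\le \norm{X}_{\psi_2}^p\, p^{p/2}$, and control the resulting series $\sum_{p\ge 0} \frac{|s|^p \norm{X}_{\psi_2}^p p^{p/2}}{p!}$ using $p!\ge (p/e)^p$, while using the centering $\expec X=0$ to kill the linear term. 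The surviving terms are dominated by a geometric-type series that sums to a Gaussian factor $\exp(Cs^2\norm{X}_{\psi_2}^2)$ once $|s|\norm{X}_{\psi_2}$ is absorbed.

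Second, I would tensorize using independence: writing $S=\sum_i a_iX_i$ and using $\norm{X_i}_{\psi_2}\le K$,
\[
\expec[\exp(sS)]=\prod_{i}\expec[\exp(sa_iX_i)]\le \prod_i \exp\bigl(Cs^2a_i^2\norm{X_i}_{\psi_2}^2\bigr)\le \exp\bigl(Cs^2K^2\norm{\veca}_2^2\bigr).
\]
Third, the Chernoff bound gives, for every $s>0$, the estimate $\prob(S\ge t)\le \exp\bigl(-st+Cs^2K^2\norm{\veca}_2^2\bigr)$; optimizing at $s=t/(2CK^2\norm{\veca}_2^2)$ yields $\prob(S\ge t)\le \exp\bigl(-t^2/(4CK^2\norm{\veca}_2^2)\bigr)$.

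Finally, I would run the identical argument for $-S$, whose summands $-a_iX_i$ have the same $\psi_2$-norms, and add the two one-sided tails to obtain the two-sided bound $\prob(|S|\ge t)\le 2\exp\bigl(-ct^2/(K^2\norm{\veca}_2^2)\bigr)$ with an absolute constant $c$ depending only on $C$. The displayed prefactor $e$ then absorbs the factor $2$ coming from the union bound (indeed $2\le e$) together with the slack in the constants, which completes the proof.
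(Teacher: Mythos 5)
The paper itself gives no proof of this proposition --- it is imported verbatim (with citation) from Vershynin's lecture notes --- so the comparison is really with the standard proof there. Your Chernoff/MGF route \emph{is} that standard proof in substance: Vershynin packages the tensorization step as ``rotation invariance'' of sub-Gaussian variables (the sum $\sum_i a_iX_i$ is sub-Gaussian with $\norm{\sum_i a_iX_i}_{\psi_2}^2 \leq C\sum_i a_i^2\norm{X_i}_{\psi_2}^2$, proved exactly by multiplying MGFs) and then reads off the tail from the equivalence of sub-Gaussian properties, where the prefactor $e$ arises naturally from the formulation $\prob(|X|>t)\le \exp(1-ct^2/\norm{X}_{\psi_2}^2)$; you instead optimize the Chernoff bound directly and absorb the two-sided factor $2\le e$. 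These are the same argument in different packaging.

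There is, however, one step that does not work as you wrote it, and it is precisely the step you flagged as the main obstacle. After inserting $\expec|X|^p\le \norm{X}_{\psi_2}^p p^{p/2}$ and $p!\ge (p/e)^p$, the $p$-th term of your series is $\bigl(e|s|\norm{X}_{\psi_2}/\sqrt{p}\bigr)^p$, and a ``geometric-type'' summation requires $e|s|\norm{X}_{\psi_2}/\sqrt{2}<1$, i.e. it only proves $\expec[\exp(sX)]\le\exp(Cs^2\norm{X}_{\psi_2}^2)$ for $|s|\norm{X}_{\psi_2}$ below an absolute constant. But your Chernoff optimization takes $s=t/(2CK^2\norm{\veca}_2^2)$, so the individual MGF arguments $s a_i$ satisfy $|sa_i|K\to\infty$ as $t\to\infty$: the large-$|s|K$ regime is genuinely needed for the stated Gaussian tail at all $t\ge 0$, and there your series control fails (already the $p=2$ term exceeds $1$). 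The gap is standard to repair, in either of two ways: (i) split into cases, and for $|s|\norm{X}_{\psi_2}$ large use Young's inequality $sX\le s^2\norm{X}_{\psi_2}^2/(2\lambda)+\lambda X^2/2$ with $\lambda = c/\norm{X}_{\psi_2}^2$ together with the bound $\expec[\exp(cX^2/\norm{X}_{\psi_2}^2)]\le e$ (itself a consequence of the same moment growth); or (ii) avoid the geometric comparison altogether and bound the series term-by-term against the Taylor expansion of $\exp(Cs^2\norm{X}_{\psi_2}^2)$, e.g. for even indices $p=2q$ one has $\bigl(u^2/(2q)\bigr)^q\le (u^2/2)^q/q!$ since $q!\le q^q$, with $u=e|s|\norm{X}_{\psi_2}$, and odd indices are folded in by $u^{2q+1}\le \tfrac12(u^{2q}+u^{2q+2})$. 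With the MGF bound established for all $s\in\matR$, the remaining steps --- tensorization over independence, optimization, the union bound for $\pm S$, and $2\le e$ --- are all correct.
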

%
%

Let $\noisevec = (\noise_1 \ \noise_2 \ \dots \ \noise_n)^T$, where $\noise_i \sim \calN(0,\sigma^2)$ are i.i.d. for each $i$.
The Proposition below is a standard concentration result, stating that $\norm{\noisevec}_2 = \Theta(\sigma \sqrt{n})$ and
$\norm{\noisevec}_1 = \Theta(\sigma n)$, with high probability. We provide proofs for completeness.
%
\begin{proposition} \label{prop:gaussian_norm_bd}
Let $\noisevec = (\noise_1 \ \noise_2 \ \dots \ \noise_n)^T$ where $\noise_i \sim \calN(0,\sigma^2)$ i.i.d for each $i$. 
Then, there exists constants $c_1, c_2 > 0$ so that for any $\epsilon \in (0,1)$, we have:
\begin{enumerate}
\item $\prob\left(\norm{\noisevec}_2 \in \left[(1-\epsilon)\sigma\sqrt{n}, (1+\epsilon)\sigma\sqrt{n} \ \right] \right) 
\geq 1-2\exp\left(- c_1 \epsilon^2 n \right)$, and \label{prop:gauss_l2norm_bd}

\item $\prob\left(\norm{\noisevec}_1 \in \left[(1-\epsilon) n \sigma \sqrt{\frac{2}{\pi}}, (1+\epsilon)n \sigma \sqrt{\frac{2}{\pi}}\right]\right) \geq 
1- e \cdot \exp\left(-\frac{2 c_2 \epsilon^2 n}{\pi}\right)$. \label{prop:gauss_l1norm_bd}
\end{enumerate}
\end{proposition}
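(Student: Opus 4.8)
The plan is to handle the two tail bounds separately, in each case writing the relevant norm as a sum of $n$ i.i.d. centered random variables and then applying the Bernstein-type deviation inequalities recalled in Propositions \ref{prop:subexp_conc} and \ref{prop:subgauss_conc} with the constant weight vector $\veca = (1,\dots,1)$. The only facts about Gaussians I would need are standard: that $\noise_i^2$ is sub-exponential and $\abs{\noise_i}$ is sub-Gaussian, with the respective $\psi_1$ and $\psi_2$ norms scaling like $\sigma^2$ and $\sigma$, and that $\expec\,\noise_i^2 = \sigma^2$ while $\expec\,\abs{\noise_i} = \sigma\sqrt{2/\pi}$ (the folded-normal mean).

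For part (\ref{prop:gauss_l2norm_bd}) I would first prove concentration of the \emph{squared} norm $\norm{\noisevec}_2^2 = \sum_{i=1}^n \noise_i^2$. The summands $X_i := \noise_i^2 - \sigma^2$ are centered and sub-exponential with $\norm{X_i}_{\psi_1} = K = \Theta(\sigma^2)$, so Proposition \ref{prop:subexp_conc} applied with $\norm{\veca}_2^2 = n$, $\norm{\veca}_\infty = 1$ and deviation level $t = \epsilon n \sigma^2$ yields
\begin{equation*}
\prob\bigl(\bigl|\norm{\noisevec}_2^2 - n\sigma^2\bigr| \geq \epsilon n \sigma^2\bigr) \leq 2\exp\Bigl[-c\min\Bigl\{\tfrac{\epsilon^2 n \sigma^4}{K^2}, \tfrac{\epsilon n \sigma^2}{K}\Bigr\}\Bigr].
\end{equation*}
Since $K = \Theta(\sigma^2)$ and $\epsilon^2 \leq \epsilon$ on $(0,1)$, the minimum is $\Theta(\epsilon^2 n)$, producing a bound of the form $2\exp(-c_1\epsilon^2 n)$. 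It then remains to pass from the squared norm back to the norm: because $(1-\epsilon)^2 \leq 1-\epsilon$ and $(1+\epsilon)^2 \geq 1+\epsilon$ for $\epsilon \in (0,1)$, the event $\{\bigl|\norm{\noisevec}_2^2 - n\sigma^2\bigr| \leq \epsilon n\sigma^2\}$ is contained in $\{\norm{\noisevec}_2 \in [(1-\epsilon)\sigma\sqrt n,\,(1+\epsilon)\sigma\sqrt n]\}$, which gives exactly the claimed statement.

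For part (\ref{prop:gauss_l1norm_bd}) I would write $\norm{\noisevec}_1 = \sum_{i=1}^n \abs{\noise_i}$ and center by the folded-normal mean, so that $Y_i := \abs{\noise_i} - \sigma\sqrt{2/\pi}$ are centered sub-Gaussian with $\norm{Y_i}_{\psi_2} = K' = \Theta(\sigma)$. Proposition \ref{prop:subgauss_conc} with $\norm{\veca}_2^2 = n$ and the natural deviation level $t = \epsilon n \sigma\sqrt{2/\pi}$ then gives
\begin{equation*}
\prob\Bigl(\bigl|\,\norm{\noisevec}_1 - n\sigma\sqrt{\tfrac{2}{\pi}}\,\bigr| \geq \epsilon n\sigma\sqrt{\tfrac{2}{\pi}}\Bigr) \leq e\cdot\exp\Bigl[-\tfrac{2c\,\epsilon^2 n \sigma^2}{\pi K'^2}\Bigr],
\end{equation*}
and with $K'^2 = \Theta(\sigma^2)$ the exponent becomes $-\tfrac{2c_2\epsilon^2 n}{\pi}$, as required; here no squaring step is needed since the $\ell_1$ norm is already a sum of the $\abs{\noise_i}$. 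There is no serious obstacle in this proof — it is a routine concentration computation — and the only steps demanding any care are the verification that $\noise_i^2$ and $\abs{\noise_i}$ have the stated Orlicz norms scaling correctly with $\sigma$, and the elementary interval inclusion used to convert the squared-norm estimate into the norm estimate in part (\ref{prop:gauss_l2norm_bd}).
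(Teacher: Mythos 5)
Your proof is correct and follows essentially the same route as the paper's: sub-exponential concentration (Proposition \ref{prop:subexp_conc}) for the centered squares $\noise_i^2 - \sigma^2$ in part (\ref{prop:gauss_l2norm_bd}), and sub-Gaussian concentration (Proposition \ref{prop:subgauss_conc}) for $\abs{\noise_i} - \sigma\sqrt{2/\pi}$ in part (\ref{prop:gauss_l1norm_bd}), with the same choices of deviation level $t$. Your explicit interval inclusion via $(1-\epsilon)^2 \leq 1-\epsilon \leq 1+\epsilon \leq (1+\epsilon)^2$ is precisely the ``standard manipulations'' step the paper leaves implicit.
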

%
%
\begin{proof}
\begin{enumerate}
\item Note that $\noise_i$ are i.i.d. sub-Gaussian random variables with $\norm{\noise_i}_{\psi_2} \leq C_1 \sigma$. 
Hence $\noise_i^2$ are i.i.d. sub-exponential\footnote{A random variable $X$ is sub-Gaussian iff $X^2$ is sub-exponential. 
Moreover, $\norm{X}_{\psi_2}^2 \leq \norm{X^2}_{\psi_1} \leq 2\norm{X}_{\psi_2}^2$ (cf. \cite[Lemma 5.14]{vershynin2012}).} with 
$$\norm{\noise_i^2}_{\psi_1} \leq 2 \norm{\noise_i}_{\psi_2}^2 \leq C_2 \sigma^2$$
for some constant $C_2 > 0$. This implies that $\noise_i^2 - \expec[\noise_i^2]$ are i.i.d. sub-exponential with 
$$\norm{\noise_i^2 - \expec[\noise_i^2]}_{\psi_1} \leq 2\norm{\noise_i^2}_{\psi_1} \leq C_3\sigma^2$$ 
for some constant $C_3 > 0$. Using Proposition \ref{prop:subexp_conc} with $t=n\epsilon\sigma^2$ for $0<\epsilon<1$,
we obtain for some constant $c_1 > 0$
$$\prob\left(\Bigl|\sum_{i=1}^{n} (\noise_i^2 - \expec[\noise_i^2])\Bigr| \leq n \epsilon \sigma^2 \right) \geq 
1-2\exp\left[-c_1 \min\set{\epsilon^2, \epsilon} n \right].$$
%
Together with some standard manipulations this completes the proof.

\item Note that $\abs{\noise_i} - \expec[\abs{\noise_i}]$ is sub-Gaussian with
$$\norm{\abs{\noise_i} - \expec[\abs{\noise_i}]}_{\psi_2} \leq 2 \norm{\noise_i}_{\psi_2} \leq C \sigma,$$
for some constant $C > 0$. Using Proposition \ref{prop:subgauss_conc} with $t = n \epsilon \expec[\abs{\noise_1}]$ for $\epsilon > 0$, 
we hence obtain
$$\prob\left(\Bigl|\sum_{i=1}^{n} (\abs{\noise_i} - \expec[\abs{\noise_i}])\Bigr| \leq n \epsilon \expec[\abs{\noise_1}] \right) 
\geq 1-e \cdot \exp\left[- \frac{c_2 n \epsilon^2 (\expec[\abs{\noise_1}])^2}{\sigma^2} \right]$$
for some constant $c_2 > 0$. Finally, observing that $\expec[\abs{\noise_1}] = \sigma \sqrt{\frac{2}{\pi}}$ (cf., \cite{winkel14}) 
completes the proof.
\end{enumerate}
\end{proof}

\section{Proof of Theorem \ref{thm:arbnoise_bilin_rec}} \label{app:proof_thm_sparse_bilin}
The proof of part ($1$) follows in the same manner as \cite[Proposition 1]{Chen15}, with minor 
differences in calculation at certain parts. For completeness, we outline the main steps below.

Invoking the Hanson-Wright inequality for quadratic forms \cite{Hanson1971}, we get for some constant $c > 0$ and all $t>0$
\begin{equation}\label{eq:Hanson-Wright1}
\prob(\abs{\sampsgnvec^T\matA \sampsgnvec - \expec[\sampsgnvec^T\matA \sampsgnvec]} > t) 
\leq 2\exp\biggl[-c \min \set{\frac{t^2}{K^4 \norm{\matA}_F^2}, \frac{t}{K^2\norm{\matA}}}\biggr],
\end{equation}
where $\norm{\sampsgn_i}_{\psi_2} \leq K$. For a \randvar random variable $\sampsgn_i$, $K = 1$ and
\begin{align*}
\expec[\sampsgnvec^T\matA\sampsgnvec]=\expec\sum_{i\not=j}\sampsgn_i\sampsgn_jA_{ij}=0.
\end{align*}
Using $\norm{\matA}\le\norm{\matA}_F$, \eqref{eq:Hanson-Wright1} implies for every $t>0$
\begin{equation}\label{eq:Hanson-Wright2}
\prob(\abs{\sampsgnvec^T\matA \sampsgnvec} > t) 
\leq 2\exp\biggl[-c \min \set{\frac{t^2}{\norm{\matA}_F^2}, \frac{t}{\norm{\matA}_F}}\biggr].
\end{equation}

We will now find upper and lower bounds on $\expec[\abs{\sampsgnvec^T\matA \sampsgnvec}]$. 
The upper bound is easy since \eqref{eq:Hanson-Wright2} implies
\begin{equation} \label{eq:up_bd_expec}
\expec[\abs{\sampsgnvec^T\matA \sampsgnvec}]=\int_0^\infty \prob(\abs{\sampsgnvec^T\matA \sampsgnvec}>t) dt\leq 
c'\norm{\matA}_F.
\end{equation}
In order to find the lower bound, we have via repeated application of Cauchy Schwartz inequality 
$$
\bigl(\expec[\abs{\sampsgnvec^T\matA \sampsgnvec}^2]\bigr)^2\le \expec[\abs{\sampsgnvec^T\matA \sampsgnvec}]\cdot\expec[\abs{\sampsgnvec^T\matA \sampsgnvec}^3]
\le \expec[\abs{\sampsgnvec^T\matA \sampsgnvec}]\cdot\bigl(\expec[\abs{\sampsgnvec^T\matA \sampsgnvec}^2]\bigr)^{1/2} \cdot
\bigl(\expec[\abs{\sampsgnvec^T\matA \sampsgnvec}^4]\bigr)^{1/2}
$$
and
$$\expec[\abs{\sampsgnvec^T\matA \sampsgnvec}] \geq 
\sqrt{\frac{(\expec[\abs{\sampsgnvec^T\matA \sampsgnvec}^2])^3}{\expec[\abs{\sampsgnvec^T\matA \sampsgnvec}^4]}}.$$
Since $\sampsgnvec$ consists of i.i.d. \randvar random variables, we obtain
$$
\expec[\abs{\sampsgnvec^T\matA \sampsgnvec}^2] = 2\norm{\matA}_F^2 = \norm{\veca}_2^2.
$$ 
Moreover, an argument similar to \eqref{eq:up_bd_expec} gives $\expec[\abs{\sampsgnvec^T\matA \sampsgnvec}^4]\le c''\norm{\matA}_F^4$.
Hence, 
\begin{equation} \label{eq:low_bd_expec}
    \expec[\abs{\sampsgnvec^T\matA \sampsgnvec}] \geq \sqrt{\frac{8\norm{\matA}_F^6}{c'' \norm{\matA}_F^4}} = \tilde c \norm{\matA}_F.
\end{equation}
Eqs. \eqref{eq:up_bd_expec}, \eqref{eq:low_bd_expec} give us upper and lower bounds for 
$\expec[\abs{\sampsgnvec^T\matA \sampsgnvec}]$. As a last step, we consider the 
zero mean random variables $X_1, \dots, X_n$, where 
$X_i = \abs{\sampsgnvec_i^T\matA \sampsgnvec_i} - \expec{\abs{\sampsgnvec_i^T\matA \sampsgnvec_i}}$.
A simple modification of \eqref{eq:up_bd_expec} shows that they are sub-exponential with $\norm{X_i}_{\psi_1}\le c\norm{\matA}_F.$
We can therefore apply a standard concentration bound (see \cite[Proposition 5.16]{vershynin2012}) to bound the deviation
$$
\biggl|\frac{1}{n}\norm{\matB\veca}_1 - \frac{1}{n}\expec[\norm{\matB\veca}_1]\biggr|=\frac{1}{n}\biggl|\sum_{i=1}^nX_i\biggr|.
$$
A straightforward  calculation then yields the statement of part (1) of the Theorem.

Part (2) follows from standard arguments 
based on $\epsilon$-nets, detailed for instance in \cite{Baraniuk2008_simple}.

The proof of part (3) copies that of \cite[Theorem 3]{Chen15}, which again is inspired by \cite{Candes08restr}. We sketch the 
main steps below for completeness. Denoting $\est{\veca} = \veca + \vech$, we have by feasibility of $\veca$ that
$\frac{1}{n}\norm{\matB\vech}_1 \leq \frac{2\arbnoisebd}{n}$. Denoting $\Omega_0$ to be set of indices corresponding to 
the $\totsparsity$ largest entries of $\veca$, we get ${\veca} = \veca_{\Omega_0} + \veca_{\Omega_0^c}$. For a suitable positive integer 
$K$, we define $\Omega_1$ as the set of indices of $K$ largest entries of $\vech_{\Omega_0^{c}}$, $\Omega_2$ as the set of indices of $K$ largest entries of 
$\vech$ on $(\Omega_0 \cup \Omega_1)^{c}$ and so on. Following this argument of \cite{Candes08restr} gives the proof.
%
%
%
%
%
%
%
%

The proof of part (4) follows by choosing $K = 4 \bigl(\frac{4 c_2}{c_1}\bigr)^2\totsparsity$. Indeed, \eqref{eq:gammas} gives
$$
\frac{1-\riptolb_{\totsparsity+K}}{\sqrt{2}} - (1+\riptoub_{K})\sqrt{\frac{k}{K}}\ge \frac{c_1}{2\sqrt{2}}-2c_2\cdot\frac{c_1}{8c_2}=\frac{(\sqrt{2}-1)c_1}{4}=\beta>0
$$
if $n>c_3'(k+K)\log(d^2/(k+K))$, 
with probability at least $1-e^{-C_4 n}$ for some constant $C_4 > 0$.

\end{document}